 \newcommand{\C}{\mathbb{C}}
\newcommand{\V}{\mathcal{V}}
\newcommand{\M}{\mathcal{M}}
\newcommand{\Eq}{\mathrm{Eq}}
\renewcommand{\lim}{\mathrm{lim}}
  \newcommand{\CPos}{\mathsf{CPos}} \newcommand{\Pos}{\mathsf{Pos}}
\newcommand{\Lat}{\mathsf{Lat}}
\newcommand{\Sub}{\mathsf{Sub}}
\newcommand{\Mon}{\mathsf{Mon}}
\newcommand{\op}{\mathrm{op}}
\renewcommand{\:}{\colon}
\newcommand{\ExtFirst}{\ensuremath{\mathsf{E1}}}
\newcommand{\ExtSecond}{\ensuremath{\mathsf{E2}}}
\newcommand{\ExtFirstop}{\ensuremath{\mathsf{C1}}}
\newcommand{\ExtSecondop}{\ensuremath{\mathsf{C2}}}
\newcommand{\pol}{\ar@{}[ur]|<<{\text{\pigpenfont G}}}
\newcommand{\pbl}{\ar@{}[dr]|<<{\text{\pigpenfont A}}}
\newcommand{\por}{\ar@{}[ul]|<<{\text{\pigpenfont I}}}
\newcommand{\pbr}{\ar@{}[dl]|<<{\text{\pigpenfont C}}}
\newtheorem{theorem}{Theorem}[section]
\newtheorem{proposition}[theorem]{Proposition}
\newtheorem{corollary}[theorem]{Corollary}
\newtheorem{lemma}[theorem]{Lemma}
  \theoremstyle{definition}
\newtheorem{definition}[theorem]{Definition}
\newtheorem{remark}[theorem]{Remark}
\newtheorem{notation}[theorem]{Notation}
\title{On extensivity of morphisms }
 \author[1,2]{Michael Hoefnagel}
 \author[1,2]{Emma Theart}
 \affil[1]{\small{\textit{Mathematics Division, Department of Mathematical Sciences, Stellenbosch University, Private Bag X1 Matieland 7602, South Africa}}}
  \affil[2]{\small{\textit{National Institute
for Theoretical and Computational Sciences (NITheCS), South Africa}}}
\date{}
\begin{document}

\maketitle

\begin{abstract}
Extensivity of a category \cite{Carboni} may be described as a property of coproducts in the category, namely, that they are disjoint and universal. An alternative viewpoint is that it is a property of morphisms in a category. This paper explores this point of view through a natural notion of extensive and coextensive morphism. Through these notions, topics in universal algebra, such as the strict refinement and Fraser-Horn properties, take categorical form and thereby enjoy the benefits of categorical generalisation. On the other hand, the universal algebraic theory surrounding these topics inspire categorical results. One such result we establish in this paper is that a Barr-exact category $\C$ is coextensive if and only if every split monomorphism in $\C$ coextensive. 
\end{abstract}
\noindent
{\small \textbf{MSC2020}:} 18B50, 18A20, 18A30, 08B25 \\
\textbf{keywords}: Extensive category, extensive morphism, strict refinement property, Fraser-Horn property 

\section{Introduction} \label{section: introduction}
An \emph{extensive category} \cite{Carboni} is, informally speaking, one in which finite coproducts (sums) exist and behave in a manner similar to disjoint unions of sets. Formally, given a category $\C$ with finite coproducts, then $\C$ is extensive if the canonical functor
\[
(\C \downarrow X_1) \times (\C \downarrow X_2) \xrightarrow{+} (\C \downarrow (X_1 + X_2))
\]
is an equivalence for any objects $X_1$ and $X_2$. Equivalently, extensivity may be formulated internally: a category $\C$ with finite coproducts is extensive if $\C$ admits all pullbacks along coproduct injections, and for any diagram
\begin{equation} \label{diagram: intro}
\xymatrix{
      A_1 \ar[r] \ar[d] &  A \ar[d]^f & A_2 \ar[l] \ar[d] \\
      X_1 \ar[r] & X & X_2 \ar[l]
}
\end{equation}
where the bottom row is a coproduct, the top row is a coproduct if and only if the squares are pullbacks (see Proposition~2.2 in \cite{Carboni}). These two formulations suggest two distinct points of view on the nature of extensivity: the first emphasises it as a property of coproducts, while the second as a property of morphisms. This paper focuses on the second viewpoint and, to that end, defines a morphism $f: A \to X$ in a category $\C$ with finite coproducts to be \emph{extensive} if every coproduct injection into $X$ admits a pullback along $f$, and $f$ satisfies the diagrammatic property described above for any diagram such as \eqref{diagram: intro}. A category with finite coproducts is then extensive if and only if every morphism is extensive.

The prototypical example of an extensive category is the category of sets, yet the category of pointed sets is not extensive. For instance, if $f\:A \to X$ is an extensive morphism in a pointed category, the left-hand square in 
\[\xymatrix{
      0 \ar[r] \ar[d] &  A\ar[d]^f & A  \ar[l]\ar[d] \\
      0  \ar[r] & X  & X \ar[l]}\]
being a pullback forces $\ker(f) = 0$, since both the top and bottom rows are coproducts. This illustrates that not every morphism of pointed sets is extensive. In fact, in the category of pointed sets, the morphisms $f$ with $\ker(f) = 0$ are precisely the extensive morphisms (see Proposition~73 in \cite{TheartMSc}). Dually, the category of groups is not coextensive, however, every product projection $p: G \to X$ where $G$ is a \emph{centerless} (or \emph{perfect}) group is coextensive. As will be shown in Section~\ref{section: coextensivity of product projections}, this is a categorical formulation of the fact that centerless groups have the \emph{strict refinement property} in the sense of \cite{ChangJonssonTarski1964}. In the category $\Lat$ of lattices (and lattice homomorphisms), every surjective homomorphism is coextensive, which may be seen as a categorical formulation of the fact that lattices have the \emph{Fraser-Horn} property \cite{FraserHorn1970}. 

Extensivity of a morphism is the conjunction of two separate properties, namely, the properties ($\ExtFirst$) and ($\ExtSecond$) described in section~\ref{section: general results}. We note here that for a morphism $f$ to satisfy the condition ($\ExtFirst$) is equivalent to the condition that $f$ be \emph{crisp} in the sense of \cite{PredicateTransformer} (see Definition 3 therein). Moreover, a category $\C$ with finite coproducts is a \emph{Boolean category} in sense of \cite{PredicateTransformer} if and only if every coproduct inclusion is extensive, and coproduct inclusions in $\C$ are pullback stable. 

 There is an interesting interplay between the universal algebraic and the categorical. On the one hand, the theory of coextensive morphisms allows for generalisations of universal algebraic topics to the categorical level. For instance, a variety $\V$ has the strict refinement property if and only if every product projection in $\V$ is coextensive. Expressing these algebraic concepts categorically yields natural consequences motivated  from the theory of (co)extensive categories. On the other hand, the framework of universal algebra surrounding these algebraic properties suggest corresponding categorical results. For instance,  Theorem~\ref{proposition: Barr-exact coextensive} shows that every Barr-exact \cite{BarrGrilletOsdol1971} category $\C$ with global support is coextensive if and only if every split monomorphism is coextensive.

\section{General results} \label{section: general results}
Let us restate the definition of extensive morphism from the introduction. For any category $\C$, we will be concerned with two conditions on a morphism $f\:A \to X$ in $\C$:
\begin{enumerate}
    \item[(\ExtFirst)] $f$ admits pullbacks along the injections of any coproduct diagram $$X_1 \rightarrow X \leftarrow X_2$$
   and the resulting two pullbacks 
   \begin{equation}
    \label{diagram: extensive morphism definition}
      \xymatrix{A_1 \ar[r] \ar[d] &  A\ar[d]^{f} & A_2  \ar[l]\ar[d] \\
      X_1  \ar[r] & X  & X_2 \ar[l]}
    \end{equation}
    form a coproduct diagram  $A_1 \rightarrow A \leftarrow A_2$.
    
    \item[(\ExtSecond)] for any commutative diagram
    \[\xymatrix{
      A_1 \ar[r] \ar[d] &  A\ar[d]^f & A_2  \ar[l]\ar[d] \\
      X_1  \ar[r] & X  & X_2 \ar[l]}\]
    in $\C$ where the bottom row is a coproduct diagram, if the top row is a coproduct diagram, then both squares are pullbacks.
\end{enumerate}
Then we may restate the main definition of this paper. 
\begin{definition} \label{definition: extensive morphism}
A morphism $f\:A\rightarrow X$ in a category $\C$ said to be \emph{extensive} in $\C$ if it satisfies $(\ExtFirst)$ and $(\ExtSecond)$. Dually, $f$ is called \emph{coextensive} in $\C$ if it is extensive in $\C^{\op}$. 
\end{definition}
In what follows we will refer to the category-theoretic duals $(\ExtFirstop)$ and $(\ExtSecondop)$ of the conditions $(\ExtFirst)$ and $(\ExtSecond)$, respectively. Thus, the morphism $f$ is coextensive if and only if $f$ satisfies both of the following:
\begin{enumerate}
    \item[(\ExtFirstop)] $f$ admits pushouts along the productions of any product diagram $$A_1 \leftarrow A \rightarrow A_2$$
   and the resulting two pushouts 
   \begin{equation}
    \label{diagram: extensive morphism definition}
      \xymatrix{A_1  \ar[d] &  A \ar[d]^{f} \ar[r] \ar[l] & A_2 \ar[d] \\
      X_1  & X \ar[r] \ar[l] & X_2 }
    \end{equation}
    form a product diagram $X_1 \leftarrow X \rightarrow X_2$.
    
    \item[(\ExtSecondop)] for any commutative diagram
    \[\xymatrix{A_1  \ar[d] &  A \ar[d]^{f} \ar[r] \ar[l] & A_2 \ar[d] \\
      X_1  & X \ar[r] \ar[l] & X_2 }\]
    in $\C$ where the top row is a product diagram, if the bottom row is a product diagram, then both squares are pushouts.
\end{enumerate}

\begin{proposition}
	\label{proposition: composite of extensive}
	In any category $\C$, the composite of two extensive morphisms is extensive.
\end{proposition}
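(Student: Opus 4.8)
The plan is to show that if $f\: A \to X$ and $g\: X \to Y$ are extensive, then their composite $gf\: A \to Y$ satisfies both $(\ExtFirst)$ and $(\ExtSecond)$, in each case reducing the claim for $gf$ to the corresponding properties of $f$ and $g$ together with the pasting lemma for pullbacks. Throughout, given a coproduct diagram $Y_1 \to Y \leftarrow Y_2$ I write $\iota_1, \iota_2$ for its injections.

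For $(\ExtFirst)$ I would start from an arbitrary coproduct diagram $Y_1 \to Y \leftarrow Y_2$ and build the required pullbacks in two stages. First, applying $(\ExtFirst)$ for $g$ yields pullbacks $X_i$ of $\iota_i$ along $g$ and makes $X_1 \to X \leftarrow X_2$ a coproduct diagram. Since this is a coproduct, $(\ExtFirst)$ for $f$ then applies: pulling back the injections $X_i \to X$ along $f$ produces objects $A_i$ with $A_1 \to A \leftarrow A_2$ a coproduct diagram. By the pasting lemma each $A_i$ is simultaneously the pullback of $\iota_i$ along $gf$, so these pullbacks exist and their domains form a coproduct diagram; this is exactly $(\ExtFirst)$ for $gf$.

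The substantive part is $(\ExtSecond)$. Here I would be given a commutative diagram whose bottom row is a coproduct $Y_1 \to Y \leftarrow Y_2$, whose top row is a coproduct $A_1 \to A \leftarrow A_2$, and whose middle vertical is $gf$, and I must show both squares are pullbacks. The idea is to interpolate the intermediate layer $X_1 \to X \leftarrow X_2$ obtained from $(\ExtFirst)$ for $g$, where $X_i$ is the pullback of $\iota_i$ along $g$. Using the commutativity of the given diagram, the cone consisting of the composite $A_i \to A \xrightarrow{f} X$ and the vertical map $A_i \to Y_i$ satisfies the compatibility condition for the pullback $X_i = Y_i \times_Y X$, and hence factors through it, producing comparison maps $A_i \to X_i$. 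This exhibits a commutative diagram over $f$ whose top row $A_1 \to A \leftarrow A_2$ and bottom row $X_1 \to X \leftarrow X_2$ are both coproducts, so $(\ExtSecond)$ for $f$ forces its two squares to be pullbacks. Since the squares of the $X$-over-$Y$ layer are pullbacks by construction, pasting them with the $f$-squares shows that the two squares of the original diagram over $gf$ are pullbacks, giving $(\ExtSecond)$ for $gf$.

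I expect the main obstacle to be the bookkeeping in the $(\ExtSecond)$ step: constructing the comparison maps $A_i \to X_i$ through the universal property of the pullbacks $X_i = Y_i \times_Y X$ and checking that every resulting triangle commutes, so that both the hypotheses of $(\ExtSecond)$ for $f$ and the pasting lemma are genuinely applicable. Once these compatibilities are verified, both parts follow purely formally.
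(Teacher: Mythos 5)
Your proposal is correct and follows essentially the same route as the paper's proof: for $(\ExtFirst)$ you pull back in two stages and paste, and for $(\ExtSecond)$ you interpolate the middle row via the pullbacks of $g$, obtain the comparison maps from the pullback universal property, apply $(\ExtSecond)$ for $f$ to the two coproduct rows, and conclude by pullback pasting. The only difference is that you spell out the construction of the comparison maps $A_i \to X_i$, which the paper leaves implicit as ``induced'' dotted morphisms.
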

\begin{proof}
	Let $f\colon X \rightarrow Y$ and $g\colon Y \rightarrow Z$ be extensive morphisms in $\C$. Suppose we have a coproduct diagram $Z_1 \xrightarrow{z_1} Z \xleftarrow{z_2} Z_2$.
    By extensivity of $g$, $g$ admits pullbacks along $z_1$ and $z_2$, and these pullback squares together form a coproduct diagram $Y_1 \xrightarrow{y_1} Y \xleftarrow{y_2} Y_2$ in the top row. Since $f$ is extensive, it in turn admits pullbacks along $y_1$ and $y_2$. These pullback diagrams are shown in (\ref{diag-composite1}). 
	\begin{equation} \label{diag-composite1}
		\xymatrix{
			X_1 \pbl \ar[r]^{x_1} \ar[d] &  X \ar[d]^{f} & X_2 \pbr  \ar[l]_{x_2} \ar[d] \\
			Y_1 \pbl \ar[r]^{y_1} \ar[d] &  Y \ar[d]^{g} & Y_2 \pbr \ar[l]_{y_2}\ar[d] \\
			Z_1 \ar[r]_{z_1} & Z & \ar[l]^{z_2} Z_2}
	\end{equation}
	By pullback pasting, it follows that these composite squares in (\ref{diag-composite1}) form pullback diagrams of $gf$ along $z_1$ and $z_2$, where the top row is a coproduct. Thus $gf$ satisfies $(\ExtFirst)$. Next, suppose we have a commutative diagram
	\begin{equation} \label{diag-composite2}
		\xymatrix{ X_1 \ar[r]^{x_1} \ar[d]_{j_1} & X \ar[d]_{g  f} & \ar[l]_{x_2} X_2 \ar[d]^{j_2} \\
			Z_1 \ar[r]_{z_1} & Z & \ar[l]^{z_2} Z_2}
	\end{equation}
	in $\C$ where the top and bottom rows are coproduct diagrams. We form the pullbacks of $g$ along $z_1$ and $z_2$, and these pullbacks induce the dotted morphisms in (\ref{diag-composite3}). Since the top and middle rows of (\ref{diag-composite3}) are coproduct diagrams, it follows by the extensivity of $f$ that the top two squares are pullbacks. By pullback pasting we conclude that (\ref{diag-composite2}) consists of a pair of pullback squares. Hence, $gf$ satisfies $(\ExtSecond)$.

	\begin{equation} \label{diag-composite3}
		\xymatrix{
			X_1  \ar@{}[dr]  \ar@/_1pc/[dd]_{j_1}  \ar[r]^{x_1} \ar@{.>}[d] &  X \ar[d]^{f} & X_2  \ar@{}[dl]\ar@/^1pc/[dd]^{j_2}   \ar[l]_{x_2} \ar@{.>}[d] \\
			Y_1 \pbl \ar[r]^{p_1} \ar[d] &  Y \ar[d]^{g} & Y_2 \pbr \ar[l]_{p_2}\ar[d] \\
			Z_1 \ar[r]_{z_1} & Z & \ar[l]^{z_2} Z_2}
	\end{equation}
\end{proof}
\begin{corollary} \label{corollary: C extensive iff split epis and inclusions are}
	If $\C$ is a category with binary coproducts, then $\C$ is extensive if and only if every split epimorphism and every coproduct inclusion is extensive. 
\end{corollary}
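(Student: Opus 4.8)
The plan is to combine two facts already available: first, the observation from the introduction that a category with finite coproducts is extensive precisely when every one of its morphisms is extensive; and second, Proposition~\ref{proposition: composite of extensive}, which tells us that extensive morphisms are closed under composition.

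The forward implication is immediate. If $\C$ is extensive, then every morphism of $\C$ is extensive, so in particular every split epimorphism and every coproduct inclusion is extensive.

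For the converse, I would show that each morphism factors as a coproduct inclusion followed by a split epimorphism, and then invoke closure under composition. Given an arbitrary $f\colon A \to X$, consider the factorisation
\[
A \xrightarrow{\ \iota_1\ } A + X \xrightarrow{\ [f,\,1_X]\ } X,
\]
where $\iota_1$ is the first coproduct inclusion and $[f,\,1_X]$ is the copairing of $f$ with the identity on $X$. One checks directly that $[f,\,1_X]\circ \iota_1 = f$, so this genuinely factors $f$. Moreover $[f,\,1_X]$ is a split epimorphism: the second coproduct inclusion $\iota_2\colon X \to A + X$ satisfies $[f,\,1_X]\circ \iota_2 = 1_X$ and is therefore a section. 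By hypothesis $\iota_1$ is extensive (being a coproduct inclusion) and $[f,\,1_X]$ is extensive (being a split epimorphism), so Proposition~\ref{proposition: composite of extensive} shows their composite $f$ is extensive. As $f$ was arbitrary, every morphism of $\C$ is extensive, and hence $\C$ is extensive.

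The only real content is spotting the factorisation $f = [f,\,1_X]\circ \iota_1$ through $A + X$; the key point is that copairing a morphism with an identity always yields a split epimorphism whose section is the complementary coproduct inclusion. I do not anticipate any substantive obstacle beyond this observation, since both the closure property and the characterisation of extensivity in terms of all morphisms being extensive are already established.
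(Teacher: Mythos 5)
Your proof is correct and is essentially identical to the paper's: the paper also factors an arbitrary $f\:X \to Y$ as the coproduct inclusion $\iota_1\:X \to X+Y$ followed by the split epimorphism $\langle f, 1_Y\rangle$, and then applies Proposition~\ref{proposition: composite of extensive}. You have merely spelled out the details (the splitting by $\iota_2$ and the trivial forward direction) that the paper leaves implicit.
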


\begin{proof}
	This follows from the fact that every morphism $f\:X \to Y$ in such a category $\C$ factors as 
	$\xymatrix{
		X \ar[r]_-{\iota_1} & X + Y \ar[r]_-{\langle f, 1_Y \rangle} & Y
	}$
 where ${\langle f, 1_Y \rangle}$ is morphism induced by $1_Y$ and $f$. 
	
\end{proof}

\begin{lemma}
  \label{lemma: if the squares exist}
    For any  morphisms $f\colon X \rightarrow Y$ and $g\colon Y \to Z$ in any category $\C$, if $gf$ is extensive and for each coproduct diagram $Y_1 \xrightarrow{y_1} Y \xleftarrow{y_2} Y_2$
    there exists a pair of pullback squares
    \begin{equation} 
      \label{diagram: the square that exists}
      \xymatrix{Y_1 \ar[r]^{y_1} \ar[d]_{g_1}& Y \ar[d]^{g}& Y_2 \ar[l]_{y_2} \ar[d]^{g_2} \\
      Z_1 \ar[r]_{z_1}& Z & \ar[l]^{z_2} Z_2}
    \end{equation}
    where the top and bottom rows are coproduct diagrams, then $f$ is extensive.
\end{lemma}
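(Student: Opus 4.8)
The plan is to establish $(\ExtFirst)$ and $(\ExtSecond)$ for $f$ one at a time, in each case relating the $f$-squares to the corresponding squares for the extensive morphism $gf$ by pasting them on top of the pullback squares for $g$ supplied by the hypothesis. The hypothesised squares in \eqref{diagram: the square that exists} are exactly the bridge: they exhibit each injection $y_i\colon Y_i \to Y$ as the pullback of a coproduct injection $z_i\colon Z_i \to Z$ along $g$, and they produce the coproduct $Z_1 \xrightarrow{z_1} Z \xleftarrow{z_2} Z_2$ that can be fed into the extensivity of $gf$.

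For $(\ExtFirst)$, I would fix a coproduct diagram $Y_1 \xrightarrow{y_1} Y \xleftarrow{y_2} Y_2$ and invoke the hypothesis to obtain the pullback squares of $g$ with a coproduct $Z_1 \xrightarrow{z_1} Z \xleftarrow{z_2} Z_2$ along the bottom. Since $gf$ is extensive, $(\ExtFirst)$ applied to this coproduct yields pullbacks of $gf$ along $z_1$ and $z_2$ whose top row $X_1 \xrightarrow{x_1} X \xleftarrow{x_2} X_2$ is again a coproduct. Now, for each $i$, the vertical rectangle with corners $X_i, X, Z_i, Z$ is the pullback of $gf$ along $z_i$, and its lower half is the hypothesised pullback of $g$ along $z_i$; the cancellation form of the pullback pasting lemma then forces the upper half, the square with corners $X_i, X, Y_i, Y$, to be a pullback of $f$ along $y_i$. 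Together with the fact that $X_1 \to X \leftarrow X_2$ is a coproduct, this is precisely $(\ExtFirst)$ for $f$.

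For $(\ExtSecond)$, I would take a commutative diagram with $f$ in the middle column whose bottom row $Y_1 \xrightarrow{y_1} Y \xleftarrow{y_2} Y_2$ and top row $X_1 \to X \leftarrow X_2$ are both coproducts, and append the hypothesised $g$-squares beneath it to form a two-storey diagram whose outer rows are the coproducts $X_1 \to X \leftarrow X_2$ and $Z_1 \to Z \leftarrow Z_2$ and whose middle column is $gf$. Applying $(\ExtSecond)$ for $gf$ shows that the two outer rectangles are pullbacks; since the lower squares are pullbacks by hypothesis, pasting (cancellation) again forces the upper $f$-squares to be pullbacks, giving $(\ExtSecond)$ for $f$.

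I do not expect a deep obstacle here: the entire argument is two applications of pullback pasting. The step requiring care is the direction in which the pasting lemma is applied; throughout one uses the cancellation form, namely that if an outer rectangle and its lower square are pullbacks then the upper square is a pullback, which is valid precisely because the hypothesis guarantees the lower squares are pullbacks. The only genuinely load-bearing input beyond formal pasting is the hypothesis itself, which was tailored to deliver both the lower pullback squares for $g$ and the coproduct $Z_1 \to Z \leftarrow Z_2$ along the base needed to invoke the extensivity of $gf$.
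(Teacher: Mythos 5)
Your proof is correct and follows essentially the same route as the paper's: for $(\ExtFirst)$ you use the hypothesised $g$-squares to pass to a coproduct over $Z$, pull $gf$ back along $z_1,z_2$, and apply the cancellation form of pullback pasting, and for $(\ExtSecond)$ you append the hypothesised squares below the given diagram, invoke $(\ExtSecond)$ for $gf$, and cancel again. The only detail left tacit in your write-up (made explicit in the paper via dotted arrows) is that the comparison morphisms $X_i \to Y_i$, needed to split each tall rectangle into two commutative squares before cancelling, are induced by the universal property of the hypothesised pullbacks of $g$.
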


\begin{proof}
  We show that $f$ satisfies $(\ExtFirst)$. Let $Y_1 \xrightarrow{y_1} Y \xleftarrow{y_2} Y_2$ be any coproduct diagram. Then, the squares in (\ref{diagram: the square that exists}) exist by assumption, and therefore, because $gf$ is extensive, we may pull $gf$ back along $z_1$ and $z_2$ to form the diagram of solid arrows below
  \[\xymatrix{X_1 \ar@{.>}[d] \ar[r] \ar@/_2em/[dd] & \ar[d]^{f} X & X_2 \ar@/^2em/[dd] \ar[l] \ar@{.>}[d]\\ 
    Y_1 \ar[r]^{y_1} \ar[d]_{g_1}& Y \ar[d]^{g}& Y_2 \ar[l]_{y_2} \ar[d]^{g_2} \\
    Z \ar[r]_{z_1}& Z & \ar[l]^{z_2} Z_2}\]
  where the dotted arrows are the morphisms induced by these pullbacks, and the top row is a coproduct diagram by extensivity of $gf$. It then follows that, since the outer and bottom squares are pullbacks, the top two squares are also pullbacks. Hence, $f$ satisfies $(\ExtFirst)$.

  To show that $f$ satisfies $(\ExtSecond)$, suppose we have a commutative diagram
  \[\xymatrix{
    X_1 \ar[d]_{f_1} \ar[r]^{x_1} & X \ar[d]^f & \ar[l]_{x_2} X_2 \ar[d]^{f_2} \\
    Y_1\ar[r]_{y_1} & Y & Y_2 \ar[l]^{y_2}}\]
  where the top and bottom rows are coproducts. Then, since $gf$ is extensive, it follows that the outer squares in
  \[\xymatrix{
    X_1 \ar[d]_{f_1} \ar[r]^{x_1} & X \ar[d]^f & \ar[l]_{x_2} X_2 \ar[d]^{f_2}\\
    Y_1 \ar[r]^{y_1} \ar[d]_{g_1}& Y \ar[d]^{g}& Y_2 \ar[l]_{y_2} \ar[d]^{g_2} \\
    Z \ar[r]_{z_1}& Z & \ar[l]^{z_2} Z_2} \]
  are pullbacks. Hence, so are the top two squares. Therefore, $f$ is extensive.
\end{proof}
\begin{lemma}
  In any category $\C$, if $\iota\colon Y \to Z$ is a coproduct inclusion which satisfies $(\ExtSecond)$, then for any coproduct diagram $Y_1 \xrightarrow{y_1} Y \xleftarrow{y_2} Y_2$,
  there exists a pair of pullback squares
    \begin{equation} 
      \xymatrix{Y_1 \ar[r]^{y_1} \ar[d]_{\iota_1}& Y \ar[d]^{\iota}& Y_2 \ar[l]_{y_2} \ar[d]^{\iota_2} \\
      Z_1 \ar[r]_{z_1}& Z & \ar[l]^{z_2} Z_2}
    \end{equation}
    where the top and bottom rows are coproduct diagrams.
\end{lemma}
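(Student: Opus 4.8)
The plan is to read off the conclusion directly from condition $(\ExtSecond)$: once I exhibit a commutative diagram of the stated shape in which \emph{both} rows are coproduct diagrams, $(\ExtSecond)$ applied to $\iota$ forces both squares to be pullbacks. So the whole task reduces to manufacturing a coproduct decomposition $Z_1 \xrightarrow{z_1} Z \xleftarrow{z_2} Z_2$ together with vertical maps $\iota_1\colon Y_1 \to Z_1$ and $\iota_2 \colon Y_2 \to Z_2$ making the diagram commute; no pullback need be produced by hand.

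To build the bottom row I would use the hypothesis that $\iota$ is a coproduct inclusion. Fix an object $Y'$ and a morphism $\iota'\colon Y' \to Z$ so that $Y \xrightarrow{\iota} Z \xleftarrow{\iota'} Y'$ is a coproduct diagram, whence $Z$ is the threefold coproduct of $Y_1, Y_2, Y'$ along $\iota y_1, \iota y_2, \iota'$. I then regroup this decomposition by setting $Z_1 := Y_1 + Y'$ and $Z_2 := Y_2$, with $z_1 := [\iota y_1, \iota'] \colon Z_1 \to Z$ the copairing of $\iota y_1$ and $\iota'$, and $z_2 := \iota y_2 \colon Z_2 \to Z$. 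For the vertical maps I take $\iota_1 \colon Y_1 \to Y_1 + Y' = Z_1$ to be the first coproduct injection and $\iota_2 := 1_{Y_2}$.

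It then remains to check two things. First, that $Z_1 \xrightarrow{z_1} Z \xleftarrow{z_2} Z_2$ is genuinely a coproduct diagram: the comparison $[z_1, z_2]\colon (Y_1 + Y') + Y_2 \to Z$ and the canonical arrow arising from the grouping $Z = (Y_1 + Y_2) + Y'$ send the three summands $Y_1, Y_2, Y'$ to the same maps $\iota y_1, \iota y_2, \iota'$, so by the universal property of the threefold coproduct $[z_1, z_2]$ is an isomorphism. Second, that the diagram commutes: on the left both legs equal $\iota y_1$ (since $z_1 \iota_1 = \iota y_1$ by definition of $z_1$), and on the right both legs equal $\iota y_2$ (since $\iota_2 = 1_{Y_2}$ and $z_2 = \iota y_2$). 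With both rows now coproduct diagrams, $(\ExtSecond)$ applied to $\iota$ delivers the two pullback squares, which is exactly the assertion.

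I expect the only real friction to lie in the coproduct bookkeeping of the previous paragraph — in particular the existence of the regrouped object $Z_1 = Y_1 + Y'$, which is where the ambient assumption of binary coproducts enters, and the verification that $z_1, z_2$ exhibit $Z$ as their coproduct. The asymmetric choice of attaching the complement $Y'$ to $Z_1$ rather than to $Z_2$ is immaterial; either produces a valid diagram. Everything downstream of producing the commuting diagram with two coproduct rows is handled for free by $(\ExtSecond)$, so no direct pullback computation is required.
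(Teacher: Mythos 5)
Your proof is correct and is essentially the paper's own argument: both use the complementary inclusion $\iota'\colon Y' \to Z$ to regroup the threefold decomposition $Z = Y_1 + Y_2 + Y'$ into a binary coproduct diagram with $\iota$ as one of the vertical maps, then invoke $(\ExtSecond)$ to get the pullbacks for free. The only difference is cosmetic: the paper attaches $Y'$ to the $Y_2$ side (taking $Z_1 = Y_1$, $Z_2 = Y_2 + Y'$) whereas you attach it to the $Y_1$ side, which, as you note, is immaterial by symmetry.
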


\begin{proof}
   Since $\iota$ is a coproduct inclusion, it has a complementary inclusion $\iota':Y' \to Z$  making $Y \xrightarrow{\iota} Z \xleftarrow{\iota'} Y'$
  a coproduct diagram. We may then form the coproduct diagram 
  \begin{equation*} 
    \xymatrix{Y_2 \ar[r]^-{j_1} & Y_2 + Y' & \ar[l]_-{j_2}  Y'}.
  \end{equation*}
  Consider the commutative diagram:
  \begin{equation}
    \label{diagram: inclusions}
    \xymatrix{ 
    Y_1 \ar[r]^{y_1} \ar[d]_{1_{Y_1}} &  Y \ar[d]_{\iota} & Y_2  \ar[l]_{y_2}\ar[d]^{j_1} \\
    Y_1  \ar[r]_-{\iota y_1} & Z  & Y_2 + Y' \ar[l]^-{\langle \iota y_2, \iota'\rangle }
    }
  \end{equation}
  It is routine to verify that the bottom row is a coproduct, and hence since $\iota$ satisfies $(\ExtSecond)$, it now follows that both squares in \eqref{diagram: inclusions} are pullbacks.
\end{proof}

From the above two lemmas, we get the following proposition.

\begin{proposition} \label{proposition: if extensive implies f extensive}
  Let $\C$ be a category and $f\colon A \rightarrow B$ any morphism in $\C$. Let $\iota\colon B \rightarrow X$ be any coproduct inclusion in $\C$. Then, if $\iota f$ is extensive and $\iota$ satisfies $(\ExtSecond)$ it follows that $f$ is extensive.
\end{proposition}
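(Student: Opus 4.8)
The plan is to obtain the result by simply chaining the two lemmas just established, applied to the composable pair $A \xrightarrow{f} B \xrightarrow{\iota} X$. In the notation of Lemma~\ref{lemma: if the squares exist}, I would take its morphisms $f$ and $g$ to be the present $f\colon A \to B$ and $\iota\colon B \to X$, so that its objects $X$, $Y$, $Z$ become $A$, $B$, $X$ respectively. The composite $gf$ appearing in that lemma is then $\iota f$, which is extensive by hypothesis, so the first of the lemma's two requirements is met immediately.

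It remains only to supply the second hypothesis of Lemma~\ref{lemma: if the squares exist}: for every coproduct diagram $B_1 \rightarrow B \leftarrow B_2$ there should exist a pair of pullback squares along $\iota$ whose top and bottom rows are coproduct diagrams. This is precisely the conclusion of the second of the two preceding lemmas, since $\iota\colon B \to X$ is by assumption a coproduct inclusion satisfying $(\ExtSecond)$; applying that lemma (with its $Y$ taken to be $B$ and its $Z$ taken to be $X$) produces exactly the required squares. With both hypotheses of Lemma~\ref{lemma: if the squares exist} thereby verified, its conclusion gives that $f$ is extensive, which is the desired statement.

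As for difficulty, I expect essentially no obstacle beyond matching the two lemmas correctly, since the real content has already been isolated into them and the proposition is their direct composite. The one point deserving a moment's care is confirming that the coproduct diagram $B_1 \rightarrow B \leftarrow B_2$ quantified over in the existence hypothesis of Lemma~\ref{lemma: if the squares exist} is the same family that the second lemma quantifies over, so that the supply of pullback squares is guaranteed for \emph{every} such diagram rather than for a single fixed one. Once this alignment of the universal quantifiers is observed, the argument is immediate.
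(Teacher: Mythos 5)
Your proof is correct and is exactly the paper's argument: the paper derives this proposition by chaining the same two lemmas in the same way (the coproduct-inclusion lemma supplies, for every coproduct decomposition of $B$, the pullback squares required by Lemma~\ref{lemma: if the squares exist}, whose conclusion then gives extensivity of $f$). Your remark about the alignment of the universal quantifiers is the right point of care, and it is handled correctly.
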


\begin{corollary}
Let $\C$ be any category and $\iota:A \to X$ any coproduct inclusion in $\C$. If every coproduct inclusion of $X$ is extensive, then every coproduct inclusion of $A$ is extensive. 
\end{corollary}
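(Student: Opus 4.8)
The plan is to reduce the statement entirely to Proposition~\ref{proposition: if extensive implies f extensive}. Fix an arbitrary coproduct inclusion $\alpha\colon A_1 \to A$ of $A$; it suffices to show that $\alpha$ is extensive, since then the conclusion follows as $\alpha$ was arbitrary. Since $\iota\colon A \to X$ is the given coproduct inclusion, I want to apply the cited proposition with the morphism $\alpha$ playing the role of $f$ and $\iota$ playing the role of the distinguished coproduct inclusion. To do so I need two things: that the composite $\iota\alpha$ is extensive, and that $\iota$ itself satisfies $(\ExtSecond)$.

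The key observation is that the composite $\iota\alpha\colon A_1 \to X$ is again a coproduct inclusion of $X$. Concretely, writing the two coproduct diagrams as $A_1 \xrightarrow{\alpha} A \xleftarrow{\alpha'} A_2$ and $A \xrightarrow{\iota} X \xleftarrow{\iota'} A'$, associativity of coproducts gives $X \cong A_1 + (A_2 + A')$ with $\iota\alpha$ the inclusion of the first summand and $[\iota\alpha', \iota']\colon A_2 + A' \to X$ the complementary inclusion. I would verify this directly through the universal property: any pair of morphisms out of $A_1$ and $A_2 + A'$ restricts to a cocone out of $A_1$ and $A_2$, which factors uniquely through $A = A_1 + A_2$, and the resulting morphism out of $A$ together with the component out of $A'$ then factors uniquely through $X = A + A'$. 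This is the only genuinely computational point, and it amounts to the standard fact that coproduct inclusions are stable under composition.

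With that in hand the argument closes immediately. By hypothesis every coproduct inclusion of $X$ is extensive; in particular $\iota\alpha$ is extensive, and $\iota\colon A \to X$ is itself extensive and hence satisfies $(\ExtSecond)$. Applying Proposition~\ref{proposition: if extensive implies f extensive} to the morphism $\alpha$ and the coproduct inclusion $\iota$ then yields that $\alpha$ is extensive.

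I expect no serious obstacle here: the corollary is essentially a repackaging of Proposition~\ref{proposition: if extensive implies f extensive}. The only care needed lies in the composition-stability of coproduct inclusions noted above, and in correctly matching the notation of the cited proposition (whose objects $A, B, X$ here become $A_1, A, X$ respectively) to the present setting.
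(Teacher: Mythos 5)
Your proof is correct and is essentially the paper's own (implicit) argument: the corollary is stated there as an immediate consequence of Proposition~\ref{proposition: if extensive implies f extensive}, applied exactly as you do, with $\iota$ satisfying $(\ExtSecond)$ by hypothesis and $\iota\alpha$ extensive because it is a coproduct inclusion of $X$ via the decomposition $X \cong A_1 + (A_2 + A')$ --- the same decomposition the paper uses in the lemma preceding that proposition. The only caveat, namely that the coproduct $A_2 + A'$ must exist (which is not automatic in an arbitrary category), is an assumption the paper itself makes silently in that lemma's proof, so your argument matches the paper's own level of rigour.
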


\subsection{Coextensivity of identity morphisms}
Throughout this section we will fix a category $\C$. If $f\: A \rightarrow X$ is any isomorphism in $\C$ then for any morphism $g\:A \to B$, the square 
\[
\xymatrix{
	A \ar[r]^g \ar[d]_f& B \ar[d]^{1_B} \\ 
	X \ar[r]_{gf^{-1}}& B
}
\]
is a pushout in $\C$, and hence the proposition below is immediate.  
\begin{proposition}
	\label{proposition: isomorphisms satisfy E1 and C1}
 Every isomorphism in $\C$ satisfies $(\ExtFirstop)$ and hence also $(\ExtFirst)$ by duality.
\end{proposition}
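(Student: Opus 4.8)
The plan is to verify $(\ExtFirstop)$ directly from the pushout square exhibited just before the statement, and then to obtain $(\ExtFirst)$ formally by dualising. First I would fix an isomorphism $f \colon A \to X$ together with an arbitrary product diagram $A_1 \xleftarrow{\pi_1} A \xrightarrow{\pi_2} A_2$ in $\C$. Since $(\ExtFirstop)$ requires pushing $f$ out along the two projections $\pi_1, \pi_2$, I would apply the preceding observation with $g = \pi_i$ for $i = 1,2$: it tells me that the square with top edge $\pi_i$, left edge $f$, bottom edge $\pi_i f^{-1}$ and right edge $1_{A_i}$ is a pushout. This simultaneously establishes that the required pushouts exist and identifies them explicitly, namely $X_i = A_i$, with structure map $X \to X_i$ equal to $\pi_i f^{-1}$ and coprojection $A_i \to X_i$ equal to $1_{A_i}$.

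It then remains to check that the resulting cospan
\[
A_1 \xleftarrow{\; \pi_1 f^{-1} \;} X \xrightarrow{\; \pi_2 f^{-1} \;} A_2
\]
is a product diagram. But this is precisely the original product cone $(\pi_1, \pi_2)$ with apex $A$, precomposed with the isomorphism $f^{-1} \colon X \to A$. Since precomposing a limit cone with an isomorphism of apices again yields a limit cone — the universal property transports verbatim along the iso — this cospan is a product of $A_1$ and $A_2$. Hence $f$ satisfies $(\ExtFirstop)$.

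For the final clause I would argue purely formally: by Definition~\ref{definition: extensive morphism} the condition $(\ExtFirst)$ is exactly $(\ExtFirstop)$ read in $\C^{\op}$, and a morphism is an isomorphism in $\C$ if and only if it is an isomorphism in $\C^{\op}$. Applying the statement just proved inside $\C^{\op}$ therefore delivers that every isomorphism satisfies $(\ExtFirst)$ as well.

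I expect no genuine obstacle here, since all the content is already packaged in the pushout square supplied before the statement. The only non-formal ingredient — that composition with an isomorphism preserves products — is entirely standard, so the sole point worth stating carefully is the explicit identification of the pushout data in the first step, so that the resulting cospan is visibly the original projections twisted by $f^{-1}$.
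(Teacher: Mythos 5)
Your proof is correct and matches the paper's approach exactly: the paper declares the proposition ``immediate'' from the displayed pushout square, and your argument simply spells out the details it leaves implicit --- instantiating that square with $g = \pi_i$, identifying the pushout data, and noting that a product cone precomposed with the isomorphism $f^{-1}$ is again a product cone, with $(\ExtFirst)$ following by duality.
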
  

If $f: A \rightarrow X$ and $g: B\rightarrow Y$ are isomorphisms in $\C$, then any commutative diagram
\[\xymatrix{A \ar[r] \ar[d]_{f}& B \ar[d]^{g} \\
X \ar[r]& Y}\]
is a pushout, leading to the following proposition.
\begin{proposition} \label{proposition: isomorphisms satisfy C2 iff}
    Each isomorphism in $\C$ satisfies $(\ExtSecondop)$ if and only if the following condition holds: for any pair of morphisms $f_1$ and $f_2$ in $\C$, if their product $f_1\times f_2$ is an isomorphism, then $f_1$ and $f_2$ are isomorphisms.
\end{proposition}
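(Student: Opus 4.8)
The plan is to build everything on the observation recorded just before the statement: any commutative square whose two vertical legs are isomorphisms is automatically a pushout. Combined with the standard fact that isomorphisms are stable under cobase change (the pushout of an isomorphism is an isomorphism), this reduces the proposition to one bookkeeping identity. That identity is as follows. Suppose an isomorphism $f\colon A \to X$ sits inside a commutative diagram of the shape demanded by $(\ExtSecondop)$,
\[
\xymatrix{
A_1 \ar[d]_{h_1} & A \ar[l]_{\pi_1} \ar[r]^{\pi_2} \ar[d]^{f} & A_2 \ar[d]^{h_2} \\
X_1 & X \ar[l]^{p_1} \ar[r]_{p_2} & X_2
}
\]
in which the top row exhibits $A$ as a product $A_1 \times A_2$ with projections $\pi_1,\pi_2$ and the bottom row exhibits $X$ as a product $X_1 \times X_2$ with projections $p_1,p_2$. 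Commutativity gives $p_1 f = h_1 \pi_1$ and $p_2 f = h_2 \pi_2$, so by the universal property of the product $(X,p_1,p_2)$ the map $f$ is the unique arrow determined by these two equations; since $h_1 \times h_2$ satisfies the same equations, $f = h_1 \times h_2$. Hence in this situation ``$f$ is an isomorphism'' is literally the assertion that $h_1 \times h_2$ is an isomorphism.

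For the forward implication I would assume every isomorphism satisfies $(\ExtSecondop)$ and take $f_1\colon A_1 \to X_1$ and $f_2\colon A_2 \to X_2$ with $f_1 \times f_2$ an isomorphism. Setting $A = A_1 \times A_2$, $X = X_1 \times X_2$, $f = f_1 \times f_2$, and $h_i = f_i$, the diagram above has both rows products and commutes by the very definition of a product of morphisms. As $f$ is an isomorphism satisfying $(\ExtSecondop)$, both squares are pushouts. The left square exhibits $f_1$ as the cobase change of $f$ along $\pi_1$ and the right square exhibits $f_2$ as the cobase change of $f$ along $\pi_2$; being pushouts of the isomorphism $f$, both $f_1$ and $f_2$ are isomorphisms, which is exactly the stated condition.

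For the backward implication I would assume the cancellation condition and let $f\colon A \to X$ be any isomorphism placed in a diagram of the above shape with both rows products. The identity $f = h_1 \times h_2$ then holds, so by hypothesis $h_1$ and $h_2$ are isomorphisms. Now the two vertical legs of the left square are $h_1$ and $f$, and those of the right square are $h_2$ and $f$; in each case both legs are isomorphisms, so by the remark preceding the statement each square is a pushout. Therefore $f$ satisfies $(\ExtSecondop)$.

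The only step I would scrutinise is the identification $f = h_1 \times h_2$ together with the correct reading of the two squares as cobase changes of $f$ (equivalently, identifying which object is the colimit of the relevant span); once the arrows are oriented correctly, the product and pushout universal properties do all the work and no genuine categorical obstacle remains.
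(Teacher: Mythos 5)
Your proof is correct and is essentially the argument the paper intends: the paper leaves the proof implicit, deriving it from the same observation that a commutative square whose vertical legs are isomorphisms is a pushout, together with the identification $f = h_1 \times h_2$ and the fact that pushouts of isomorphisms are isomorphisms. Your write-up simply makes these steps explicit.
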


\begin{proposition}
  \label{proposition: identity coextensive implies (co-crisp implies coextensive)}
  For any morphism $f \:A \to B$ in $\C$, if $f$ satisfies $(\ExtFirstop)$ and $1_B$ satisfies $(\ExtSecondop)$ then $f$ is extensive. 
\end{proposition}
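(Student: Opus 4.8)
Since the hypothesis already grants that $f$ satisfies $(\ExtFirstop)$, to prove the proposition it suffices to establish $(\ExtSecondop)$; together these two conditions say exactly that $f$ is coextensive (i.e.\ extensive in $\C^{\op}$). The plan is therefore to start from an arbitrary commutative diagram of the shape required by $(\ExtSecondop)$, with top row a product $A_1 \xleftarrow{p_1} A \xrightarrow{p_2} A_2$, bottom row a product $Y_1 \xleftarrow{s_1} B \xrightarrow{s_2} Y_2$, and vertical legs $f_1, f, f_2$, and to show that both resulting squares are pushouts.

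First I would use $(\ExtFirstop)$ to form the two pushouts of $f$ along $p_1$ and $p_2$, producing comparison objects $B_1$ and $B_2$ together with morphisms $g_i\colon A_i \to B_i$ and $t_i\colon B \to B_i$; by $(\ExtFirstop)$ the resulting cospan $B_1 \xleftarrow{t_1} B \xrightarrow{t_2} B_2$ is a product. The commutativity of the original squares says precisely that $(f_1, s_1)$ and $(f_2, s_2)$ are cocones under the two spans being pushed out, so the universal property of these pushouts yields unique morphisms $h_1\colon B_1 \to Y_1$ and $h_2\colon B_2 \to Y_2$ satisfying $h_i g_i = f_i$ and $h_i t_i = s_i$.

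These data assemble into a commutative diagram whose middle vertical map is $1_B$, whose top row is the product $B_1 \xleftarrow{t_1} B \xrightarrow{t_2} B_2$, and whose bottom row is the product $Y_1 \xleftarrow{s_1} B \xrightarrow{s_2} Y_2$. Invoking the hypothesis that $1_B$ satisfies $(\ExtSecondop)$, both squares of this auxiliary diagram are pushouts; in particular the square with legs $t_1, 1_B, h_1, s_1$ is a pushout, and symmetrically on the right. Finally I would paste this pushout beneath the pushout square produced by $(\ExtFirstop)$ (the one with legs $p_1, f, g_1, t_1$): since $1_B \circ f = f$ and $h_1 g_1 = f_1$, the composite rectangle is exactly the left square of the original diagram, and by the (dual of the) pushout pasting lemma it is a pushout. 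The right square is handled in the same way, which establishes $(\ExtSecondop)$ and hence coextensivity of $f$.

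The main obstacle is the middle step: one must check that the comparison maps $h_1, h_2$ furnished by the pushout universal property are exactly the legs that make the auxiliary $1_B$-diagram commute with the two correct product rows, so that $(\ExtSecondop)$ for $1_B$ applies verbatim. Once these compatibilities ($h_i g_i = f_i$ and $h_i t_i = s_i$) are verified, the remainder reduces to a routine two-fold application of pushout pasting.
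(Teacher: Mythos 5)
Your proposal is correct and follows essentially the same argument as the paper: use $(\ExtFirstop)$ to push $f$ out along the top product projections, induce the comparison maps to the bottom row via the pushout universal property, apply $(\ExtSecondop)$ for $1_B$ to the resulting auxiliary diagram, and conclude by pasting pushouts. The only differences are notational, so there is nothing to add.
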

\begin{proof}
  Suppose $f\colon A\to B$ satisfies ($\ExtFirstop$).  Consider the following commutative diagram
  \[
\xymatrix{
  A_1 \ar[d]_{f_1} & A \ar[l]_{a_1} \ar[r]^{a_2} \ar[d]_{f} & A_2 \ar[d]^{f_2} \\
  B_1 & B \ar[l]^{b_1} \ar[r]_{b_2} & B_2
}
\]
  where the rows are products.  Since $f$ satisfies $(\ExtFirstop)$, the pushouts of $f$ along $a_1$ and $a_2$ exist.  Suppose that the top two squares in the following diagram are these pushouts of $f$

  \[
  \xymatrix{A_1 \ar@/_1.2em/[dd]_{f_1} \ar[d] & \ar[l]_{a_1} \ar[d]^f A \ar[r]^{a_2} & A_2 \ar@/^1.2em/[dd]^{f_2}\ar[d]   \\
  P_1 \pol \ar@{.>}[d]^{p_1} & \ar[l] \ar[d]^{1_B} B \ar[r] & P_2 \por\ar@{.>}[d]_{p_2}\\
  B_1 & \ar[l]^{b_1} B \ar[r]_{b_2}& B_2}
  \]
 so that the middle row is a product diagram.  Since the top two squares are pushouts, there exist morphisms $p_1$ and $p_2$ such that the diagram is commutative.   Since the middle and bottom rows are products and $1_B$ satisfies $(\ExtSecondop)$, the bottom two squares are pushouts.  Pasting pushouts together, we have that the squares in the initial diagram are pushouts.  So $f$ satisfies $(\ExtSecondop)$, and is therefore coextensive.
\end{proof}

Applying the above results and the corresponding dual results, we obtain the following.

\begin{corollary} \label{corollary: Ext1 are extensive}
If every identity morphism in $\C$ is extensive/coextensive, a morphism $f\:X\to Y$ is extensive/coextensive if and only if it satisfies $(\ExtFirst)/(\ExtFirstop)$.     
\end{corollary}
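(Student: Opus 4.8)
The plan is to obtain this as an essentially immediate consequence of Proposition~\ref{proposition: identity coextensive implies (co-crisp implies coextensive)} and its dual, so that no new diagram-chasing is required. I will argue the coextensive half of the statement explicitly and dispatch the extensive half by duality, since the two are formally identical after passing to $\C^{\op}$.

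The forward implication needs no work: by Definition~\ref{definition: extensive morphism} a coextensive morphism is one satisfying both $(\ExtFirstop)$ and $(\ExtSecondop)$, so in particular it satisfies $(\ExtFirstop)$; dually an extensive morphism satisfies $(\ExtFirst)$. Hence only the converse carries content.

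For the converse in the coextensive case, I would take a morphism $f\colon X \to Y$ satisfying $(\ExtFirstop)$ and observe that the standing hypothesis supplies the missing ingredient: since every identity morphism is assumed coextensive, the identity $1_Y$ in particular satisfies $(\ExtSecondop)$. Applying Proposition~\ref{proposition: identity coextensive implies (co-crisp implies coextensive)} with $B := Y$ then gives at once that $f$ is coextensive. The extensive case is obtained by running the same argument in $\C^{\op}$, i.e.\ invoking the dual of Proposition~\ref{proposition: identity coextensive implies (co-crisp implies coextensive)} (a morphism satisfying $(\ExtFirst)$ whose codomain identity satisfies $(\ExtSecond)$ is extensive) together with the hypothesis that each identity is extensive.

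I expect no genuine obstacle here. The only point meriting care is the bookkeeping of which half of the hypothesis is used where, namely that ``identities are (co)extensive'' is deployed solely to guarantee that $1_Y$ satisfies $(\ExtSecondop)$ (resp.\ $(\ExtSecond)$), precisely the hypothesis demanded by the cited proposition; the full (co)extensivity of $1_Y$ is more than is needed, but is exactly what the hypothesis grants.
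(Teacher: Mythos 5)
Your proposal is correct and is exactly the paper's intended argument: the paper derives this corollary by "applying the above results and the corresponding dual results," i.e.\ by invoking Proposition~\ref{proposition: identity coextensive implies (co-crisp implies coextensive)} (whose conclusion, despite the typo ``extensive'' in its statement, is coextensivity) with $B := Y$, together with its dual, just as you do. Your additional bookkeeping remark --- that only the $(\ExtSecondop)$/$(\ExtSecond)$ half of the hypothesis on $1_Y$ is actually used --- is accurate and consistent with what the cited proposition demands.
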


\begin{corollary} \label{corollary: identity ext iff isos ext}
In any category $\C$, every isomorphism is extensive/coextensive if and only if every identity morphism in $\C$ extensive/coextensive. 
\end{corollary}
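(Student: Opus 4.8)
The statement is a biconditional, and since each of the notions ``extensive'' and ``coextensive'' is dual to the other, the plan is to establish the extensive version in full and then obtain the coextensive version by passing to $\C^{\op}$. The forward implication is essentially immediate, whereas the reverse implication is where I would assemble the earlier results of this subsection.

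For the forward direction, I would simply observe that every identity morphism $1_X\colon X \to X$ is itself an isomorphism. Hence, if every isomorphism in $\C$ is extensive, then in particular every identity morphism is extensive, with no further argument required. The same reasoning applies verbatim in the coextensive case.

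For the reverse direction, suppose every identity morphism in $\C$ is extensive, and let $f$ be an arbitrary isomorphism in $\C$. First I would invoke Proposition~\ref{proposition: isomorphisms satisfy E1 and C1}, which guarantees that $f$, being an isomorphism, satisfies $(\ExtFirst)$. Next I would apply Corollary~\ref{corollary: Ext1 are extensive}: under the standing hypothesis that every identity morphism is extensive, a morphism is extensive if and only if it satisfies $(\ExtFirst)$. Combining these two facts yields that $f$ is extensive, completing the reverse direction. The coextensive version then follows by running the identical argument in $\C^{\op}$, replacing $(\ExtFirst)$ by $(\ExtFirstop)$ throughout.

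I do not anticipate a genuine obstacle here, since the substantive content has already been carried out in Proposition~\ref{proposition: isomorphisms satisfy E1 and C1} and Corollary~\ref{corollary: Ext1 are extensive}; the corollary is really just a repackaging of these two results. The only point requiring care is to match the correct halves of each ``extensive/coextensive'' clause consistently---pairing extensivity with $(\ExtFirst)$ and coextensivity with $(\ExtFirstop)$---so that the appeal to duality is applied uniformly.
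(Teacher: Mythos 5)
Your proposal is correct and follows exactly the paper's (implicit) argument: the paper derives this corollary by "applying the above results," namely the triviality that identities are isomorphisms for one direction, and Proposition~\ref{proposition: isomorphisms satisfy E1 and C1} combined with Corollary~\ref{corollary: Ext1 are extensive} for the other, with duality handling the coextensive case. Your assembly of these ingredients, including the care about pairing $(\ExtFirst)$ with extensivity and $(\ExtFirstop)$ with coextensivity, matches the intended proof.
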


Not every category has (co)extensive identity morphisms. To illustrate this, consider the partially ordered set $\{0,1\}$ viewed as category. Then the left-hand square in the diagram 
\[
\xymatrix{
0 \ar[d] & 0 \ar[d] \ar[r] \ar[l] & 0 \ar[d] \\ 
1 & 0 \ar[r] \ar[l]& 0
}
\]
is not a pushout. Consequently, the identity morphism on $0$ does not satisfy $(\ExtSecondop)$.

The proof of the lemma below is routine and left to the reader. 
\begin{lemma} \label{lemma: product diagrams lift via monomorphisms}
Given any commutative diagram
\[
\xymatrix{
  B_1 \ar@{>->}[]+<0ex,-2.2ex>;[d]_-{m_1} & A \ar[l]_{q_1} \ar[r]^{q_2} \ar[d]_{1_A} & B_2 \ar@{>->}[]+<0ex,-2.2ex>;[d]^-{m_2} \\
  A_1 & A \ar[l]^{p_1} \ar[r]_{p_2} & A_2
}
\]
  in $\C$ where $m_1$ and $m_2$ are monomorphisms, if the bottom row is a product diagram, then so is the top row. 
\end{lemma}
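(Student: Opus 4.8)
The plan is to verify directly that $A$, equipped with the maps $q_1$ and $q_2$, satisfies the universal property of the product $B_1 \times B_2$. Note first that commutativity of the two squares gives the identities $m_1 q_1 = p_1$ and $m_2 q_2 = p_2$, and that the bottom row exhibits $A$ as the product $A_1 \times A_2$ with projections $p_1, p_2$. These relations, together with the assumption that $m_1$ and $m_2$ are monomorphisms, are the only inputs needed.

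First I would establish existence of mediating morphisms. Given any object $T$ together with morphisms $t_1 \colon T \to B_1$ and $t_2 \colon T \to B_2$, I would form the composites $m_1 t_1 \colon T \to A_1$ and $m_2 t_2 \colon T \to A_2$, and then invoke the universal property of the product $A = A_1 \times A_2$ to obtain the unique morphism $t \colon T \to A$ satisfying $p_1 t = m_1 t_1$ and $p_2 t = m_2 t_2$. The claim is that this $t$ already factors $t_1$ and $t_2$ through $q_1$ and $q_2$. Indeed, using $m_1 q_1 = p_1$ one computes $m_1 (q_1 t) = p_1 t = m_1 t_1$, and since $m_1$ is a monomorphism this forces $q_1 t = t_1$; the symmetric computation with $m_2$ gives $q_2 t = t_2$.

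Next I would verify uniqueness. Suppose $t' \colon T \to A$ also satisfies $q_1 t' = t_1$ and $q_2 t' = t_2$. Composing with the monomorphisms yields $p_1 t' = m_1 q_1 t' = m_1 t_1 = p_1 t$ and likewise $p_2 t' = p_2 t$, so by the uniqueness clause in the universal property of the product $A = A_1 \times A_2$ we conclude $t' = t$. This completes the verification that the top row is a product diagram.

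There is no genuinely difficult step here; the argument is a routine diagram chase through the universal property of the product (which is precisely why the paper leaves it to the reader). The only point requiring care — and the place where the hypothesis is indispensable — is that the monomorphism property of $m_1$ and $m_2$ is what converts the product-projection identities over $A_1, A_2$ back into the desired identities over $B_1, B_2$, both in the existence step (cancelling $m_i$ to obtain $q_i t = t_i$) and in the uniqueness step (cancelling $m_i$ to match $t'$ with $t$). Without the monos one would only recover factorizations after postcomposition with $m_1, m_2$, which is insufficient.
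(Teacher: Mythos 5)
Your proof is correct: the paper states this lemma's proof is routine and leaves it to the reader, and your direct verification of the universal property (using the joint monicity of $p_1, p_2$ for uniqueness and cancelling the monomorphisms $m_1, m_2$ for existence) is precisely the intended argument.
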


Recall that a morphism $e\colon A \to B$ in $\C$ is called an \emph{extremal epimorphism} if for any composable pair of morphisms $i$ and $m$ in $\C$, if $e = mi$ and $m$ is a monomorphism, then $m$ is an isomorphism. Note that if $\C$ has equalisers, then every extremal epimorphism is an epimorphism.  

\begin{lemma} \label{lemma: f_1 f_2 mono}
  Let $\C$ be a finitely complete category where every product projection in $\C$ is an epimorphism. Then, for any two morphisms $f_1$ and $f_2$ in $\C$, if their product $f_1 \times f_2$ is a monomorphism then $f_1$ and $f_2$ are monomorphisms. 
\end{lemma}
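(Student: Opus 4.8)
The plan is to prove that $f_1$ is a monomorphism, the argument for $f_2$ being entirely symmetric. Write $f_1 \colon A_1 \to B_1$ and $f_2 \colon A_2 \to B_2$. To test whether $f_1$ is monic, I would take an arbitrary parallel pair $g, h \colon T \to A_1$ with $f_1 g = f_1 h$ and aim to conclude $g = h$. The essential idea is to transport this equation into the product $A_1 \times A_2$, where the hypothesis that $f_1 \times f_2$ is a monomorphism can be brought to bear; the difficulty is that there may be no readily available morphism $T \to A_2$ with which to pair $g$ and $h$, since $A_2$ need not admit any ``points''. The device that resolves this is to pad with the object $A_2$ itself.

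Concretely, since $\C$ is finitely complete I would form the product $T \times A_2$ together with its projections $p \colon T \times A_2 \to T$ and $q \colon T \times A_2 \to A_2$, and then build the two morphisms $u = \langle g p,\, q\rangle$ and $v = \langle h p,\, q\rangle$ from $T \times A_2$ into $A_1 \times A_2$. Composing with $f_1 \times f_2$ and using the universal property of the product $B_1 \times B_2$ gives $(f_1 \times f_2)\,u = \langle f_1 g p,\, f_2 q\rangle$ and $(f_1 \times f_2)\,v = \langle f_1 h p,\, f_2 q\rangle$. The assumption $f_1 g = f_1 h$ makes the first components agree, so these two composites coincide; as $f_1 \times f_2$ is a monomorphism, we obtain $u = v$, and projecting onto the $A_1$ factor yields $g p = h p$.

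It remains to cancel $p$. Here the standing hypothesis enters decisively: $p \colon T \times A_2 \to T$ is a product projection, hence an epimorphism, so $g p = h p$ forces $g = h$. This shows $f_1$ is a monomorphism, and the same construction with the roles of the two factors exchanged shows $f_2$ is a monomorphism.

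I do not anticipate a genuine obstacle beyond the choice of the right construction. The only subtle point is precisely the one flagged above, namely manufacturing a suitable codomain map into $A_2$ without assuming $A_2$ has elements; forming $T \times A_2$ and exploiting its canonical projection $q$ circumvents this, and it is exactly in the final cancellation step that the assumption ``every product projection is an epimorphism'' is used, which explains why that hypothesis appears in the statement.
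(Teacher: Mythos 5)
Your proof is correct, but it follows a genuinely different route from the paper's. The paper argues via kernel pairs: it forms the kernel pairs $(K_1,k_1,k_2)$, $(K_2,m_1,m_2)$, $(K,\ell_1,\ell_2)$ of $f_1$, $f_2$ and $f_1\times f_2$; since limits commute with limits, $K_1 \leftarrow K \rightarrow K_2$ is a product diagram, monicity of $f_1\times f_2$ gives $\ell_1=\ell_2$, and the hypothesis on product projections is applied to the induced projections $\alpha_i\colon K \to K_i$, which are then epimorphisms, forcing $k_1=k_2$ and $m_1=m_2$. You instead run a direct generalized-element argument: given $g,h\colon T\to A_1$ with $f_1g=f_1h$, you pad the test object by forming $T\times A_2$, pair via $u=\langle gp,q\rangle$ and $v=\langle hp,q\rangle$, cancel the monomorphism $f_1\times f_2$ to get $gp=hp$, and then cancel the projection $p\colon T\times A_2\to T$, which is where the epimorphism hypothesis enters. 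Both proofs use that hypothesis in a single cancellation step, but on different projections. Your version is more elementary in its demands: it needs only binary products, not kernel pairs, so it actually establishes the lemma in any category with binary products whose projections are epimorphisms, whereas the paper's proof requires the finite completeness it assumes. The paper's kernel-pair formulation, on the other hand, sits naturally beside the way the lemma is later deployed (in Lemma~\ref{lemma: final}, to conclude that pairs of induced morphisms on relation objects are jointly monomorphic, in a diagram already built out of kernel-pair-style data), so the two proofs reflect the two natural ways of testing monicity: against kernel pairs or against arbitrary parallel pairs.
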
 
\begin{proof}
Let $f_1\:A_1 \to B_1$ and $f_2\:A_2 \to B_2$ be any two morphisms in $\C$ such that $f_1 \times f_2$ is a monomorphism. 
  Form the kernel pairs of $f_1 \times f_2$,  $f_1$ and $f_2$ as below, with $\alpha_1$ and $\alpha_2$ induced by $K_1$ and $K_2$ respectively.
  \[\xymatrix{
    K_1 \ar@<0.5ex>[d]^-{k_2} \ar@<-0.5ex>[d]_-{k_1} & \ar@{.>}[l]_{\alpha_1} \ar@{.>}[r]^{\alpha_2} K \ar@<0.5ex>[d]^-{\ell_2} \ar@<-0.5ex>[d]_-{\ell_1} & K_2 \ar@<0.5ex>[d]^-{m_2} \ar@<-0.5ex>[d]_-{m_1}\\
  A_1 \ar[d]_{f_1} &\ar[l]_{\pi_1} A_1 \times A_2  \ar@{>->}[]+<0ex,-2.2ex>;[d]^-{f_1 \times f_2} \ar[r]^{\pi_2} & A_2 \ar[d]^{f_2}\\
  B_1  & \ar[l]^-{\mu_1} B_1 \times B_2  \ar[r]_-{\mu_2} & B_2}\]
  Since $f$ is monic, $\ell_1 = \ell_2$. Furthermore, the top row is a product since limits commute with limits. Then, from $\alpha_1$ and $\alpha_2$ being epimorphisms, we have $k_1 = k_2$ and $m_1 = m_2$ so that $f_1$ and $f_2$ are monomorphisms.
\end{proof}

\begin{proposition} \label{proposition: coextensivity of identity = product projections are extremal}
For any object $A$ in $\C$, if the identity morphism $1_A$ is coextensive, then every product projection of $A$ is an extremal epimorphism. If $\C$ has kernel pairs then the converse holds.
\end{proposition}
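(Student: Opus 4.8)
The plan is to reduce coextensivity of $1_A$ to the single condition $(\ExtSecondop)$ and then to exploit a pushout criterion. Since $1_A$ is an isomorphism, Proposition~\ref{proposition: isomorphisms satisfy E1 and C1} gives that it satisfies $(\ExtFirstop)$ automatically, so $1_A$ is coextensive if and only if it satisfies $(\ExtSecondop)$. The criterion I would establish first is this: for a product projection $p\colon A \to B_1$ and any $u\colon B_1 \to X_1$, the square with top edge $1_A$, left edge $p$, bottom edge $u$ and right edge $up$ is a pushout if and only if $u$ is an isomorphism, since the canonical pushout of $1_A$ along $p$ is $(B_1; 1_{B_1}, p)$ and the comparison map to the given cocone is exactly $u$. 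Thus $(\ExtSecondop)$ for $1_A$ unwinds to the statement: for every product $B_1 \xleftarrow{p_1} A \xrightarrow{p_2} B_2$ and every pair $u_1\colon B_1 \to X_1$, $u_2\colon B_2 \to X_2$ for which $X_1 \xleftarrow{u_1 p_1} A \xrightarrow{u_2 p_2} X_2$ is again a product, both $u_1$ and $u_2$ are isomorphisms.

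For the forward implication I would take a product projection $p_1\colon A \to A_1$ with complementary projection $p_2\colon A \to A_2$ and a factorization $p_1 = m i$ with $m\colon M \to A_1$ a monomorphism. The key move is to check directly that $(A; i, p_2)$ is a product of $M$ and $A_2$: given $f\colon T \to M$ and $g\colon T \to A_2$, the map $h\colon T \to A$ induced by $mf$ and $g$ satisfies $p_2 h = g$, and since $m$ is monic and $m(ih) = p_1 h = mf$ also $ih = f$, with uniqueness immediate. Placing the product $M \xleftarrow{i} A \xrightarrow{p_2} A_2$ on top, the original product $A_1 \xleftarrow{p_1} A \xrightarrow{p_2} A_2$ on the bottom, and the vertical maps $m$, $1_A$, $1_{A_2}$, condition $(\ExtSecondop)$ forces both squares to be pushouts; by the criterion the left pushout says $m$ is an isomorphism. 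Hence $p_1$ is an extremal epimorphism.

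For the converse, using the unwound form of $(\ExtSecondop)$, I would show that whenever $w := u_1 \times u_2\colon A \to X_1 \times X_2$ is an isomorphism (equivalently, the bottom row is a product), each $u_i$ is an isomorphism. Writing $q_1 = u_1 p_1$, the hypothesis makes $q_1$ a product projection of $A$, hence an extremal epimorphism, so it suffices to prove $u_1$ is a monomorphism, after which $q_1 = u_1 p_1$ and extremality give that $u_1$ is an isomorphism. To see $u_1$ is monic I would adapt the kernel-pair argument of Lemma~\ref{lemma: f_1 f_2 mono}, but using only kernel pairs. Let $(K_1; k_1, k_2)$, and the kernel pairs of $u_2$ and of $w$ (the latter $(K; \ell_1, \ell_2)$) be formed. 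Because kernel pairs commute with the binary product, $K \cong K_1 \times K_2$ with projection $\alpha_1\colon K \to K_1$; because $w$ is an isomorphism, $K \cong A$ and $\ell_1 = \ell_2$. Hence $A \cong K \cong K_1 \times K_2$, so $\alpha_1$ is a product projection of $A$ and therefore extremal. From $\ell_1 = \ell_2$ one reads off $k_1 \alpha_1 = k_2 \alpha_1$, and the diagonal $\Delta\colon B_1 \to K_1$ is the equalizer of $k_1, k_2$, so $\alpha_1$ factors through $\Delta$; extremality of $\alpha_1$ forces the monomorphism $\Delta$ to be an isomorphism, whence $k_1 = k_2$ and $u_1$ is monic. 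The argument for $u_2$ is symmetric.

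The main obstacle is exactly this last step: with only kernel pairs available, rather than the full finite completeness used in Lemma~\ref{lemma: f_1 f_2 mono}, one cannot simply invoke ``extremal epimorphisms are epimorphisms''. The resolution is to apply extremality of $\alpha_1$ against the concrete split monomorphism $\Delta$ — the equalizer of a kernel pair always exists — rather than against a general equalizer, and to recognise $\alpha_1$ as a product projection of $A$ via the identifications $A \cong K \cong K_1 \times K_2$, which is what makes its extremality available. Everything else reduces to bookkeeping with the pushout criterion and the universal properties of the products in play.
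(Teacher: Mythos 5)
Your proposal is correct and follows essentially the same route as the paper's proof: the forward direction lifts a monomorphism factorisation of a projection to a second product diagram with the same vertex and applies $(\ExtSecondop)$ to force the comparison monomorphism to be an isomorphism, and the converse forms kernel pairs, uses that limits commute with limits to make the induced maps into the kernel-pair objects product projections of $A$ (hence extremal epimorphisms), and deduces that the vertical maps are monic, hence iso. Where you improve on the paper: its proof asserts that $\alpha_1,\alpha_2$ are ``extremal epimorphisms and hence epimorphisms,'' a step the paper justifies only when equalisers exist, although the proposition assumes only kernel pairs; your device of testing extremality against the split monomorphism $\Delta\colon B_1\to K_1$ (the diagonal, which is always the equaliser of the kernel pair projections) closes exactly this gap.
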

\begin{proof}
Suppose that $1_A$ is coextensive, and that $A_1 \xleftarrow{p_1} A \xrightarrow{p_2} A_2$
is a product diagram. Let $m_1q_1 = p_1$ be a factorisation of $p_1$ where $m_1$ is a monomorphism as in the diagram
 \[
\xymatrix{
  I_1 \ar@{>->}[]+<0ex,-2.2ex>;[d]_-{m_1} & A \ar[l]_{q_1} \ar[r]^{p_2} \ar[d]_{1_A} & A_2 \ar@{>->}[]+<0ex,-2.2ex>;[d]^-{1_{A_2}} \\
  A_1 & A \ar[l]^{p_1} \ar[r]_{p_2} & A_2
}
\]
By Lemma~\ref{lemma: product diagrams lift via monomorphisms}, it follows that the top row is a product diagram and hence that the squares are pushouts, so that $m_1$ is an isomorphism. 
 Conversely, suppose that $\C$ has kernel pairs and that product projections of $A$ are extremal epimorphims. Since all identity morphisms satisfy $(\ExtFirstop)$ by Proposition~\ref{proposition: isomorphisms satisfy E1 and C1}, we prove that identity morphisms satisfy $(\ExtSecondop)$. Consider any commutative diagram
\[
\xymatrix{
  A_1 \ar[d]_{f_1} & A \ar[l]_{p_1} \ar[r]^{p_2} \ar[d]^{1_A} & A_2 \ar[d]^{f_2} \\
  B_1 & A \ar[l]^{q_1} \ar[r]_{q_2} & B_2
}
\]
  where the rows are products.  Since $q_1$ is an extremal epimorphism and $f_1p_1=q_1$, $f_1$ is an extremal epimorphism.  Similarly, $f_2$ is an extremal epimorphism.  Consider the kernel pairs of $f_1$, $1_A$ and $f_2$:
 \[
\xymatrix{
  K_1 \ar@<0.5ex>[d]^-{k_{11}} \ar@<-0.5ex>[d]_-{k_{12}} & A \ar@{.>}[l]_-{\alpha_1} \ar@{.>}[r]^-{\alpha_2} \ar@<0.5ex>[d]^-{1_A} \ar@<-0.5ex>[d]_-{1_A} & K_2 \ar@<0.5ex>[d]^-{k_{21}} \ar@<-0.5ex>[d]_-{k_{22}} \\
  A_1 & A \ar[l]^{p_1} \ar[r]_{p_2} & A_2
}
\]
  The morphisms $\alpha_1$ and $\alpha_2$ are induced by the commutativity of the squares in the previous diagram and satisfy $k_{ij}\alpha_i=p_i$ for all $i,j\in\{1,2\}$. Since limits commute with limits, the top row of this diagram is a product. Therefore, $\alpha_1$ and $\alpha_2$ are both extremal epimorphisms and hence epimorphisms.  Consequently, $k_{11}=k_{12}$ and $k_{21}=k_{22}$.  Thus, $f_1$ and $f_2$ are monomorphims.  Since they are also extremal epimorphisms, they are isomorphisms.  Consequently, both squares in the original diagram are pushouts.  
\end{proof}

\begin{corollary} \label{corollary: identitys are extensive in finitely complete pointed} If $\C$ is finitely complete, the product projections of $\C$ are extremal epimorphisms if and only if every identity morphism in $\C$ is coextensive. \end{corollary}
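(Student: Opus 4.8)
The plan is to deduce this corollary directly from Proposition~\ref{proposition: coextensivity of identity = product projections are extremal} by quantifying that objectwise statement over all objects of $\C$. The key observation is that a morphism is a product projection in $\C$ precisely when it is a projection out of its own domain; that is, the class of product projections of $\C$ is exactly the union, over all objects $A$, of the product projections of $A$ (taking $A$ to be the relevant product object). Consequently the global condition ``every product projection of $\C$ is an extremal epimorphism'' is equivalent to the conjunction, over all objects $A$, of the local conditions ``every product projection of $A$ is an extremal epimorphism''.

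First I would record that, since $\C$ is finitely complete, it in particular has kernel pairs, so the full biconditional packaged in Proposition~\ref{proposition: coextensivity of identity = product projections are extremal} is available for each object $A$: the identity $1_A$ is coextensive if and only if every product projection of $A$ is an extremal epimorphism. This is the single ingredient the whole argument rests on.

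For the forward implication I would assume that every product projection of $\C$ is an extremal epimorphism. Fixing an arbitrary object $A$, its product projections are among these and hence extremal epimorphisms; applying the converse half of the proposition (the half that requires kernel pairs) yields that $1_A$ is coextensive, and since $A$ was arbitrary, every identity morphism is coextensive. For the reverse implication, assuming every identity morphism is coextensive, the first half of the proposition shows that for each $A$ every product projection of $A$ is an extremal epimorphism; since every product projection of $\C$ arises in this way, they are all extremal epimorphisms.

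The argument is essentially bookkeeping, so I do not expect a genuine obstacle; the only points requiring a moment's care are the identification of the global class of product projections with the union of the local ones, and the observation that finite completeness supplies exactly the kernel pairs needed to invoke the converse direction of the proposition uniformly across all objects.
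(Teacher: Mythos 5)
Your proposal is correct and is exactly the deduction the paper intends: the corollary is stated as an immediate consequence of Proposition~\ref{proposition: coextensivity of identity = product projections are extremal}, with finite completeness supplying the kernel pairs needed for the converse half, and the quantification over all objects being the only bookkeeping. Your explicit identification of the global class of product projections with the union of the objectwise ones is the same (routine) observation the paper leaves implicit.
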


\begin{corollary} If $\C$ is a pointed category, every morphism satisfying $(\ExtFirstop)$ is coextensive.
\end{corollary}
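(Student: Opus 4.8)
The plan is to reduce the claim to a single cancellation property for products and then verify that property using the zero morphisms supplied by the pointed structure. By Proposition~\ref{proposition: identity coextensive implies (co-crisp implies coextensive)}, a morphism $f\colon A \to B$ satisfying $(\ExtFirstop)$ is automatically coextensive as soon as $1_B$ satisfies $(\ExtSecondop)$ (I read the conclusion of that proposition as \emph{coextensive}, since its proof establishes exactly $(\ExtSecondop)$). Hence it suffices to prove that every identity morphism in a pointed category satisfies $(\ExtSecondop)$, and then apply that proposition to the codomain of $f$.

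To obtain this, I would route through the isomorphism results already available. Proposition~\ref{proposition: isomorphisms satisfy C2 iff} states that every isomorphism satisfies $(\ExtSecondop)$ precisely when the following product-cancellation condition holds: whenever $f_1 \times f_2$ is an isomorphism, both $f_1$ and $f_2$ are isomorphisms. Granting this condition, Proposition~\ref{proposition: isomorphisms satisfy E1 and C1} supplies $(\ExtFirstop)$ for every isomorphism, so every isomorphism is coextensive; Corollary~\ref{corollary: identity ext iff isos ext} then upgrades this to coextensivity of every identity morphism, and in particular every identity satisfies $(\ExtSecondop)$. Thus the entire argument hinges on verifying product-cancellation in a pointed category.

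The technical core, and the step I expect to be the main obstacle, is establishing product-cancellation \emph{without} assuming finite completeness, in contrast to Corollary~\ref{corollary: identitys are extensive in finitely complete pointed}, whose converse relied on kernel pairs. Suppose $\phi := f_1 \times f_2 \colon A_1 \times A_2 \to B_1 \times B_2$ is an isomorphism, with $f_1\colon A_1 \to B_1$ and $f_2\colon A_2 \to B_2$. Using the zero morphisms I would form the sections $\iota_1^A = \langle 1_{A_1}, 0 \rangle \colon A_1 \to A_1 \times A_2$ and $\iota_1^B = \langle 1_{B_1}, 0 \rangle$, which satisfy $\pi_1 \iota_1 = 1$ together with the compatibility relations $\phi\, \iota_1^A = \iota_1^B f_1$ and $\pi_1^B \phi = f_1 \pi_1^A$. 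From the first relation, together with the fact that $\phi$ and $\iota_1^A$ are monomorphisms, one deduces that $f_1$ is a monomorphism. From the second relation, the morphism $s := \pi_1^A \phi^{-1} \iota_1^B$ satisfies $f_1 s = \pi_1^B \phi \phi^{-1} \iota_1^B = 1_{B_1}$, so $f_1$ is a split epimorphism. A monomorphism that is also a split epimorphism is an isomorphism, and the symmetric argument shows $f_2$ is an isomorphism.

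The essential input throughout is pointedness: it is exactly the zero morphisms that produce the natural sections $\iota_1^A$ and $\iota_1^B$ driving both the monomorphism and the split-epimorphism halves of the argument, and this is what lets the proof proceed in the absence of any limits beyond the given binary products.
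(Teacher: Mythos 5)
Your proof is correct, and it takes a genuinely different route from the paper's. The paper argues: in a pointed category every product projection is a split epimorphism, hence an extremal epimorphism, so by the converse part of Proposition~\ref{proposition: coextensivity of identity = product projections are extremal} every identity morphism is coextensive, and the claim follows from Corollary~\ref{corollary: Ext1 are extensive}. Note, however, that the converse part of that proposition is proved under the hypothesis that $\C$ has kernel pairs, a hypothesis not stated in the corollary; the paper's proof therefore tacitly assumes kernel pairs exist. Your argument avoids this entirely: you verify the product-cancellation condition of Proposition~\ref{proposition: isomorphisms satisfy C2 iff} directly in any pointed category, using the zero morphisms to build the sections $\langle 1_{A_1},0\rangle$ and $\langle 1_{B_1},0\rangle$, deducing that $f_1$ is a monomorphism (since $\iota_1^B f_1 = (f_1\times f_2)\,\iota_1^A$ is a composite of a split monomorphism with an isomorphism) and a split epimorphism (with splitting $\pi_1^A\,(f_1\times f_2)^{-1}\,\iota_1^B$), hence an isomorphism; both identities you use, $(f_1\times f_2)\langle 1_{A_1},0\rangle = \langle 1_{B_1},0\rangle f_1$ and $\pi_1^B(f_1\times f_2)=f_1\pi_1^A$, are easily checked. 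From there the conclusion follows exactly as you say, via Proposition~\ref{proposition: isomorphisms satisfy E1 and C1}, Corollary~\ref{corollary: identity ext iff isos ext} (or simply the observation that identities are isomorphisms), and Proposition~\ref{proposition: identity coextensive implies (co-crisp implies coextensive)} applied to $1_B$ --- whose conclusion, as you correctly observe, should read ``coextensive'' rather than ``extensive''. What the paper's route buys is brevity, since it reuses the extremal-epimorphism machinery already in place; what your route buys is the removal of the implicit kernel-pair assumption, so your proof establishes the corollary for arbitrary pointed categories (with the relevant binary products), which is strictly more general than what the paper's proof delivers as written.
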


\begin{proof} Every product projection in a pointed category $\C$ is a split epimorphism and therefore an extremal epimorphism. Thus, by Proposition~\ref{proposition: coextensivity of identity = product projections are extremal}, all identity morphisms in $\C$ are coextensive. Therefore, the result follows from Corollary~\ref{corollary: Ext1 are extensive}. \end{proof}

Extensivity of a category with finite coproducts is expressed in terms of the functor $+$. The following proposition provides a similar formulation for the extensivity of identity morphisms.

\begin{proposition}
  Let $\C$ be a category with binary coproducts. Identity morphisms are extensive if and only if the functor $$+: (\C \downarrow X_1) \times (\C\downarrow X_2) \longrightarrow (\C \downarrow (X_1+X_2))$$ is conservative for all objects $X_1$ and $X_2$ in $\C$.
\end{proposition}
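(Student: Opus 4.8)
The plan is to reduce the statement to a single elementary condition on coproduct comparison maps. First I would note that every identity morphism is an isomorphism, hence satisfies $(\ExtFirst)$ by Proposition~\ref{proposition: isomorphisms satisfy E1 and C1}; so ``$1_X$ is extensive'' is equivalent to ``$1_X$ satisfies $(\ExtSecond)$''. I would then unwind what conservativity of $+$ means: a morphism of $(\C\downarrow X_1)\times(\C\downarrow X_2)$ is a pair $(\phi_1,\phi_2)$ of slice maps $\phi_i\colon(A_i,a_i)\to(A_i',a_i')$, it is sent by $+$ to $\phi_1+\phi_2$, and since the isomorphisms of a slice category are exactly the underlying isomorphisms of $\C$, conservativity of $+$ says precisely: whenever $\phi_1+\phi_2$ is an isomorphism in $\C$, each $\phi_i$ is an isomorphism in $\C$.

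For the direction ``identities extensive $\Rightarrow$ $+$ conservative'', I would start from such a pair $(\phi_1,\phi_2)$ with $\phi_1+\phi_2$ an isomorphism and build the diagram
\[\xymatrix{
  A_1 \ar[r]^-{\iota_1'\phi_1}\ar[d]_{\phi_1} & A_1'+A_2'\ar[d]^{1} & A_2\ar[l]_-{\iota_2'\phi_2}\ar[d]^{\phi_2}\\
  A_1'\ar[r]_-{\iota_1'} & A_1'+A_2' & A_2'\ar[l]^-{\iota_2'}}\]
whose middle object carries the identity and whose bottom row is the coproduct $A_1'+A_2'$. The comparison map of the top row is exactly $\phi_1+\phi_2$, so the top row is a coproduct; applying $(\ExtSecond)$ to $1_{A_1'+A_2'}$ makes both squares pullbacks. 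Because the right leg of each square is an identity, the canonical pullback of $\iota_i'$ along it is $A_i'$ and the induced comparison map is $\phi_i$, which therefore must be an isomorphism.

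For the converse, given a diagram witnessing $(\ExtSecond)$ for $1_X$ with both rows coproducts, I would read the vertical legs $h_i\colon A_i\to X_i$ as slice maps $(A_i,h_i)\to(X_i,1_{X_i})$ over $X_i$, so that $+$ sends the pair to $h_1+h_2$. The two coproduct hypotheses supply comparison isomorphisms $\langle a_1,a_2\rangle$ and $\langle\iota_1,\iota_2\rangle$, and commutativity of the squares ($a_i=\iota_i h_i$) gives $\langle\iota_1,\iota_2\rangle\circ(h_1+h_2)=\langle a_1,a_2\rangle$; hence $h_1+h_2$ is an isomorphism. Conservativity then forces each $h_i$ to be an isomorphism, and a commutative square one of whose pairs of parallel edges consists of isomorphisms is automatically a pullback, so both squares are pullbacks and $1_X$ is extensive.

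All of the above is bookkeeping once the definitions are unwound; the only points needing care are the correct identification of conservativity with the statement about $\phi_1+\phi_2$, and the observation that an identity morphism sitting on one side of a square collapses the pullback condition to ``the opposite edge is an isomorphism.'' I do not anticipate a genuine obstacle.
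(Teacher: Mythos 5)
Your proof is correct and takes essentially the same route as the paper: both reduce conservativity of $+$ to the statement that $\phi_1+\phi_2$ being an isomorphism forces $\phi_1$ and $\phi_2$ to be isomorphisms, and connect this to condition $(\ExtSecond)$ for identities via the observation that a commutative square having a pair of parallel edges consisting of isomorphisms is automatically a pullback. The only difference is presentational: you inline, specialised to identity morphisms, the content that the paper obtains by citing Proposition~\ref{proposition: isomorphisms satisfy E1 and C1}, Proposition~\ref{proposition: isomorphisms satisfy C2 iff} and Corollary~\ref{corollary: identity ext iff isos ext} in a chain of equivalences.
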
 

\begin{proof}
This result can be derived from the following series of equivalent statements:
\begin{enumerate} \renewcommand*\labelenumi{(\theenumi)}
    \item The functor $+$ is conservative for all objects $X_1$ and $X_2$;
    \item Each pair of morphisms $f_1$ and $f_2$ are isomorphisms whenever $f_1+f_2$ is an isomorphism;
    \item All isomorphisms satisfy $(\ExtSecond)$;
    \item All isomorphisms are extensive;
    \item All identity morphisms are extensive.
\end{enumerate}
The equivalence $(1)\Leftrightarrow (2)$  follows readily, $(2) \Leftrightarrow (3)$ is given (in dual form) in Proposition~\ref{proposition: isomorphisms satisfy C2 iff},
 $(3) \Leftrightarrow (4)$ follows from Proposition~\ref{proposition: isomorphisms satisfy E1 and C1}, while $(4) \Leftrightarrow (5)$ is established in Corollary~\ref{corollary: identity ext iff isos ext}.
\end{proof}

The results of this section culminate in the following corollary.
\begin{corollary}
The following are equivalent for a finitely complete category $\C$:
\begin{itemize}
    \item Every product projection in $\C$ is an extremal epimorphism;
    \item Every identity morphism in $\C$ is coextensive;
    \item The functor $$\times: (\C \downarrow X_1) \times (\C\downarrow X_2) \longrightarrow (\C \downarrow (X_1\times X_2))$$ is conservative for all objects $X_1$ and $X_2$ in $\C$. 
  
\end{itemize}
\end{corollary}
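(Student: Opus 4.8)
The plan is to recognise that this corollary consolidates results already established, together with the duals of the equivalences proved in the preceding proposition. The first two items are equivalent by Corollary~\ref{corollary: identitys are extensive in finitely complete pointed}, which asserts precisely that, for finitely complete $\C$, the product projections are extremal epimorphisms if and only if every identity morphism is coextensive. Thus it remains only to link the third item to the second, that is, to the statement that every identity morphism in $\C$ is coextensive.

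To do this I would dualise the chain of equivalent statements used to prove the preceding proposition, replacing coproducts with products and $(\ExtFirst)$, $(\ExtSecond)$ with $(\ExtFirstop)$, $(\ExtSecondop)$ throughout. Concretely, I would establish the equivalence of the following.
\begin{enumerate} \renewcommand*\labelenumi{(\theenumi$'$)}
    \item The functor $\times$ is conservative for all objects $X_1$ and $X_2$;
    \item Each pair of morphisms $f_1$ and $f_2$ are isomorphisms whenever $f_1 \times f_2$ is an isomorphism;
    \item All isomorphisms satisfy $(\ExtSecondop)$;
    \item All isomorphisms are coextensive;
    \item All identity morphisms are coextensive.
\end{enumerate}
Here $(2')\Leftrightarrow(3')$ is exactly Proposition~\ref{proposition: isomorphisms satisfy C2 iff}; $(3')\Leftrightarrow(4')$ follows from Proposition~\ref{proposition: isomorphisms satisfy E1 and C1}, since every isomorphism already satisfies $(\ExtFirstop)$, so that for an isomorphism coextensivity reduces to $(\ExtSecondop)$; and $(4')\Leftrightarrow(5')$ is Corollary~\ref{corollary: identity ext iff isos ext}. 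The binary products needed to form $\times$ are available because $\C$ is finitely complete. Chaining $(1')\Leftrightarrow(5')$ with Corollary~\ref{corollary: identitys are extensive in finitely complete pointed} then yields the three-way equivalence.

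The one step warranting care, and which I expect to be the main (if modest) obstacle, is $(1')\Leftrightarrow(2')$. The direction $(2')\Rightarrow(1')$ is immediate, since morphisms in the product of slice categories are in particular morphisms of $\C$. For $(1')\Rightarrow(2')$, given arbitrary $f_1\colon A_1 \to B_1$ and $f_2\colon A_2 \to B_2$ with $f_1\times f_2$ an isomorphism, I would take $X_1 = B_1$ and $X_2 = B_2$ and regard $f_i$ as a morphism $(A_i, f_i) \to (B_i, 1_{B_i})$ in $(\C \downarrow B_i)$; its image under $\times$ is $f_1 \times f_2$, so conservativity forces $f_1$ and $f_2$ to be isomorphisms. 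The point to verify is the standard fact that isomorphisms in a slice category coincide with isomorphisms in $\C$, so that conservativity of $\times$ is genuinely the reflection-of-isomorphisms statement $(2')$.
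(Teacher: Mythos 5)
Your proposal is correct and follows essentially the same route the paper intends: the paper offers no separate proof, presenting the corollary as the culmination of the section, namely Corollary~\ref{corollary: identitys are extensive in finitely complete pointed} for the first equivalence together with the dual of the conservativity proposition (whose proof is exactly the chain of equivalences you dualise, using Propositions~\ref{proposition: isomorphisms satisfy C2 iff}, \ref{proposition: isomorphisms satisfy E1 and C1} and Corollary~\ref{corollary: identity ext iff isos ext}). Your extra care over the step that isomorphisms in slice categories coincide with isomorphisms in $\C$ is a valid filling-in of a detail the paper leaves implicit.
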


\subsection{Categories where coproduct inclusions are extensive}
A \emph{coproduct inclusion} in a category $\C$ is a morphism $\iota \: A \to X$ in $\C$ such that there exists a morphism $\iota'\: A' \to X$ such that 
$$\xymatrix{A \ar[r]^{\iota} & X   & A' \ar[l]_{\iota'}}$$
is a coproduct diagram.

\begin{definition}
  Let $\C$ be a category with finite coproducts which admits all pullbacks along coproduct inclusions. Coproducts are said to be \emph{disjoint} in $\C$ if for any coproduct diagram $X_1 \xrightarrow{\iota_1} X \xleftarrow{\iota_2} X_2$,
  the squares 
  \[
  \xymatrix{
  X_1 \ar[d]_{1_{X_1}} \ar[r]^{1_{X_1}} & X_1 \ar[d]_{\iota_1} & 0 \ar[d] \ar[l]\\ 
  X_1 \ar[r]_{\iota_1} & X & \ar[l]^{\iota_2} X_2}
  \]
  are pullbacks. Note, the left-hand square being a pullback implies that coproduct inclusions are monomorphisms.  
\end{definition}

\begin{proposition}\label{proposition: coproduct inclusions are regular monos}
    In a category $\C$ with finite coproducts where every coproduct inclusion satisfies $(\ExtSecond)$, coproducts are disjoint and  every coproduct inclusion is a regular monomorphism. 
\end{proposition}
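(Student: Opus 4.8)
The plan is to treat the two conclusions separately, deriving both from repeated application of $(\ExtSecond)$ to suitable coproduct inclusions, together with the observation that the initial object $0$ is a trivial coproduct summand.

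First, for disjointness, I would fix a coproduct diagram $X_1 \xrightarrow{\iota_1} X \xleftarrow{\iota_2} X_2$ and apply $(\ExtSecond)$ to the inclusion $\iota_1$ itself, against the commutative diagram
\[
\xymatrix{
X_1 \ar[d]_{1_{X_1}} \ar[r]^{1_{X_1}} & X_1 \ar[d]^{\iota_1} & 0 \ar[d] \ar[l]\\
X_1 \ar[r]_{\iota_1} & X & \ar[l]^{\iota_2} X_2}
\]
whose bottom row is the given coproduct and whose top row $X_1 \xleftarrow{1_{X_1}} X_1 \to 0$ is a coproduct because $X_1 + 0 \cong X_1$. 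Since $\iota_1$ satisfies $(\ExtSecond)$, both squares are pullbacks, which is exactly the disjointness condition; in particular the left square being a pullback re-proves that coproduct inclusions are monic. This is the easy half.

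For the regular monomorphism claim I would exhibit each coproduct inclusion as an equaliser. Writing $X = X_1 + X_2$, form $W = X + X_2$ with its inclusions $a\colon X \to W$ and $b\colon X_2 \to W$, and set $g = [a\iota_1, b]\colon X \to W$, so that $g\iota_1 = a\iota_1$ and $g\iota_2 = b$. The key structural observation is that $g$ is again a coproduct inclusion of $W$, with complementary inclusion $a\iota_2$: indeed $[g, a\iota_2]\colon X + X_2 \to W$ merely permutes the three summands $X_1, X_2, X_2$ of $W$ and is therefore an isomorphism. I would then apply $(\ExtSecond)$ to the coproduct inclusion $a\colon X \to W$ along the diagram
\[
\xymatrix{
X_1 \ar[r]^{\iota_1} \ar[d]_{\iota_1} & X \ar[d]^a & X_2 \ar[l]_{\iota_2} \ar[d]^{1_{X_2}}\\
X \ar[r]_{g} & W & X_2 \ar[l]^{a\iota_2}}
\]
whose top row is the original coproduct and whose bottom row $X \xrightarrow{g} W \xleftarrow{a\iota_2} X_2$ is a coproduct by the previous observation. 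Both squares are then pullbacks, and in particular the left square identifies $\iota_1\colon X_1 \to X$ with the pullback of $a$ along $g$.

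It remains to read off the equaliser. Since $a\iota_1 = g\iota_1$, the inclusion $\iota_1$ equalises the pair $a, g\colon X \rightrightarrows W$. Given any $h\colon T \to X$ with $ah = gh$, the cone $(h,h)$ factors through the pullback $X \times_W X$, which the previous step identifies with $X_1$ (both projections being $\iota_1$); this yields a unique $t\colon T \to X_1$ with $\iota_1 t = h$, uniqueness also following from $\iota_1$ being monic. Hence $\iota_1$ is the equaliser of $a$ and $g$, and so a regular monomorphism, and the same argument applied to an arbitrary coproduct inclusion and its complement covers the general case. I expect the only real obstacle to be the middle step: recognising $g$ as a coproduct inclusion and assembling the correct diagram so that $(\ExtSecond)$ for $a$ delivers exactly the pullback square $X_1 = X \times_W X$. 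Once that pullback is in hand, the equaliser verification is routine.
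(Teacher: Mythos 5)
Your proof is correct and follows essentially the same route as the paper: the disjointness argument is exactly the one the paper leaves implicit, and for the regular monomorphism claim both you and the paper embed $X = X_1 + X_2$ into the three-fold coproduct $X_1 + X_2 + X_2$ and use $(\ExtSecond)$ to obtain a pullback square exhibiting $\iota_1$ as the equaliser of the two morphisms $X \rightrightarrows X_1 + X_2 + X_2$ that agree on $X_1$ but send $X_2$ to different copies. The only (immaterial) difference is a transposition of the diagram: the paper applies $(\ExtSecond)$ to the ``twisted'' inclusion with the standard coproduct as bottom row, whereas you apply it to the standard inclusion $a$ and put the twisted coproduct $(g, a\iota_2)$ on the bottom, explicitly checking via the permutation isomorphism that it is a coproduct.
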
 
\begin{proof}
It follows immediately that coproducts are disjoint. Consider the coproduct inclusion $X \xrightarrow{\iota} A$, with complementary inclusion $Y \xrightarrow{\iota'} A$. Form also the coproduct diagram $A \xrightarrow{i} A+Y \xleftarrow{i'} Y$. Then, the rows in the following diagram are coproducts
\[\xymatrix{
X \ar[r]^-{\iota} \ar[d]_{\iota}& A \ar[d]^{\iota_1+1_Y} & Y \ar[d]^{1_Y} \ar[l]_-{\iota_2}\\
A \ar[r]_-{i} & A+Y & Y \ar[l]^-{i'}
}\]
so that both squares are pullbacks. Hence,  $\iota$ is an equaliser of $i$ and $\iota_1 + 1_Y$.
\end{proof} 

\begin{proposition}[Coproduct complements are unique]
  Let $\C$ be a category with an initial object with disjoint coproducts. Then coproduct complements are unique, i.e., if
  \begin{align}
    \label{diagram: two coproduct diagrams}
    \xymatrix{X_1 \ar[r]^{\iota_1} & X & X_2 \ar[l]_{\iota_2}, &  X_1 
    \ar[r]^{\iota_1} & X & X_2' \ar[l]_{\iota_2'}}
  \end{align}
  are both coproduct diagrams in $\C$, then there is an isomorphism $\sigma\:X_2 \to X_2'$ such that  $\iota_2' = \sigma \iota_2$.
\end{proposition}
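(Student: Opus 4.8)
The plan is to realise both $X_2$ and $X_2'$ as one and the same pullback and to let $\sigma$ be the resulting comparison. Since $\iota_2$ and $\iota_2'$ are coproduct inclusions, the standing hypothesis that $\C$ admits pullbacks along coproduct inclusions lets me form $P := X_2 \times_X X_2'$, the pullback of $\iota_2$ along $\iota_2'$; write $q\colon P \to X_2$ and $p\colon P \to X_2'$ for its projections, so that $\iota_2 q = \iota_2' p$, and note that $q$ and $p$ are monomorphisms since they are pullbacks of the monos $\iota_2'$ and $\iota_2$ (coproduct inclusions are monic by disjointness). The whole statement will then reduce to showing that $q$ and $p$ are in fact \emph{isomorphisms}: granting this, I set $\sigma := p q^{-1}\colon X_2 \to X_2'$, which is an isomorphism, and the computation $\iota_2' \sigma = \iota_2' p q^{-1} = \iota_2 q q^{-1} = \iota_2$ delivers exactly the required compatibility $\iota_2 = \iota_2'\sigma$ with the inclusions.

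To get a handle on $q$, I would pull the entire second coproduct diagram $X_1 \xrightarrow{\iota_1} X \xleftarrow{\iota_2'} X_2'$ back along $\iota_2$. Pulling $\iota_2'$ back along $\iota_2$ returns $P$ together with the projection $q$, while pulling $\iota_1$ back along $\iota_2$ returns the initial object $0$, precisely by disjointness of the first coproduct $X = X_1 + X_2$. A symmetric computation, pulling the first coproduct back along $\iota_2'$, returns $0$ from the $\iota_1$-leg (by disjointness of $X = X_1 + X_2'$) and $P$ from the $\iota_2$-leg, with projection $p$. Thus disjointness does the bookkeeping of pinning the two ``cross'' pullbacks at the initial object, exhibiting $X_2$ (resp.\ $X_2'$) as equipped with a pair of pullback squares whose summand data is $0$ and $P$.

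The crux is to pass from these pullback computations to the invertibility of $q$ and $p$, and I expect this to be the main obstacle. What is really needed is that coproducts be stable under pullback: pulling $X \cong X_1 + X_2'$ back along $\iota_2$ should reproduce a coproduct decomposition $X_2 \cong 0 + P$ whose nontrivial injection is $q$, forcing $q$ to be an isomorphism, and dually $X_2' \cong 0 + P$ with injection $p$. Disjointness alone only locates the cross terms at $0$; it does not by itself force the complementary projection onto $X_2$ to be invertible, so the reconstruction of a summand as the pullback $P$ is exactly a universality (pullback-stability) phenomenon and is where the genuine work lies. Once $q$ and $p$ are known to be isomorphisms, the reduction of the first paragraph produces the isomorphism $\sigma = p q^{-1}$ together with the identity $\iota_2 = \iota_2'\sigma$, and the symmetry of the construction (interchanging the roles of the two complements) supplies the two-sided inverse, completing the argument.
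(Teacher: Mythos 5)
Your reduction is, in outline, exactly the paper's own strategy: identify each complement with the pullback of one inclusion along the other, with disjointness pinning the two ``cross'' pullbacks at $0$. Everything you actually prove is correct --- the existence of $P$, the monicity of $q$ and $p$, the identification of the cross terms with $0$, and the observation that $\sigma = pq^{-1}$ finishes the argument once $q$ and $p$ are invertible. But the proposal stops at the decisive step: nothing in it makes $q$ and $p$ isomorphisms, and you yourself defer that step to pullback-stability (universality) of coproducts, which is not among the stated hypotheses. So, as written, this is not a proof of the proposition.

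The substantive point, however, is that your diagnosis of the obstacle is accurate, and the missing step cannot be supplied from the stated hypotheses at all: with only an initial object and disjoint coproducts the statement is false. In $\Grp$, coproducts (free products) are disjoint and all pullbacks exist, and $F_2 = \langle a\rangle * \langle b\rangle = \langle a\rangle * \langle aba^{-1}\rangle$ are two coproduct decompositions with the \emph{same} first inclusion (note that $\{a, aba^{-1}\}$ is also a free basis of $F_2$); yet no isomorphism $\langle b\rangle \to \langle aba^{-1}\rangle$ commutes with the inclusions, since $b \notin \langle aba^{-1}\rangle$. The paper's proof elides exactly the step you flagged: it asserts that the squares with apexes $X_2$ and $X_2'$ over the cospan $X_2 \xrightarrow{\iota_2} X \xleftarrow{\iota_2'} X_2'$ are pullbacks, which already presupposes morphisms $X_2 \to X_2'$ and $X_2' \to X_2$ over $X$ --- precisely what fails in the example above, and precisely the universality phenomenon you isolated. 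The statement becomes true, and your argument completes verbatim, once one additionally assumes that coproduct inclusions satisfy $(\ExtFirst)$ (as in Proposition~\ref{prop: crisp implies extensive}): pulling the coproduct $X = X_1 + X_2'$ back along $\iota_2$ then exhibits a coproduct diagram $0 \to X_2 \xleftarrow{q} P$, forcing $q$ (and, symmetrically, $p$) to be an isomorphism. So the correct conclusion of your analysis is not that the ``genuine work'' remains to be done, but that the proposition needs this extra hypothesis.
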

\begin{proof}
  Suppose we have two coproduct diagrams as in (\ref{diagram: two coproduct diagrams}).
  Then, each right-hand square in
  \[\xymatrix{
    0 \ar[r] \ar[d] \pbl& X_2 \ar[d]_{\iota_2} & X_2 \pbr \ar[l] \ar[d] & 0 \ar[r] \ar[d] \pbl & X_2' \ar[d]_{\iota_2'} & X_2' \pbr \ar[l] \ar[d] \\
    X_1 \ar[r]_{\iota_1} & X & \ar[l]^{\iota_2'} X_2' & X_1 \ar[r]_{\iota_1} & X & \ar[l]^{\iota_2} X_2}\]
  
  is a pullback. Thus, there is an isomorphism $\sigma: X_2 \to X_2'$ such that $\iota_2 = \iota_2' \sigma$. 
\end{proof}

\begin{proposition}
  \label{prop: crisp implies extensive}
  In a category with an initial object and disjoint coproducts, if all coproduct inclusions satisfy $(\ExtFirst)$ then any morphism satisfying $(\ExtFirst)$ is extensive.
\end{proposition}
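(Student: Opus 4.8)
The plan is to derive $(\ExtSecond)$ from $(\ExtFirst)$ under the standing hypotheses. Suppose we are given a commutative diagram
\[
\xymatrix{A_1 \ar[r]^{a_1} \ar[d]_{f_1} & A \ar[d]^f & A_2 \ar[l]_{a_2} \ar[d]^{f_2} \\ X_1 \ar[r]_{x_1} & X & X_2 \ar[l]^{x_2}}
\]
with both rows coproducts. First I would use $(\ExtFirst)$ for $f$ to pull $f$ back along $x_1$ and $x_2$, obtaining canonical pullbacks $P_1 \xrightarrow{p_1} A \xleftarrow{p_2} P_2$ which again form a coproduct. The universal property of these pullbacks yields comparison morphisms $u_1 \colon A_1 \to P_1$ and $u_2 \colon A_2 \to P_2$ with $p_i u_i = a_i$, and the two squares in the given diagram are pullbacks precisely when $u_1$ and $u_2$ are isomorphisms. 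Since $[p_1,p_2]$ and $[a_1,a_2]$ are both the coproduct isomorphism onto $A$ and $[p_1,p_2]\circ(u_1+u_2)=[a_1,a_2]$, the morphism $u_1+u_2$ is automatically an isomorphism; the whole problem reduces to deducing that $u_1$ and $u_2$ are individually invertible.

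To do this I would realise each $P_i$ as a coproduct having $A_i$ as a summand. Applying $(\ExtFirst)$ to the coproduct inclusion $p_1 \colon P_1 \to A$ and the coproduct $A_1 \xrightarrow{a_1} A \xleftarrow{a_2} A_2$, the pullback of $p_1$ along $a_1$ is $A_1$ itself, since the square with top edge $u_1$ and left edge $1_{A_1}$ is a pullback because $p_1$, being a coproduct inclusion, is monic under the disjointness hypothesis. The pullback of $p_1$ along $a_2 = p_2 u_2$ is, by pasting with the disjointness pullback of $p_1$ along $p_2$, the pullback $T_2$ of $0 \to P_2$ along $u_2$. By $(\ExtFirst)$ these assemble into a coproduct $A_1 \xrightarrow{u_1} P_1 \xleftarrow{} T_2$, so $P_1 \cong A_1 + T_2$ with $u_1$ the inclusion; symmetrically $P_2 \cong A_2 + T_1'$ with $u_2$ the inclusion, for a suitable $T_1'$.

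Finally I would combine these decompositions: under $P_1 + P_2 \cong (A_1 + A_2) + (T_2 + T_1')$ the isomorphism $u_1 + u_2$ becomes the coproduct inclusion of $A_1 + A_2$, so disjointness forces its complement $T_2 + T_1'$ to be initial (pull the inclusion, being an iso, back along its complementary inclusion and invoke disjointness). The main obstacle, and the crux of the argument, is then the vanishing of these leftover summands: I would argue that a summand of an initial object is itself initial, since if $0 \cong B + C$ then $\mathrm{Hom}(B,Z)\times\mathrm{Hom}(C,Z) \cong \mathrm{Hom}(0,Z)$ is a singleton for every $Z$, forcing $\mathrm{Hom}(B,Z)$ to be a singleton and hence $B$ to be initial. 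This yields $T_2 \cong 0 \cong T_1'$, so $P_1 \cong A_1 + 0 \cong A_1$ with $u_1$ the inclusion; thus $u_1$, and likewise $u_2$, is an isomorphism, both squares are pullbacks, and $f$ satisfies $(\ExtSecond)$.
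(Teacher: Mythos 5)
Your proof is correct, but it finishes by a genuinely different mechanism than the paper's. Both arguments run on the same engine: by $(\ExtFirst)$ for $f$, the pullbacks $P_1 \xrightarrow{p_1} A \xleftarrow{p_2} P_2$ form a coproduct, so the inclusions $p_i$ themselves satisfy $(\ExtFirst)$ by hypothesis, and pulling them back along the coproduct $A_1 \xrightarrow{a_1} A \xleftarrow{a_2} A_2$ (using disjointness and monicity of coproduct inclusions) splits each $P_i$ as $A_i$ plus a leftover summand. The difference is how the leftover summand is killed. The paper first reduces to identity morphisms via Corollary~\ref{corollary: Ext1 are extensive}, and then observes that the comparison maps are themselves monomorphisms (in your notation: $p_2u_2 = a_2$ is monic, hence so is $u_2$); consequently the pullback of $p_1$ along $a_2 = p_2u_2$ is seen to be $0$ outright, by pasting the disjointness square with the pullback square that monicity of $u_2$ provides, and $(\ExtFirst)$ for $p_1$ then gives a coproduct $A_1 \xrightarrow{u_1} P_1 \xleftarrow{} 0$, i.e.\ $u_1$ is invertible --- each comparison map is handled locally and independently. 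You never use that $u_1,u_2$ are monic; instead you carry the unidentified summands $T_2, T_1'$, regroup the fourfold coproduct, use invertibility of $u_1+u_2$ together with disjointness to force $T_2+T_1'\cong 0$, and then invoke a Hom-set argument that a summand of an initial object is initial. That global finish is correct, and it isolates two tidy reusable facts (a coproduct inclusion that is an isomorphism has initial complement; summands of $0$ are initial), but it is longer than necessary: had you noted that $u_2$ is monic, your $T_2$ would be $0$ on the spot and your last two steps would evaporate. One small point of order: you introduce $T_2$ as the pullback of $0 \to P_2$ along $u_2$, whose existence is not granted by any hypothesis; the clean formulation is to take $T_2$ to be the pullback of $p_1$ along $a_2$, which exists by $(\ExtFirst)$ for $p_1$ --- its identification (by pasting) with the pullback of $0 \to P_2$ along $u_2$ is then valid, but in fact never needed in the rest of your argument.
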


\begin{proof}
  It suffices to show that every identity morphism in $\C$ satisfies $(\ExtSecond)$ by Corollary~\ref{corollary: Ext1 are extensive}. 
  Suppose that we are given any diagram
\begin{equation}
  \label{diagram: coprod inclusions crisp and disjoint}
  \xymatrix{
    A_1 \ar[r]^{a_1} \ar[d]_{i_1} & A \ar[d]_{1_A} & A_2 \ar[l]_{a_2} \ar[d]^{i_2} \\
    B_1 \ar[r]_{b_1} & A & B_2 \ar[l]^{b_2}
  }
\end{equation}
  where the top and bottom rows are coproduct diagrams. Note that since $a_1$ and $a_2$ are monomorphisms, it follows that $i_1$ and $i_2$ are monomorphisms. Consider the commutative diagram diagram below.
  \[
\xymatrix{
      A_1 \ar[r]^{i_1} \ar[d]_{1_{A_1}} & B_1 \ar[d]^{b_1} & 0 \ar[l] \ar[d] & 0 \ar[l] \ar[d] \\
      A_1 \ar[r]_{a_1} & A & B_2 \ar[l]^{b_2} & A_2 \ar[l]^{i_2}
}
\]
  The left square is a pullback (since $b_1$ is a monomorphism). Further, the right square is a pullback since $i_2$ is a monomorphism, and the middle square is a pullback since coproducts are disjoint. Therefore $i_1$ is an isomorphism. We can likewise show that $i_2$ is an isomorphism, so that the two squares in (\ref{diagram: coprod inclusions crisp and disjoint}) are pullbacks.
\end{proof}

\begin{corollary} \label{corollary: disjoint E1 implies extensive}
The following are equivalent for a category $\C$ with an initial object. 
\begin{enumerate}
    \item Every coproduct inclusion in $\C$ is extensive. 
    \item Coproducts are disjoint, and every coproduct inclusion satisfies $(\ExtFirst)$. 
\end{enumerate}
\end{corollary}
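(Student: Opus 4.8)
The plan is to prove the two implications separately, treating each as an essentially immediate consequence of one of the two preceding propositions; the substantive work has already been done there, so the corollary merely packages it.

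For $(1) \Rightarrow (2)$, I would first unwind the definition of extensivity: a morphism is extensive exactly when it satisfies both $(\ExtFirst)$ and $(\ExtSecond)$. Thus, if every coproduct inclusion in $\C$ is extensive, then in particular every coproduct inclusion satisfies $(\ExtFirst)$, which is already the second half of (2). For the first half, I would observe that every coproduct inclusion also satisfies $(\ExtSecond)$, so that the hypothesis of Proposition~\ref{proposition: coproduct inclusions are regular monos} is met; its conclusion then yields that coproducts in $\C$ are disjoint. This establishes (2).

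For $(2) \Rightarrow (1)$, I would appeal directly to Proposition~\ref{prop: crisp implies extensive}. The hypotheses of that proposition are exactly the content of (2) together with the standing assumption that $\C$ has an initial object: coproducts are disjoint and every coproduct inclusion satisfies $(\ExtFirst)$. Its conclusion states that any morphism satisfying $(\ExtFirst)$ is extensive. Applying this to an arbitrary coproduct inclusion, which satisfies $(\ExtFirst)$ by assumption, shows that it is extensive, which is precisely (1).

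The only point demanding any care is confirming that the ambient structure required by the two propositions is present. Proposition~\ref{proposition: coproduct inclusions are regular monos} is phrased for categories with finite coproducts, so I would note that the initial object assumed here, together with the binary coproducts implicit in speaking of coproduct inclusions at all, supplies exactly these. I would similarly remark that condition $(\ExtFirst)$ itself guarantees the existence of the pullbacks along coproduct injections that make disjointness meaningful. Once this bookkeeping is in place there is no genuine obstacle: the corollary follows directly from the two propositions, as its placement suggests.
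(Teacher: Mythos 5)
Your proof is correct and is exactly the paper's (implicit) argument: the corollary carries no proof in the paper precisely because it is the conjunction of Proposition~\ref{proposition: coproduct inclusions are regular monos}, which extracts disjointness from $(\ExtSecond)$, and Proposition~\ref{prop: crisp implies extensive}, which gives the converse implication. One refinement of your final bookkeeping remark: speaking of coproduct inclusions does not in fact force all binary coproducts to exist, but the mismatch in hypotheses is harmless because only the disjointness clause of Proposition~\ref{proposition: coproduct inclusions are regular monos} is needed here, and its ``immediate'' proof just applies $(\ExtSecond)$ for $\iota_1$ to the commutative diagram whose top row is the coproduct $X_1 \xrightarrow{1_{X_1}} X_1 \leftarrow 0$ and whose bottom row is the given coproduct $X_1 \xrightarrow{\iota_1} X \xleftarrow{\iota_2} X_2$, which requires nothing beyond the initial object and the coproduct diagram already at hand.
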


\begin{proposition}
Let $\C$ be a category where every coproduct inclusion is extensive. Then the pullback of an extensive morphism along a coproduct inclusion exists and is extensive. 
\end{proposition}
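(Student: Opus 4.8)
The plan is to reduce the claim to two results already established: Proposition~\ref{proposition: composite of extensive} (extensive morphisms are closed under composition) and Proposition~\ref{proposition: if extensive implies f extensive} (if $\iota f$ is extensive and $\iota$ is a coproduct inclusion satisfying $(\ExtSecond)$, then $f$ is extensive). The existence half of the statement will come essentially for free, and the extensivity half will be a matter of assembling the right factorisation.

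First I would set up the pullback. Let $f\colon A \to X$ be extensive and let $\iota\colon X_1 \to X$ be a coproduct inclusion, with complementary inclusion $\iota'\colon X_2 \to X$, so that $X_1 \xrightarrow{\iota} X \xleftarrow{\iota'} X_2$ is a coproduct diagram. Since $f$ satisfies $(\ExtFirst)$, the pullbacks of $f$ along $\iota$ and $\iota'$ exist; this already yields the existence half of the statement. Writing these pullbacks as
\[
\xymatrix{
A_1 \ar[r]^{a_1} \ar[d]_{f_1} & A \ar[d]^f & A_2 \ar[l]_{a_2} \ar[d]^{f_2} \\
X_1 \ar[r]_{\iota} & X & X_2 \ar[l]^{\iota'}
}
\]
the condition $(\ExtFirst)$ for $f$ furthermore guarantees that the top row $A_1 \xrightarrow{a_1} A \xleftarrow{a_2} A_2$ is a coproduct diagram. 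In particular $a_1$ is itself a coproduct inclusion, and $f_1\colon A_1 \to X_1$ is the pullback of $f$ along $\iota$ whose extensivity we must establish.

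The key observation is that the commutativity of the left-hand pullback square gives $\iota f_1 = f a_1$. Here $a_1$ is a coproduct inclusion, hence extensive by hypothesis, and $f$ is extensive; so by Proposition~\ref{proposition: composite of extensive} the composite $f a_1 = \iota f_1$ is extensive. On the other hand $\iota$ is a coproduct inclusion, hence extensive by hypothesis, and in particular satisfies $(\ExtSecond)$. I would therefore apply Proposition~\ref{proposition: if extensive implies f extensive} to the morphism $f_1\colon A_1 \to X_1$ together with the coproduct inclusion $\iota\colon X_1 \to X$: since $\iota f_1$ is extensive and $\iota$ satisfies $(\ExtSecond)$, it follows that $f_1$ is extensive, which completes the argument.

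I do not anticipate a serious obstacle; the only point requiring care is recognising that $a_1$ (the top leg of the pullback of $f$ along $\iota$) is a coproduct inclusion. This is precisely what $(\ExtFirst)$ for $f$ supplies, and it is what allows $\iota f_1$ to be viewed as a composite $f a_1$ of two morphisms already known to be extensive. Once this factorisation is in place, the two cited propositions close the argument immediately; by symmetry the same reasoning shows that $f_2$, the pullback of $f$ along $\iota'$, is extensive as well.
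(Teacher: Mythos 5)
Your proposal is correct and follows essentially the same route as the paper's own proof: use $(\ExtFirst)$ for $f$ to obtain the pullbacks and the coproduct top row, observe that $a_1$ is then an extensive coproduct inclusion so that $\iota f_1 = f a_1$ is extensive by Proposition~\ref{proposition: composite of extensive}, and conclude via Proposition~\ref{proposition: if extensive implies f extensive}. No gaps; the factorisation you identify is exactly the one the paper uses.
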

\begin{proof}
Let $f:A \to B$ be an extensive morphism in $\C$, then $f$ admits pullbacks along coproduct inclusions by virtue of $(\ExtFirst)$. Now consider the diagram 
\[
\xymatrix{
A_1 \ar[r]^{a_1} \ar[d]_{f_1} & A \ar[d]^f & A_2 \ar[l]_{a_2} \ar[d]^{f_2} \\ 
B_1 \ar[r]_{b_1} & B & B_2 \ar[l]^{b_2}
}
\]
where the squares are pullbacks. Since $f$ is extensive, the top row is a coproduct and hence $a_1$ is extensive. Then by Proposition~\ref{proposition: composite of extensive} it follows that $fa_1$ is extensive, and hence $b_1 f_1$ is extensive. Since $b_1$ is a coproduct inclusion, by Proposition~\ref{proposition: if extensive implies f extensive} it follows that $f_1$ is extensive. 
\end{proof}

\subsection{Coextensivity of product projections} \label{section: coextensivity of product projections}
The \emph{strict refinement property} was initially introduced in  \cite{ChangJonssonTarski1964}, which has a straightforward generalisation to categories given by the following
\begin{definition} \label{definition: strict refinement property}
	An object $X$ in a category $\C$ is said to have the (finite) \emph{strict refinement property} if for any two (finite) product diagrams $(X \xrightarrow{a_i} A_i)_{i \in I}$ and $(X \xrightarrow{b_j} B_j)_{j \in J}$, there exist families of morphisms $(A_i \xrightarrow{\alpha_{i,j}} C_{i,j})_{i\in I,j\in J}$ and $(B_j \xrightarrow{\beta_{i,j}} C_{i,j})_{i\in I, j\in J}$ such that $\alpha_{i,j}a_i =\beta_{i,j}b_j$ and the diagrams $(A_i \xrightarrow{\alpha_{i,s}} C_{i,s})_{s\in J}$ and $(B_j \xrightarrow{\beta_{t,j}} C_{t,j})_{t \in I}$ are product diagrams for any $i\in I$ and $j\in J$. The category $\C$ satisfies the strict refinement property if every object in $\C$ does. 
\end{definition}
The simplest non-trivial strict refinement for an object $X$ is for binary product diagrams. 
\begin{definition} \label{definition: binary strict refinement property}
 An object $X$ in a category $\C$ is said to satisfy the \emph{binary strict refinement property} if for an object $X$ given two binary product diagrams 
\[
A_1 \xleftarrow{a_1} X \xrightarrow{a_2} A_2, \quad  B_1 \xleftarrow{b_1} X \xrightarrow{b_2} B_2, 
\]
there exists a commutative diagram 
\[
\xymatrix{
C_{1,1} & A_1 \ar[r]^{\alpha_{1,2}} \ar[l]_{\alpha_{1,1}} & C_{1,2}\\
B_1 \ar[u]^{\beta_{1,1}} \ar[d]_{\beta_{2,1}}  & \ar[l]_{b_1} X \ar[d]^{a_2} \ar[u]_{a_1} \ar[r]^{b_2} & B_2 \ar[u]_{\beta_{1,2}} \ar[d]^{\beta_{2,2}} \\
C_{2,1} & A_2 \ar[l]^{\alpha_{2,1}}\ar[r]_{\alpha_{2,2}} & C_{2,2}
}
\]
were each edge is a binary product diagram. The category $\C$ then satisfies the binary strict refinement property if every object in $\C$ does.   
\end{definition}

 The proof of the lemma below (which appears as Proposition~2.4 in \cite{hoefnagel2019products}) is standard and left to the reader.
\begin{lemma} \label{lemma: common-coequaliser}
Given any reasonably commutative diagram  
\[
\xymatrix{
 C_1 \ar@{->>}[d]_e \ar@<-.5ex>[r]_{v_1} \ar@<.5ex>[r]^{u_1} & X_1 \ar[d]^f \ar[r]^{q_1} & Q_1 \ar[d]^g \\
 C_2 \ar@<-.5ex>[r]_{v_2} \ar@<.5ex>[r]^{u_2} & X_2 \ar[r]_{q_2} & Q_2
}
\]
in any category, where the top row is a coequaliser diagram and $e$ is an epimorphism, the right-hand square is a pushout if and only if the bottom row is a coequaliser. 
\end{lemma}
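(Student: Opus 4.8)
The plan is to verify both implications straight from universal properties, viewing the right-hand square as the pushout of the span $Q_1 \xleftarrow{q_1} X_1 \xrightarrow{f} X_2$ and the bottom row as a candidate coequaliser of $u_2$ and $v_2$. I read ``reasonably commutative'' as the serial commutativity relations $f u_1 = u_2 e$ and $f v_1 = v_2 e$, together with the right-square relation $g q_1 = q_2 f$. Two facts will be used repeatedly: that $q_1$, being a coequaliser, is an epimorphism, and that $e$ is an epimorphism by hypothesis. The role of $e$ is precisely to let me promote equalities that hold after precomposition with $e$ to genuine equalities, which is where the epimorphism assumption does its work.

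For the forward implication, assume the bottom row is a coequaliser of $u_2$ and $v_2$, and take a test cocone $h\colon X_2 \to T$, $k\colon Q_1 \to T$ on the span with $hf = kq_1$. I first check that $h$ coequalises $u_2$ and $v_2$: from $h u_2 e = h f u_1 = k q_1 u_1 = k q_1 v_1 = h f v_1 = h v_2 e$ and $e$ epi, I conclude $h u_2 = h v_2$. The coequaliser property of $q_2$ then gives a unique $\phi\colon Q_2 \to T$ with $\phi q_2 = h$; that $\phi g = k$ follows from $\phi g q_1 = \phi q_2 f = hf = k q_1$ by cancelling the epimorphism $q_1$, and uniqueness of $\phi$ is immediate since $q_2$ is epi. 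Hence the right-hand square is a pushout.

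For the converse, assume the right-hand square is a pushout. I first verify that $q_2$ is a fork, i.e. $q_2 u_2 = q_2 v_2$, via $q_2 u_2 e = q_2 f u_1 = g q_1 u_1 = g q_1 v_1 = q_2 v_2 e$ and cancellation of $e$. Given any $h\colon X_2 \to T$ with $h u_2 = h v_2$, the composite $hf$ coequalises $u_1$ and $v_1$ (because $h f u_1 = h u_2 e = h v_2 e = h f v_1$), so the top coequaliser yields a unique $k\colon Q_1 \to T$ with $k q_1 = hf$; the pair $(h,k)$ is then a cocone on the span, and the pushout property produces a unique $\psi\colon Q_2 \to T$ with $\psi q_2 = h$ and $\psi g = k$. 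Uniqueness of the coequaliser factorisation follows either because $q_2$ is epi (being a pushout of the epimorphism $q_1$), so $\psi q_2 = h$ already pins down $\psi$, or because any competitor automatically meets the second pushout condition after cancelling $q_1$. The only genuinely delicate point, and the one I would flag as the main obstacle, is keeping the serial commutativity straight and recognising that each fork or cocone condition can only be established after precomposing with $e$ — so the argument turns entirely on $e$ being an epimorphism, and needs no completeness or exactness of $\C$, which is why it holds in an arbitrary category.
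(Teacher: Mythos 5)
Your proof is correct, and it is precisely the standard universal-property verification that the paper leaves to the reader (citing Proposition~2.4 of the reference on products and coequalizers in pointed categories): both directions are handled by chasing the serial commutativity relations, using the epimorphism $e$ to promote fork conditions, and using that $q_1$ (a coequaliser) and $q_2$ (a coequaliser, respectively a pushout of the epimorphism $q_1$) are epimorphisms to get uniqueness of the mediating maps. No gaps; your reading of ``reasonably commutative'' as $fu_1 = u_2e$, $fv_1 = v_2e$, $gq_1 = q_2f$ is the intended one.
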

Note that in every category with finite products, every product projection admits a kernel pair. Specifically, for every product diagram $\xymatrix{X_1 & X_1 \times X_2 \ar[r]^-{p_2} \ar[l]_-{p_1} & X_2}$, the morphism $p_1$ has kernel pair
\[
\xymatrix{
X_1 \times (X_2 \times X_2)  \ar@<0.5ex>[r]^-{1_{X_1} \times \pi_1} \ar@<-0.5ex>[r]_-{1_{X_1} \times \pi_2} & X_1 \times X_2
}
\]
where $\pi_1$ and $\pi_2$ are the first and second projection morphisms of the product $X_2 \times X_2$ respectively.

\begin{lemma} \label{lemma: codisjoint products}
Let $\C$ be a category with finite products where every product projection in $\C$ is a regular epimorphism. Then binary products in $\C$ are co-disjoint, i.e., coproducts are disjoint in $\C^{\op}$. 
\end{lemma}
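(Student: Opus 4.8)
The plan is to dualise the definition of disjoint coproducts and then reduce the one essential pushout to a coequaliser by means of Lemma~\ref{lemma: common-coequaliser}. First I would unwind what it means for coproducts to be disjoint in $\C^{\op}$: for every binary product diagram $X_1 \xleftarrow{p_1} X_1\times X_2 \xrightarrow{p_2} X_2$ in $\C$, two squares must be pushouts in $\C$. The first is the cokernel-pair square of $p_1$, i.e.\ the pushout of $p_1$ along itself whose corner is $X_1$ with both legs the identity; dually to the remark that disjointness forces coproduct inclusions to be monic, this square is a pushout precisely when $p_1$ is an epimorphism, which holds since every product projection is assumed to be a regular (hence ordinary) epimorphism. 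The whole content therefore lies in the second square, the co-intersection square whose pushout corner is the terminal object $1$:
\[
\xymatrix{
X_1 \times X_2 \ar[r]^-{p_2} \ar[d]_{p_1} & X_2 \ar[d] \\
X_1 \ar[r] & 1
}
\]

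To establish that this is a pushout I would apply Lemma~\ref{lemma: common-coequaliser} to the ladder
\[
\xymatrix{
X_1 \times X_1 \times X_2 \ar@{->>}[d]_{e} \ar@<-.5ex>[r]_{v_1} \ar@<.5ex>[r]^{u_1} & X_1 \times X_2 \ar[d]^{p_1} \ar[r]^-{p_2} & X_2 \ar[d]^{t_2} \\
X_1 \times X_1 \ar@<-.5ex>[r]_{\pi_2} \ar@<.5ex>[r]^{\pi_1} & X_1 \ar[r]_-{t_1} & 1
}
\]
Here $u_1,v_1$ are the two projections of the kernel pair of $p_2$, so that the top row is a coequaliser because $p_2$ is a regular epimorphism and its kernel pair is the one recorded just before the lemma; the left vertical $e$ is the product projection onto $X_1\times X_1$, which is an epimorphism; the maps $\pi_1,\pi_2$ are the projections of $X_1\times X_1$; and $t_1,t_2$ are the terminal maps. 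Checking that the ladder commutes is routine from the universal properties of the products and of $1$.

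By the lemma, the right-hand square (which is exactly the co-intersection square above) is a pushout if and only if the bottom row $X_1\times X_1 \rightrightarrows X_1 \to 1$ is a coequaliser. This last step is the crux: the terminal map $t_1\colon X_1\to 1$ is, up to the canonical isomorphism $X_1\cong X_1\times 1$, a product projection and hence a regular epimorphism, and its kernel pair is precisely $\pi_1,\pi_2\colon X_1\times X_1 \to X_1$, since a kernel pair formed over the terminal object is just a binary product. As a regular epimorphism admitting a kernel pair is the coequaliser of that kernel pair, the bottom row is a coequaliser, and so the co-intersection square is a pushout, completing the argument.

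The main obstacle is arranging the ladder so that its bottom row is literally the kernel-pair presentation of the terminal map $t_1$; once this is recognised, the remainder is formal. A secondary point requiring care is that only finite products, and not all finite limits, are assumed, so throughout I must rely on the explicit kernel-pair formula for product projections noted before the lemma, together with the observation that kernel pairs taken over $1$ coincide with binary products.
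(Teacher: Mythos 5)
Your proof is correct and takes essentially the same route as the paper's: the paper also applies Lemma~\ref{lemma: common-coequaliser} to exactly this ladder (with the roles of the two projections interchanged), with the kernel-pair presentation of a product projection on top and the kernel-pair presentation of the terminal map --- itself a product projection, hence a regular epimorphism --- on the bottom. The only difference is that you spell out the epimorphism half of co-disjointness and the justification that the bottom row is a coequaliser, both of which the paper leaves implicit.
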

\begin{proof}
For any two objects $X,Y$ in $\C$ we may apply Lemma~\ref{lemma: common-coequaliser} to  the diagram 
\[
\xymatrix{
 X \times (Y \times Y) \ar@{->>}[d] \ar@<-.5ex>[r]_-{1_X \times \pi_1} \ar@<.5ex>[r]^-{1_X \times \pi_2} & X \times Y \ar[d] \ar[r] & X \ar[d] \\
 Y \times Y \ar@<-.5ex>[r]_{\pi_1} \ar@<.5ex>[r]^{\pi_2} & Y \ar[r] & 1
}
\]
to get that the right square is a pushout. 
\end{proof}

\begin{proposition} \label{proposition: coextensive product projections = strict refinement property}
Let $\C$ be a category with finite products where every product projection is a regular epimorphism. For any object $X$ in $\C$, each product projection of $X$ is coextensive if and only if $X$ satisfies the binary strict refinement property.
\end{proposition}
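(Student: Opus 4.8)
The plan is to read coextensivity of a product projection as the conjunction of $(\ExtFirstop)$ and $(\ExtSecondop)$, matching $(\ExtFirstop)$ against the \emph{existence} of the refinement grid of Definition~\ref{definition: binary strict refinement property} and $(\ExtSecondop)$ against the \emph{uniqueness} of its factors. Fix the two product diagrams $A_1 \xleftarrow{a_1} X \xrightarrow{a_2} A_2$ and $B_1 \xleftarrow{b_1} X \xrightarrow{b_2} B_2$; each of $a_1,a_2,b_1,b_2$ is a product projection of $X$, hence a regular epimorphism, hence the coequaliser of its own kernel pair. For the forward implication, assume every product projection of $X$ is coextensive, and for each $(i,j)$ let $C_{i,j}$ be the pushout of the span $A_i \leftarrow X \rightarrow B_j$, with legs $\alpha_{i,j}\colon A_i \to C_{i,j}$ and $\beta_{i,j}\colon B_j \to C_{i,j}$, so that $\alpha_{i,j}a_i=\beta_{i,j}b_j$. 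Applying $(\ExtFirstop)$ to $b_j$ along $a_1,a_2$ shows $C_{1,j} \xleftarrow{\beta_{1,j}} B_j \xrightarrow{\beta_{2,j}} C_{2,j}$ is a product, and applying it to $a_i$ along $b_1,b_2$ shows $C_{i,1} \xleftarrow{\alpha_{i,1}} A_i \xrightarrow{\alpha_{i,2}} C_{i,2}$ is a product; since pushouts are symmetric the two uses share the corners $C_{i,j}$, and the result is exactly the grid witnessing the binary strict refinement property of $X$.

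For the reverse implication assume $X$ has the binary strict refinement property and let $b_1$ be a product projection. To verify $(\ExtFirstop)$, given $A_1 \leftarrow X \rightarrow A_2$ form the grid and show its top-left square (corner $C_{1,1}$) is the pushout of $b_1$ along $a_1$; the other squares are symmetric and the edge $C_{1,1} \leftarrow B_1 \rightarrow C_{2,1}$ supplies the product demanded by $(\ExtFirstop)$. I would obtain the pushout from Lemma~\ref{lemma: common-coequaliser}, taking the top row to be the kernel pair of $a_1$ coequalising to $a_1$, the middle map to be $b_1$, and the bottom row to be the kernel pair of $\beta_{1,1}$ coequalising to $\beta_{1,1}$ (a coequaliser because $\beta_{1,1}$ is a product projection). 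The comparison $e$ from the kernel pair of $a_1$ to that of $\beta_{1,1}$ exists by the kernel-pair universal property, using $\beta_{1,1}b_1=\alpha_{1,1}a_1$. The key computation is that, under the grid isomorphisms $A_1\cong C_{1,1}\times C_{1,2}$, $A_2\cong C_{2,1}\times C_{2,2}$ and $B_1\cong C_{1,1}\times C_{2,1}$, the map $e$ is identified with the product projection $\alpha_{1,1}\times\alpha_{2,1}\times\alpha_{2,1}$; hence $e$ is a product projection, a regular epimorphism, and in particular an epimorphism, so Lemma~\ref{lemma: common-coequaliser} yields the pushout.

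To verify $(\ExtSecondop)$, consider a commutative diagram with top row $A_1\leftarrow X\rightarrow A_2$ a product and bottom row $D_1 \xleftarrow{d_1} B_1 \xrightarrow{d_2} D_2$ a product. Using $(\ExtFirstop)$ I first produce the pushouts $C_{1,1},C_{2,1}$ of $b_1$ along $a_1,a_2$, and then the induced comparisons $h_1\colon C_{1,1}\to D_1$ and $h_2\colon C_{2,1}\to D_2$ satisfying $h_i\beta_{i,1}=d_i$. The two given squares are pushouts precisely when $h_1$ and $h_2$ are isomorphisms. Since $\langle\beta_{1,1},\beta_{2,1}\rangle$ and $\langle d_1,d_2\rangle$ are isomorphisms (being two product diagrams on $B_1$), the identity $(h_1\times h_2)\langle\beta_{1,1},\beta_{2,1}\rangle=\langle d_1,d_2\rangle$ forces $h_1\times h_2$ to be an isomorphism.

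The main obstacle is the final step: deducing that $h_1$ and $h_2$ are themselves isomorphisms, where the regular-epimorphism hypothesis is used a second time. The cleanest self-contained route I see is to show each $h_i$ is simultaneously an extremal epimorphism and a monomorphism, hence an isomorphism. It is an extremal epimorphism because $h_1\,\mathrm{pr}_{C_{1,1}}=\mathrm{pr}_{D_1}\,(h_1\times h_2)$ is a regular (hence extremal) epimorphism, and a composite being an extremal epimorphism forces its outer factor to be one. It is a monomorphism because, given $h_1u=h_1v$ with $u,v\colon T\to C_{1,1}$, one has $(h_1\times h_2)(u\times 1_{C_{2,1}})=(h_1\times h_2)(v\times 1_{C_{2,1}})$, whence $u\times 1_{C_{2,1}}=v\times 1_{C_{2,1}}$; composing with $\mathrm{pr}_{C_{1,1}}$ gives $u\,\mathrm{pr}_T=v\,\mathrm{pr}_T$ for the projection $\mathrm{pr}_T\colon T\times C_{2,1}\to T$, which is an epimorphism, so $u=v$. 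Equivalently, this implication is exactly the assertion that isomorphisms in $\C$ satisfy $(\ExtSecondop)$ (Proposition~\ref{proposition: isomorphisms satisfy C2 iff}), which holds because product projections are extremal epimorphisms. With $h_1,h_2$ invertible, both squares are pushouts, which establishes $(\ExtSecondop)$ and hence the coextensivity of $b_1$.
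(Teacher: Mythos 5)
Your proof is correct, and its core coincides with the paper's: in the forward direction both arguments read the refinement grid off from the pushouts supplied by $(\ExtFirstop)$, and in the converse both obtain the pushout squares from Lemma~\ref{lemma: common-coequaliser} by checking that the induced map on kernel pairs is, up to the grid isomorphisms, itself a product projection (hence a regular epimorphism, hence epic) --- you run the lemma along the horizontal edges $a_1$ and $\beta_{1,1}$ where the paper runs it along the vertical projections, which is an immaterial transposition. Where you genuinely diverge is in $(\ExtSecondop)$. The paper gets this step from general machinery: since projections are regular epimorphisms, products are codisjoint (Lemma~\ref{lemma: codisjoint products}), and the dual of Corollary~\ref{corollary: disjoint E1 implies extensive} then upgrades $(\ExtFirstop)$ to coextensivity. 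You instead verify $(\ExtSecondop)$ directly: comparison maps $h_1,h_2$ out of the pushouts, $h_1\times h_2$ an isomorphism because $\langle\beta_{1,1},\beta_{2,1}\rangle$ and $\langle d_1,d_2\rangle$ both are, and each $h_i$ an isomorphism because it is an extremal epimorphism (outer factor of one) and a monomorphism (your $u\times 1 = v\times 1$ projection trick). This amounts to an inlined version of Proposition~\ref{proposition: identity coextensive implies (co-crisp implies coextensive)} combined with the converse half of Proposition~\ref{proposition: coextensivity of identity = product projections are extremal}, but your monomorphism step is more elementary than the paper's corresponding arguments (Lemma~\ref{lemma: f_1 f_2 mono} and the kernel-pair computation in Proposition~\ref{proposition: coextensivity of identity = product projections are extremal}), needing only that projections are epic. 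The paper's route buys brevity and the conceptual point that codisjointness alone forces $(\ExtSecondop)$; yours buys self-containedness, and it also sidesteps a small presentational looseness in the paper, namely that Corollary~\ref{corollary: disjoint E1 implies extensive} is stated for \emph{all} projections of the category, whereas here only the projections of the single object $X$ are known to satisfy $(\ExtFirstop)$, so one must unwind the corollary to its object-wise ingredients to apply it --- your argument never faces that issue.
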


\begin{proof}
Suppose first that each product projection of $X$ is coextensive. Let $X$ have the following two product diagrams:
\[
A_1 \xleftarrow{a_1} X \xrightarrow{a_2} A_2, \quad B_1 \xleftarrow{b_1} X \xrightarrow{b_2} B_2.
\]
By coextensivity of these morphisms, we can form pushouts of each $a_i$ along each $b_j$, obtaining the binary product diagrams along the edges of the outer square in the refinement diagram.

Conversely, suppose $X$ satisfies the binary strict refinement property. By Lemma~\ref{lemma: codisjoint products}, since every product projection in $\C$ is a regular epimorphism, products are co-disjoint. Thus, by the dual of Corollary~\ref{corollary: disjoint E1 implies extensive}, we need only show that each product projection satisfies $(\ExtFirstop)$. Let $\pi: X \to A$ be any product projection, and consider a product diagram
\[
X_1 \xleftarrow{x_1} X \xrightarrow{x_2} X_2.
\]
Using the binary strict refinement property, we construct the bottom two squares in the  diagram
\[
\xymatrix{
K_1 \ar@{.>}@<0.5ex>[d]^-{u_2} \ar@{.>}@<-0.5ex>[d]_-{u_1} & \ar@{->>}[l]_{p_1} K \ar@{->>}[r]^{p_2} \ar@<0.5ex>[d]^-{k_2} \ar@<-0.5ex>[d]_-{k_1} & K_2 \ar@{.>}@<0.5ex>[d]^-{v_2} \ar@{.>}@<-0.5ex>[d]_-{v_1} \\
X_1 \ar@{->>}[d]_{\pi_1} & X \ar@{->>}[l]^-{x_1} \ar@{->>}[d]_{\pi} \ar@{->>}[r]_-{x_2} & X_2 \ar@{->>}[d]^{\pi_2} \\
A_1 & \ar[l]^-{y_1} A \ar[r]_-{y_2} & A_2
}
\]
where $\pi_1$ and $\pi_2$ are product projections and the bottom row is a product diagram. The parallel pairs in the top squares are constructed by taking the kernel pairs of $\pi_1$, $\pi$, and $\pi_2$. Hence, each column in this diagram forms a coequaliser diagram. Furthermore, the top row is a product digram (since the bottom row is), so that $p_1$ and $p_2$ are regualr epimorphisms. 
Therefore, by Lemma~\ref{lemma: common-coequaliser}, the bottom two squares are pushouts. Thus, $\pi$ satisfies $(\ExtFirstop)$, proving it is coextensive.
\end{proof}

The following theorem is an adaptation taken from \cite{Hoefnagel2020}; we include its proof for the sake of completeness. 

\begin{theorem} \label{proposition: projection coextensive implies strict refinement}
Let $\C$ be a category with (finite) products and let $X$ be an object with coextensive product projections. Then, $X$ has the (finite) strict refinement property. 
\end{theorem}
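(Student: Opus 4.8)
The plan is to reduce the general finite strict refinement property to the binary case, which coextensivity already delivers, and then to induct on the total number of factors. First I would record that $X$ itself satisfies the binary strict refinement property: given two binary product diagrams $A_1 \xleftarrow{a_1} X \xrightarrow{a_2} A_2$ and $B_1 \xleftarrow{b_1} X \xrightarrow{b_2} B_2$, applying $(\ExtFirstop)$ to each of the coextensive projections $a_1,a_2,b_1,b_2$ produces all four edges of the refinement square as product diagrams, where $C_{i,j}$ is the pushout of $a_i$ along $b_j$. This is precisely the forward direction of the proof of Proposition~\ref{proposition: coextensive product projections = strict refinement property}, and since that direction invokes only coextensivity and not the regular-epimorphism hypothesis stated there, it is available in the present generality. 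This gives the atomic $2 \times 2$ case in the sense of Definition~\ref{definition: binary strict refinement property}.

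The crucial enabling step is a factor-inheritance lemma: every factor of $X$ again has coextensive product projections. Suppose $X \cong Y \times Z$ with projection $p\colon X \to Y$, and let $q\colon Y \to E$ be any product projection of $Y$, say with $Y \cong E \times G$. Then $p$ is a coextensive product projection of $X$, hence satisfies $(\ExtSecondop)$, and the composite $qp\colon X \to E$ is again a product projection of $X$ (regrouping $X \cong E \times G \times Z$), hence coextensive. The dual of Proposition~\ref{proposition: if extensive implies f extensive}—which states that if a composite $gp$ is coextensive and the product projection $p$ satisfies $(\ExtSecondop)$, then $g$ is coextensive—then yields that $q$ is coextensive. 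Thus every factor of $X$ has coextensive product projections, and since a factor of a factor is again a factor, each such factor enjoys the binary strict refinement property.

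With these in hand I would prove the finite strict refinement property by induction on $N = |I| + |J|$. The cases $|I| \le 1$ or $|J| \le 1$ are immediate, and $|I| = |J| = 2$ is the binary case above. For the inductive step I may assume both index sets have at least two elements and are not both of size two, so one of them, say $J$, has at least three elements. I would split off a single factor, $J = \{j_0\} \sqcup J'$, and form $B_{J'} = \prod_{j\in J'} B_j$ with its projection $b_{J'}\colon X \to B_{J'}$. Applying the induction hypothesis to $X$ with the diagrams $(X \xrightarrow{a_i} A_i)_{i\in I}$ and the binary diagram $B_{j_0} \leftarrow X \rightarrow B_{J'}$ (total size $|I| + 2 < N$) refines each $A_i$ as $D_{i,0}\times D_{i,J'}$ and gives $B_{J'} \cong \prod_i D_{i,J'}$. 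I would then apply the induction hypothesis to the factor $B_{J'}$, refining its two decompositions $(D_{i,J'})_{i\in I}$ and $(B_j)_{j\in J'}$ (total size $|I| + |J'| = N-1 < N$) to obtain $E_{i,j}$ with $D_{i,J'} \cong \prod_{j\in J'} E_{i,j}$ and $B_j \cong \prod_i E_{i,j}$. Setting $C_{i,j_0} = D_{i,0}$ and $C_{i,j} = E_{i,j}$ for $j\in J'$, and composing the refinement morphisms of the two stages, assembles the required families $\alpha_{i,j}$ and $\beta_{i,j}$; the identities $\alpha_{i,j}a_i = \beta_{i,j}b_j$ follow by chasing the equalities produced at each stage, and the two product conditions follow by associativity of products.

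I expect the main obstacle to be the factor-inheritance lemma, since it is exactly what legitimises the second reduction: without knowing that the sub-factor $B_{J'}$ carries coextensive product projections, one cannot refine its two decompositions. The remaining difficulty is organisational rather than conceptual—checking that the associativity regroupings genuinely yield product diagrams and that the composed morphisms satisfy the commutativity condition of Definition~\ref{definition: strict refinement property}—which is routine but must be carried out carefully in the indexing.
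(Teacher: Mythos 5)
Your argument for the finite case is correct, and it takes a genuinely different route from the paper's. The paper proves the theorem by a single direct construction: for each $n \in I$ and $m \in J$ it pushes the coextensive projection $b_m$ out along the product diagram $A_n \xleftarrow{a_n} X \xrightarrow{\overline{a_n}} \overline{A_n}$ (where $\overline{A_n} = \prod_{i \neq n} A_i$), obtaining product diagrams $C_{n,m} \leftarrow B_m \to \overline{C_{n,m}}$ from $(\ExtFirstop)$; it then assembles these over all $m \in J$ into one diagram whose two rows are product diagrams, concludes via $(\ExtSecondop)$ that the squares there are pushouts, and uses the fact that a pushout of an isomorphism is an isomorphism to see that $(\alpha_{n,j})_{j \in J}\colon A_n \to \prod_{j \in J} C_{n,j}$ is invertible. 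You instead isolate the binary case (correctly observing that the forward direction of Proposition~\ref{proposition: coextensive product projections = strict refinement property} uses no regular-epimorphism hypothesis) and propagate it by induction; your enabling factor-inheritance lemma --- that factors of $X$ again have coextensive product projections, obtained from the dual of Proposition~\ref{proposition: if extensive implies f extensive} applied to $q$, $p$ and the regrouped projection $qp$ --- is correct, is not needed in the paper's argument, and is of independent interest. The bookkeeping in your inductive step (splitting $J = \{j_0\} \sqcup J'$, refining twice, regrouping by associativity of products) also checks out.

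There is, however, a genuine gap relative to the full statement. Read without the parenthetical ``(finite)'', the theorem asserts that when $\C$ has all products, $X$ has the strict refinement property for \emph{arbitrary} index sets $I$ and $J$, since Definition~\ref{definition: strict refinement property} allows infinite families of product diagrams. Your induction on $N = |I| + |J|$ is meaningful only when both index sets are finite: splitting off one factor at a time never terminates for infinite $J$, and no transfinite variant is indicated (nor is one easy to supply, since the problem does not obviously decrease under removing a single factor infinitely often). The paper's construction, by contrast, is uniform in $I$ and $J$ and covers the infinite case without modification. So your proposal establishes only the ``(finite)'' reading of the theorem; to capture the whole statement you would need to replace the induction by a one-shot argument like the paper's, or otherwise show that the family of binary refinements assembles into an infinitary one --- which is precisely the nontrivial content of the paper's assembled-diagram step.
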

\begin{proof}
Suppose that  $(X \xrightarrow{a_i} A_i)_{i \in I}$ and $(X \xrightarrow{b_j} B_j)_{j \in J}$ are any two (finite) product diagrams for $X$. Let $\overline{A_n}$ be the product of the $A_i$'s where $i \neq n$ and let $\overline{a_n}:X \rightarrow \overline{A_n}$ be the induced morphism $(a_i)_{i\neq n}$, and similarly let $\overline{B_m}$ be the product of the $B_j$'s where $j \neq m$. For each $n \in I$ and $m \in J$ there is a diagram
\[
\xymatrix{
	A_n \ar[d]_-{\alpha_{n,m}} &  X\ar[r]^{\overline{a_n}} \ar[d]^{b_m} \ar[l]_{a_n} & \overline{A_n}  \ar[d]^-{\overline{\alpha_{n,m}}}\\
	C_{n,m}  & \ar[r]_-{\beta'_{n,m}} B_m \ar[l]^-{\beta_{n,m}} & \overline{C_{n,m}}
}
\]
where each square is a pushout, and the bottom row is a product diagram, since $b_n$ is a product projection of $X$. In the diagram 
\[
\xymatrix{
	A_n \ar[d]_-{(\alpha_{n,j})_{j \in J}} & &  X\ar[rr]^{\overline{a_n}} \ar[d]^{(b_j)_{j \in J}} \ar[ll]_{a_n} & & \overline{A_n}  \ar[d]^-{(\overline{\alpha_{n,j}})_{j \in J}} \\
	\prod \limits_{j \in J} C_{n,j}  & & \ar[rr]_-{\prod \limits_{j \in J}\beta'_{n,j}} \prod \limits_{j \in J}B_j \ar[ll]^-{\prod \limits_{j \in J}\beta_{n,j}} & & \prod\limits_{j\in J} \overline{C_{n,j}}
}
\]
the bottom row is a product diagram, and hence each square is a pushout. Since the central vertical morphism in the diagram is an isomorphism, it follows that the morphism $(\alpha_{n,j})_{j\in J}$ is an isomorphism, and we can similarly obtain $(\beta_{i, m})_{i \in I}$ as an isomorphism. 
\end{proof}
As a result of Proposition~\ref{proposition: projection coextensive implies strict refinement} and Proposition~\ref{proposition: coextensive product projections = strict refinement property}, we have the following.

\begin{corollary} 
Let $X$ be an object in a category $\C$ with (finite) products, where every product projection in $\C$ is a regular epimorphism. Then $X$ has the (finite) strict refinement property if and only if every product projection of $X$ is coextensive. 
\end{corollary}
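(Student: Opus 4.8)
The plan is to prove the two implications separately, obtaining each directly from one of the two results immediately preceding the statement. The key observation that glues them together is that the general (finite) strict refinement property restricts, in the two-element index case, to exactly the binary strict refinement property; once this is noted, both directions are essentially immediate.

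For the implication that coextensivity of all product projections of $X$ yields the (finite) strict refinement property, I would simply invoke Theorem~\ref{proposition: projection coextensive implies strict refinement}, which asserts precisely this. Note that this direction does not require the hypothesis that product projections be regular epimorphisms; it holds in any category with (finite) products.

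For the converse, suppose $X$ has the (finite) strict refinement property. I would specialise the defining condition of Definition~\ref{definition: strict refinement property} to the case $I = J = \{1,2\}$: the resulting families $(\alpha_{i,j}\colon A_i \to C_{i,j})$ and $(\beta_{i,j}\colon B_j \to C_{i,j})$ with $\alpha_{i,j}a_i = \beta_{i,j}b_j$, together with the requirement that each $(A_i \xrightarrow{\alpha_{i,s}} C_{i,s})_{s}$ and each $(B_j \xrightarrow{\beta_{t,j}} C_{t,j})_{t}$ be a product diagram, are exactly the data and the four edge-wise product conditions of Definition~\ref{definition: binary strict refinement property}. Hence $X$ satisfies the binary strict refinement property. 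Since every product projection in $\C$ is a regular epimorphism by hypothesis, Proposition~\ref{proposition: coextensive product projections = strict refinement property} applies and gives that the binary strict refinement property of $X$ is equivalent to the coextensivity of every product projection of $X$; I would read off the direction I need to conclude that all product projections of $X$ are coextensive.

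The statement is genuinely a formal corollary, so I do not anticipate a substantive obstacle. The only point warranting a moment's care is the identification above: that the two-element instance of Definition~\ref{definition: strict refinement property} coincides on the nose with Definition~\ref{definition: binary strict refinement property}, matching the commutativity relation $\alpha_{i,j}a_i = \beta_{i,j}b_j$ to the central object $X$ of the binary refinement square and matching each row/column product condition to the corresponding edge of that square. This is a routine unwinding of indices rather than a real argument, and it is the closest thing to a delicate step in the proof.
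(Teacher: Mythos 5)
Your proposal is correct and follows exactly the paper's intended argument: the paper derives this corollary precisely by combining Theorem~\ref{proposition: projection coextensive implies strict refinement} (coextensive projections imply the strict refinement property) with Proposition~\ref{proposition: coextensive product projections = strict refinement property} (binary strict refinement is equivalent to coextensivity of projections under the regular-epimorphism hypothesis). Your explicit check that the two-element instance of Definition~\ref{definition: strict refinement property} is Definition~\ref{definition: binary strict refinement property} is the routine unwinding the paper leaves implicit.
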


\subsubsection{Examples of coextensive product projections}
As a result of Hashimoto's theorem \cite{Hashimoto} for partially ordered sets, every non-empty connected partially ordered set satisfies the strict refinement property. Let $\Pos$ denote the category of non-empty partially ordered sets. It is readily seen that every product projection in $\Pos$ is a regular epimorphism. Thus, we have the following result:

\begin{proposition}
Every product projection of a connected partially ordered set is coextensive in $\Pos$. 
\end{proposition}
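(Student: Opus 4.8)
The plan is to reduce the statement to the equivalence already established in Proposition~\ref{proposition: coextensive product projections = strict refinement property}, combined with Hashimoto's theorem. That proposition requires a category with finite products in which every product projection is a regular epimorphism; since $\Pos$ has finite products and, as observed above, every product projection in $\Pos$ is a regular epimorphism, these hypotheses are satisfied. Hence the proposition is available for every object of $\Pos$.

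Next I would fix a non-empty connected poset $X$. By Hashimoto's theorem \cite{Hashimoto}, $X$ has the strict refinement property in the sense of Definition~\ref{definition: strict refinement property}. The binary strict refinement property of Definition~\ref{definition: binary strict refinement property} is exactly the instance $I = J = \{1,2\}$ of that general property, so $X$ in particular satisfies the binary strict refinement property. This is the key conceptual link: Hashimoto's order-theoretic result furnishes precisely the combinatorial input that the categorical machinery consumes.

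Finally, applying Proposition~\ref{proposition: coextensive product projections = strict refinement property} to the object $X$, the binary strict refinement property of $X$ is equivalent to the coextensivity of every product projection of $X$. Since $X$ has the former, it follows that every product projection of $X$ is coextensive in $\Pos$, which is the desired conclusion.

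As for the main obstacle: the substantive content has already been absorbed into the cited results, so no genuine difficulty remains, and the argument is a direct application. The only points worth checking carefully are that the general strict refinement property specialises to the binary one (immediate from comparing the two definitions) and that the regular-epimorphism hypothesis truly holds in $\Pos$; the latter rests on the observation, asserted above, that product projections of non-empty posets are regular epimorphisms. Granting these two routine verifications, the proof closes in a single invocation of Proposition~\ref{proposition: coextensive product projections = strict refinement property}.
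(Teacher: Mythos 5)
Your proposal is correct and follows essentially the same route as the paper, which derives the result from Hashimoto's theorem (giving the strict refinement property, hence its binary instance, for non-empty connected posets), the observation that product projections in $\Pos$ are regular epimorphisms, and Proposition~\ref{proposition: coextensive product projections = strict refinement property}. The paper leaves exactly these steps implicit in the discussion preceding the statement, so your write-up is just a careful expansion of its intended argument.
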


\begin{remark}
Consider the full subcategory $\CPos$ of connected partially ordered sets. Note that $\CPos$ has finite products, but does not have equalisers. Moreover $\CPos$ has all pushouts, and they are computed as in $\Pos$. Consequently, every product projection in $\CPos$ is coextensive.
\end{remark}
Since product projections preserve meets and joins, another consequence of Hashimoto's theorem is that every non-empty semi-lattice satisfies the strict refinement property in the category $\mathsf{SLat}$ of semi-lattices.

\begin{proposition}
    Every product projection in the category $\mathsf{SLat}$ of semi-lattices is coextensive. 
\end{proposition}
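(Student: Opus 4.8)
The plan is to reduce the statement to Proposition~\ref{proposition: coextensive product projections = strict refinement property}, which says that in a category all of whose product projections are regular epimorphisms, a product projection of an object $X$ is coextensive exactly when $X$ has the binary strict refinement property. Since the binary strict refinement property is a special case of the strict refinement property, applying this to each object of $\mathsf{SLat}$ reduces the proposition to two facts: (i) every product projection in $\mathsf{SLat}$ is a regular epimorphism, and (ii) every semi-lattice has the strict refinement property.

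For (i), I would argue that $\mathsf{SLat}$ is a finitary variety, so that finite products are formed on underlying sets and the regular epimorphisms are precisely the surjective homomorphisms. A projection $p_1 \colon X_1 \times X_2 \to X_1$ is surjective whenever $X_2$ is non-empty, and every semi-lattice is non-empty (it contains its unit); hence every product projection in $\mathsf{SLat}$ is surjective and therefore a regular epimorphism, so the blanket hypothesis of Proposition~\ref{proposition: coextensive product projections = strict refinement property} is met.

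Fact (ii) is the heart of the matter and follows the remark preceding the statement. Any product decomposition of a semi-lattice $X$ is in particular a product decomposition of its underlying poset, so Hashimoto's theorem refines any two such decompositions at the level of posets. The work is then to promote this refinement from $\Pos$ to $\mathsf{SLat}$: one must check that the refining maps $\alpha_{i,j}$ and $\beta_{i,j}$ produced by Hashimoto are semi-lattice homomorphisms and that the refinement squares are product diagrams in $\mathsf{SLat}$ and not merely in $\Pos$. This is precisely where the fact that product projections preserve meets (and joins) is used: it ensures that the order-theoretic data supplied by Hashimoto is compatible with the semi-lattice operation. I expect this lifting step to be the main obstacle. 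Once it is established, every object of $\mathsf{SLat}$ has the strict refinement property, and Proposition~\ref{proposition: coextensive product projections = strict refinement property} then yields that every product projection in $\mathsf{SLat}$ is coextensive.
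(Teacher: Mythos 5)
Your proposal follows essentially the same route as the paper: the paper likewise obtains this result by combining Proposition~\ref{proposition: coextensive product projections = strict refinement property} (with product projections in $\mathsf{SLat}$ being surjective, hence regular epimorphisms) with the observation that Hashimoto's theorem transfers from $\Pos$ to $\mathsf{SLat}$ because product projections preserve meets and joins, so the order-theoretic refinement data is automatically a refinement by semi-lattice homomorphisms. The only detail worth making explicit in your step (ii) is that Hashimoto's theorem applies to \emph{connected} non-empty posets, and the underlying poset of a semi-lattice is indeed connected since any two elements are linked through their meet (or join).
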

Every monoid $M$ admits a \emph{center} $\mathsf{Z}(M)$, which is given by 
\[
\mathsf{Z}(M) = \{x \in M \mid \forall y \in M [xy = yx]\}. 
\]
Then $M$ is said to be \emph{centerless} if $\mathsf{Z}(M) = \{0\}$. As a consequence of \cite{AlgebrasLatticesVarietiesI} (see Corollary 2 in Section 5.6), every centerless monoid has the strict refinement property in $\Mon$. For this reason, we have the following result:
\begin{proposition}
Every product projection of a centerless monoid is coextensive in $\Mon$. 
\end{proposition}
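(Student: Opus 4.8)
The plan is to apply Proposition~\ref{proposition: coextensive product projections = strict refinement property}, which asserts that in a category with finite products where every product projection is a regular epimorphism, all product projections of an object $X$ are coextensive if and only if $X$ satisfies the binary strict refinement property. The strategy is therefore to check that $\Mon$ meets the standing hypotheses of that proposition, to import the strict refinement property of centerless monoids from the cited universal-algebraic source, and then to read off coextensivity.

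First I would verify the two hypotheses on $\C = \Mon$. Since $\Mon$ is a variety it has all finite products, computed componentwise. Next, every product projection $p\colon M \times N \to M$ is a surjective homomorphism — surjectivity uses that $N$ is inhabited, which holds as $N$ contains its identity — and in any variety of algebras a surjective homomorphism is a regular epimorphism, being the coequaliser of its kernel pair (equivalently, of the congruence it induces). Hence every product projection in $\Mon$ is a regular epimorphism, and both hypotheses of Proposition~\ref{proposition: coextensive product projections = strict refinement property} are satisfied.

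Next I would invoke the algebraic input: by Corollary~2 in Section~5.6 of \cite{AlgebrasLatticesVarietiesI}, every centerless monoid $M$ has the strict refinement property in $\Mon$, and the binary strict refinement property of Definition~\ref{definition: binary strict refinement property} is precisely the instance of Definition~\ref{definition: strict refinement property} with $I = J = \{1,2\}$, so $M$ in particular satisfies the binary strict refinement property. Applying Proposition~\ref{proposition: coextensive product projections = strict refinement property} to $X = M$ then yields that every product projection of $M$ is coextensive. I expect no genuine obstacle here: the only point requiring care is the identification of regular epimorphisms in $\Mon$ with surjections, while the substantive content — the strict refinement property of centerless monoids — has already been established in the literature and is being transported through the categorical equivalence of Proposition~\ref{proposition: coextensive product projections = strict refinement property}.
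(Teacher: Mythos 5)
Your proposal is correct and follows essentially the same route as the paper: the paper also obtains this result by combining the cited fact that centerless monoids have the strict refinement property (Corollary~2, Section~5.6 of \cite{AlgebrasLatticesVarietiesI}) with the equivalence of Proposition~\ref{proposition: coextensive product projections = strict refinement property}, whose hypothesis holds since product projections in $\Mon$ are surjective (monoids contain an identity) and hence regular epimorphisms. Your write-up merely makes explicit the hypothesis-checking that the paper leaves implicit.
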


As shown in \cite{Hoefnagel2020}, in a Barr-exact \cite{BarrGrilletOsdol1971} Mal'tsev category, every centerless object has coextensive product projections. 

\subsection{$\M$-coextensivity}  
The main purpose of the paper \cite{Hoefnagel2020} was to study categorical aspects of the \emph{strict refinement property} \cite{ChangJonssonTarski1964} for varieties of universal algebras. The approach taken in that paper is object-wise, through a notion introduced there of an $\mathcal{M}$-coextensive object. Below, we describe the dual notion. 

\begin{definition} \label{def-M-pushout}
Let $\C$ be a category and $\M$ a class of morphisms from $\C$. A commutative square
\[
\xymatrix{
P \ar[r]^a \ar[d]_b & A \ar[d] \\
B \ar[r] & X
}
\] 
in $\C$ is called an \emph{$\M$-pullback} if it is a pullback in $\C$, and both $a$ and $b$ are morphisms in $\M$.
\end{definition}
 \begin{definition} \label{def-M-coextensive}
 	Let $\C$ be a category and $\M$ a class of morphisms in $\C$. An object $A$ is said to be $\M$-\emph{extensive} if every morphism in $\M$ with codomain $A$ admits an $\M$-pullback along every coproduct inclusion of $A$, and in each commutative diagram 
 	\[
 	\xymatrix{
 		X_1 \ar[d] \ar[r] &  X \ar[d]  & X_2 \ar[l] \ar[d] \\
 		A_1 \ar[r] &  A   & A_2 \ar[l]
 	}
 	\]
  where the bottom row is a coproduct diagram and the vertical morphisms belong to $\M$, the top row is a coproduct diagram if and only if both squares are $\M$-pullbacks. 
 \end{definition}
  \begin{remark}
The notion above is the dual notion of $\M$-coextensive object given in \cite{Hoefnagel2020}. That is, given a class of morphisms $\M$ in a category $\C$ with finite coproducts, an object $A$ is $\M$-coextensive if and only if $A$ is $\M$-extensive in $\C^{\op}$. 
  \end{remark}
\begin{proposition}
Let $\mathcal{M}$ be a class of morphisms in a category $\C$, where $\M$ is stable under pullbacks along coproduct inclusions. Then an object $A$ is $\M$-extensive if and only if every morphism in $\M$ with codomain $A$ is extensive. 
\end{proposition}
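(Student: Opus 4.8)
The plan is to prove the two implications separately, in each case matching the two clauses of Definition~\ref{def-M-coextensive} against the conditions $(\ExtFirst)$ and $(\ExtSecond)$ for the individual morphisms of $\M$ with codomain $A$. The hypothesis that $\M$ is stable under pullback along coproduct inclusions is the device that lets one pass between ``pullback'' and ``$\M$-pullback'': if $m\colon Y\to A$ lies in $\M$ and $A_1\to A$ is a coproduct inclusion, then the pullback projection $Y_1\to A_1$ again lies in $\M$, so any pullback square of an $\M$-morphism along a coproduct inclusion has its vertical projection leg in $\M$ automatically.

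I would begin with the implication from right to left: assuming each $m\in\M$ with codomain $A$ is extensive, I derive that $A$ is $\M$-extensive. For the first clause, fix such an $m$ and a coproduct inclusion $A_1\to A$; by $(\ExtFirst)$ the pullback of $m$ along it exists and, together with the pullback along the complementary inclusion, yields a coproduct in the top row. Stability places the vertical projection legs in $\M$, so these squares are the required $\M$-pullbacks (modulo checking that their coproduct-inclusion legs also lie in $\M$, which I flag below). For the second clause, take a commutative diagram as in Definition~\ref{def-M-coextensive} with bottom row a coproduct and all verticals in $\M$; the middle vertical then lies in $\M$ and is extensive. If both squares are $\M$-pullbacks they are pullbacks of the middle morphism along coproduct inclusions, so $(\ExtFirst)$ makes the top row a coproduct; conversely, if the top row is a coproduct, then $(\ExtSecond)$ for the middle morphism forces both squares to be pullbacks. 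This gives the required biconditional.

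For the forward implication I would assume $A$ is $\M$-extensive and show each $m\in\M$ with codomain $A$ is extensive. Condition $(\ExtFirst)$ is immediate: the first clause of $\M$-extensivity supplies the pullbacks of $m$ along coproduct inclusions as $\M$-pullbacks, hence as pullbacks, and the ``$\M$-pullbacks imply coproduct'' half of the second clause makes the resulting top row a coproduct. For $(\ExtSecond)$ one starts from a diagram whose top and bottom rows are coproducts with $m$ as middle vertical, forms the canonical $\M$-pullbacks from $(\ExtFirst)$, and compares the given coproduct decomposition of the domain of $m$ with the one arising from the pullbacks; the induced comparison morphisms must then be shown to be isomorphisms so that the given squares agree with the pullback squares.

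The steps I expect to cause the most trouble are the two places where the structure of an $\M$-pullback must be reconciled with that of an ordinary pullback. First, an $\M$-pullback requires, by Definition~\ref{def-M-pushout}, that \emph{both} legs out of the pullback vertex belong to $\M$; stability delivers the vertical projection leg, but the other leg is a coproduct inclusion, and securing its membership in $\M$ is the delicate point when one has to \emph{produce} $\M$-pullbacks in the right-to-left direction. Second, in verifying $(\ExtSecond)$ for the forward direction the side verticals of the comparison diagram are a priori arbitrary, so after assembling the comparison morphisms into a single isomorphism one must still deduce that each component is individually invertible; this componentwise step is the genuine crux of the argument, and it is here that the object-level $\M$-extensivity of $A$, rather than the mere existence of the pullbacks, has to be exploited.
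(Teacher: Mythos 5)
Your skeleton is the natural one, and the routine parts of it are carried out correctly: in the forward direction, clause (i) of Definition~\ref{def-M-coextensive} together with the ``$\M$-pullbacks imply coproduct'' half of clause (ii) does yield $(\ExtFirst)$, and in the backward direction the biconditional of clause (ii) would follow from $(\ExtFirst)$ and $(\ExtSecond)$ of the middle morphism \emph{if} the membership questions you flag could be settled. The trouble is that the two steps you flag are not peripheral refinements to be ``flagged below''---they are the entire content of the statement, and neither is carried out, nor can they be completed by the routes you indicate. Take $(\ExtSecond)$ in the forward direction: you form the canonical $\M$-pullbacks $P_1 \to X \leftarrow P_2$, obtain comparisons $u_i\colon X_i \to P_i$ with $p_iu_i = x_i$, and then must show each $u_i$ invertible. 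Since both $(x_1,x_2)$ and $(p_1,p_2)$ are coproduct decompositions of $X$, all you get for free is that $u_1 + u_2$ corresponds to $1_X$; deducing that each $u_i$ is invertible is exactly an instance of conservativity of the functor $+$, i.e.\ extensivity of identity morphisms (Proposition~\ref{proposition: isomorphisms satisfy C2 iff} in dual form, Corollary~\ref{corollary: identity ext iff isos ext}), which fails in general (the paper's two-element poset) and is not supplied by the hypotheses. The only mechanism $\M$-extensivity of $A$ offers for forcing the \emph{given} squares to be pullbacks is clause (ii) applied to the given diagram, but that clause requires all three verticals to lie in $\M$, whereas the outer verticals $f_1,f_2$ of an $(\ExtSecond)$-diagram are arbitrary; stability would place them in $\M$ only as pullbacks of $m$ along $a_1,a_2$, i.e.\ only after you already know the squares are pullbacks. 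The argument is circular at precisely the point you call the crux.

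The backward direction has the same defect in a sharper form: it is not merely delicate, it fails under the letter of the definitions. Definition~\ref{def-M-pushout} requires \emph{both} projections of an $\M$-pullback to lie in $\M$; stability gives the leg parallel to $m$, but the other leg is a coproduct inclusion of the domain of $m$, and nothing in the hypotheses puts it in $\M$. Concretely, take $\C = \Set$ and $\M$ the class of all isomorphisms: $\M$ is stable under pullback along coproduct inclusions and every member of $\M$ is extensive, yet for an object $A$ with a nontrivial decomposition $A_1 + A_2 = A$, any pullback of an isomorphism $m\colon X \to A$ along the non-invertible inclusion $A_1 \to A$ has its leg into $X$ non-invertible, so no $\M$-pullback exists and clause (i) fails; thus $A$ is not $\M$-extensive. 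The proposition therefore only becomes a routine matching argument if one reads the definitions more weakly than they are written---an $\M$-pullback requiring only the pulled-back morphism to lie in $\M$, and clause (ii) constraining only the middle vertical---in which case both of your problematic steps evaporate (in particular $(\ExtSecond)$ is then immediate from clause (ii), and there is no crux at all). As it stands, your proposal is a correct plan with its two essential steps missing, and those steps cannot be filled in as proposed.
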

\begin{remark}
In any category $\C$ we may consider $\mathcal{M}$ to be the class of monomorphisms in $\C$, in which case $\mathcal{M}$ is closed under pullbacks. Then as defined in \cite{DvorakZemlicka2022}, an object is called mono-extensive if it is $\mathcal{M}$-extensive. Moreover, what is termed a unique-decomposition category (UD-category) has the property that every object in such a category is mono-extensive. For this reason, in every UD-category every monomorphism is extensive. 
\end{remark}
\begin{corollary}
Let $\C$ be a category with finite products in which every product projection is coextensive, then every object in $\C$ is $\mathcal{M}$-coextensive where $\mathcal{M}$ is the class of all product projections in $\C$. 
\end{corollary}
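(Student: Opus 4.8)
The plan is to obtain this as a formal consequence of the preceding proposition by passing to the opposite category. By definition an object $A$ is $\M$-coextensive exactly when it is $\M$-extensive in $\C^{\op}$, and in $\C^{\op}$ the product projections of $\C$ are precisely the coproduct inclusions. To invoke the preceding proposition in $\C^{\op}$, the hypothesis to check is that $\M$ is stable under pullbacks along coproduct inclusions \emph{in} $\C^{\op}$, equivalently that $\M$ is stable under pushouts along product projections \emph{in} $\C$: for every $m \in \M$ and every product projection $\pi$ sharing the domain of $m$, the pushout of $m$ along $\pi$ should exist and again belong to $\M$.

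The key step is the verification of this pushout-stability, and it is here that the coextensivity hypothesis enters. Let $m\colon X \to X_1$ be a member of $\M$, so $m$ is a projection of some product diagram $X_1 \xleftarrow{m} X \xrightarrow{p_2} X_2$, and let $\pi\colon X \to Y_1$ be any product projection with domain $X$. By hypothesis $\pi$ is coextensive, so condition $(\ExtFirstop)$ applies to $\pi$ and the product diagram above: $\pi$ admits pushouts along $m$ and along $p_2$, and the two resulting squares assemble into a diagram
\[
\xymatrix{
X_1 \ar[d]_{m'} & X \ar[l]_{m} \ar[r]^{p_2} \ar[d]^{\pi} & X_2 \ar[d]^{p_2'} \\
P_1 & Y_1 \ar[l]^{n_1} \ar[r]_{n_2} & P_2
}
\]
whose bottom row $P_1 \xleftarrow{n_1} Y_1 \xrightarrow{n_2} P_2$ is a product diagram. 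The pushout of $m$ along $\pi$ is exactly the leg $n_1\colon Y_1 \to P_1$ parallel to $m$; as a projection of a product diagram it lies in $\M$. This establishes the required stability.

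With stability in hand, the preceding proposition read in $\C^{\op}$ yields that $A$ is $\M$-coextensive if and only if every morphism of $\M$ with domain $A$ is coextensive. The morphisms of $\M$ with domain $A$ are precisely the product projections out of $A$, and these are coextensive by hypothesis; hence every object $A$ of $\C$ is $\M$-coextensive. The main obstacle is the verification of pushout-stability of product projections along product projections, but once $(\ExtFirstop)$ is recognised as supplying simultaneously the existence of the pushout and the fact that the relevant leg is again a product projection, the remainder is a routine dualisation.
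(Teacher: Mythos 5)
Your proof is correct and follows the route the paper intends: the corollary is stated as an immediate consequence of the preceding proposition, applied in $\C^{\op}$ with $\M$ the class of product projections, and the only substantive point is the stability check, which you carry out correctly by using $(\ExtFirstop)$ for the coextensive projection $\pi$ to produce the pushout square and to identify its leg $n_1$ as a projection of the resulting product diagram, hence a member of $\M$.
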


The notion of \emph{Boolean category} introduced in \cite{PredicateTransformer} is equivalent to the following. 
\begin{definition} \label{def:Boolean-category}
A category $\C$ with finite coproducts is \emph{Boolean} if it satisfies the following: 
	\begin{enumerate}
		\item $\C$ admits all pullbacks along coproduct inclusions, and the class of coproduct inclusions is pullback stable;
		\item  Every coproduct inclusion satisfies $(\ExtFirst)$; 
		\item If $Y \xrightarrow{t} X \xleftarrow{t} Y$ is a coproduct diagram, then $X$ is an initial object.
	\end{enumerate}
\end{definition}
All else being equal, the only difference between a Boolean category and a category with finite coproducts in which every coproduct inclusion is coextensive is that coproduct inclusions need not be pullback stable.

\subsection{Commutativity of finite products with coequalisers}
Recall that for a category with binary products, we say that binary products commute with coequalisers in $\C$ if for any two coequaliser diagrams
\[
    \xymatrix{
    C_1 \ar@<-.5ex>[r]_{v_1} \ar@<.5ex>[r]^{u_1} & X_1 \ar[r]^{q_1} & Q_1  & C_2 \ar@<-.5ex>[r]_{v_2} \ar@<.5ex>[r]^{u_2}& X_2 \ar[r]^{q_2} & Q_2,
    }
\]
in $\C$, the diagram
\[
\xymatrix{
& C_1 \times C_2 \ar@<-.5ex>[r]_-{v_1 \times v_2} \ar@<.5ex>[r]^-{u_1 \times u_2}  & X_1 \times X_2 \ar[r]^-{q_1 \times q_2} & Q_1 \times Q_2, \\
}
\]
is a coequaliser diagram.
In \cite{Hoefnagel2019a} the commutativity of finite products with coequalisers was considered, and shown to hold in any coextensive category. We remark here that this property is intimately connected to the topic of Huq-centrality of morphisms (see \cite{Hoefnagel2023}). First, we deal with the dual property. 

\begin{proposition} \label{proposition: sum of equalisers is an equaliser}
Let $\C$ be a category with coproducts and equalisers, where coproduct inclusions are monic, and where every regular monomorphism in $\C$ is extensive. Then equalisers commute with finite coproducts in $\C$, i.e.\ for any two equaliser diagrams
\[
    \xymatrix{
    E_1 \ar[r]^{e_1}& X_1 \ar@<-.5ex>[r]_{f_1} \ar@<.5ex>[r]^{g_1} & Y_1   & E_2 \ar[r]^{e_2}&  X_2 \ar@<-.5ex>[r]_{f_2} \ar@<.5ex>[r]^{g_2}& Y_2 
    }
\]
in $\C$, the diagram 
\[
\xymatrix{ 
E_1 + E_2 \ar[r]^{e_1 + e_2} & X_1 + X_2 \ar@<-.5ex>[r]_-{f_1 + f_2} \ar@<.5ex>[r]^-{g_1 + g_2}  & Y_1 + Y_2  \\ 
}
\]
is an equaliser diagram in $\C$.
\end{proposition}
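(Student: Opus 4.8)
The plan is to construct the genuine equaliser of the summed pair and then recognise it, through extensivity, as $e_1 + e_2$. Since $\C$ has equalisers, I would first form the equaliser $e\colon E \to X_1 + X_2$ of $f_1 + f_2$ and $g_1 + g_2$. Being an equaliser, $e$ is a regular monomorphism, and hence extensive by hypothesis. The coproduct $X_1 \to X_1 + X_2 \leftarrow X_2$ may therefore be pulled back along $e$: condition $(\ExtFirst)$ supplies pullback objects $P_1$ and $P_2$, with projections $p_i\colon P_i \to X_i$ and top edges $\pi_i\colon P_i \to E$ satisfying $e\pi_i = \iota^X_i p_i$, and guarantees moreover that $\pi_1\colon P_1 \to E \leftarrow P_2 \colon \pi_2$ is a coproduct diagram.

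Next I would identify each $p_i$ with the equaliser $e_i$. Writing $\iota^X_i$ and $\iota^Y_i$ for the coproduct injections of $X_1 + X_2$ and $Y_1 + Y_2$, precomposing the defining equation $(f_1 + f_2)e = (g_1 + g_2)e$ with $\pi_i$ and using $(f_1 + f_2)\iota^X_i = \iota^Y_i f_i$ yields $\iota^Y_i f_i p_i = \iota^Y_i g_i p_i$; since coproduct inclusions are monic, $f_i p_i = g_i p_i$, so $p_i$ factors through $e_i$. Conversely, any $t\colon T \to X_i$ with $f_i t = g_i t$ gives $\iota^X_i t$ equalising $f_1 + f_2$ and $g_1 + g_2$ (again because $(f_1+f_2)\iota^X_i = \iota^Y_i f_i$), so $\iota^X_i t$ factors uniquely through $e$ and hence through the $i$-th pullback square; uniqueness of the induced map into $P_i$ follows because $p_i$, being a pullback of the monomorphism $e$, is itself monic. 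Thus $p_i\colon P_i \to X_i$ is the equaliser of $f_i$ and $g_i$, giving an isomorphism $\phi_i\colon E_i \to P_i$ with $p_i \phi_i = e_i$.

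Finally I would transport the coproduct structure across these isomorphisms. The copairing $[\,\pi_1 \phi_1,\ \pi_2 \phi_2\,]\colon E_1 + E_2 \to E$ is an isomorphism, since $\phi_1,\phi_2$ are isomorphisms and $\pi_1,\pi_2$ are coproduct injections, and composing it with $e$ gives $[\,\iota^X_1 p_1 \phi_1,\ \iota^X_2 p_2 \phi_2\,] = [\,\iota^X_1 e_1,\ \iota^X_2 e_2\,] = e_1 + e_2$. Hence $e_1 + e_2$ is the composite of the equaliser $e$ with an isomorphism, and is therefore itself an equaliser of $f_1 + f_2$ and $g_1 + g_2$, which is the desired conclusion.

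The main obstacle is the middle step: not merely showing that each pullback $P_i$ is \emph{an} equaliser of $f_i,g_i$, but checking that the coproduct structure on $E$ furnished by $(\ExtFirst)$ is compatible with the identifications $P_i \cong E_i$, so that the abstract isomorphism $E \cong E_1 + E_2$ genuinely carries $e$ to $e_1 + e_2$ rather than to some unrelated equaliser with the same domain. The monicity of coproduct inclusions is exactly what lets the equalising condition descend from $Y_1 + Y_2$ to each $Y_i$, while the monicity of the regular mono $e$ secures the uniqueness half of the universal property of $p_i$.
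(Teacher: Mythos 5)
Your proof is correct, and its overall strategy coincides with the paper's: both form the equaliser $e\colon E \to X_1+X_2$ of $f_1+f_2$ and $g_1+g_2$, observe that $e$ is a regular monomorphism and hence extensive, and use $(\ExtFirst)$ to decompose $E$ as a coproduct of the pullbacks of $e$ along the two coproduct inclusions. Where you differ is in the key middle step, namely identifying those pullbacks with $E_1$ and $E_2$. The paper runs the comparison in the opposite direction: it induces the maps $E_i \to E$ from the universal property of $e$ and then invokes the \emph{dual} of Lemma~\ref{lemma: common-coequaliser} (this is where monicity of the coproduct inclusions $Y_i \to Y_1+Y_2$ enters, as the dual lemma requires a monomorphism in place of the epimorphism $e$ of that lemma) to conclude that the squares containing $E_i$ are themselves pullbacks, whence $(\ExtFirst)$ makes $E_1 \to E \leftarrow E_2$ a coproduct. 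You instead take the pullbacks $P_i$ supplied by $(\ExtFirst)$ and verify by hand that each $p_i\colon P_i \to X_i$ satisfies the universal property of the equaliser of $(f_i,g_i)$ --- using monicity of $\iota^Y_i$ to descend the equalising condition and monicity of $p_i$ (as a pullback of the mono $e$) for uniqueness --- and then transport the coproduct structure along the isomorphisms $\phi_i$. In effect you have inlined a proof of the dual lemma in this special case: your argument is more self-contained and elementary, at the cost of the extra bookkeeping with $\phi_i$ and the explicit computation that $e\circ[\pi_1\phi_1,\pi_2\phi_2] = e_1+e_2$, a compatibility point which the paper's formulation gets for free by building the squares on $E_1,E_2$ from the outset but which you rightly flag as the place where care is needed.
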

\begin{proof}
Given the two equaliser diagrams above, consider the equaliser $e: E \rightarrow X_1 + X_2$ of $f_1 + f_2$ and $g_1 + g_2$. Then, in the diagram 
\[
\xymatrix{
E_1 \ar@{..>}[r] \ar[d]_{e_1} & E \ar[d]^e & E_2   \ar@{..>}[l] \ar[d]^{e_2} \\ 
X_1 \ar[r] \ar@<-.5ex>[d]_-{f_1} \ar@<.5ex>[d]^-{g_1} & X_1 + X_2 \ar@<-.5ex>[d]_-{f_1 + f_2} \ar@<.5ex>[d]^-{g_1 + g_2} & X_2 \ar[l] \ar@<-.5ex>[d]_-{f_2} \ar@<.5ex>[d]^-{g_2} \\ 
Y_1 \ar[r] & Y_1 + Y_2 & Y_2 \ar[l] 
}
\]
where the dotted arrows are induced by the equaliser $e$, the upper squares are pullbacks by the dual of Lemma~\ref{lemma: common-coequaliser}, and hence the top row is a coproduct. 
\end{proof}

By the proposition above, we note that in a finitely complete category with coequalisers in which every regular epimorphism is coextensive, we have that finite products commute with coequalisers. The proposition below asserts a converse to this fact. 

\begin{proposition}
Let $\C$ be a finitely complete category that admits pushouts of regular epimorphisms along product projections, and suppose that every terminal morphism $X \to 1$ in $\C$ is a regular epimorphism. If finite products commute with coequalisers in $\C$ and every split monomorphism in $\C$ is coextensive, then $\C$ is coextensive. 
\end{proposition}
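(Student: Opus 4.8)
The plan is to invoke the dual of Corollary~\ref{corollary: C extensive iff split epis and inclusions are}, by which $\C$ is coextensive as soon as every split monomorphism and every product projection is coextensive. Split monomorphisms are coextensive by hypothesis, so the whole problem reduces to the product projections. Before attacking these I would extract two standing consequences. Since every terminal morphism is a regular epimorphism and finite products commute with coequalisers, a product projection $p_1\colon X_1\times X_2\to X_1$ is, up to the isomorphism $X_1\times 1\cong X_1$, the product $1_{X_1}\times{!_{X_2}}$ of the identity coequaliser on $X_1$ with the coequaliser ${!_{X_2}}\colon X_2\to 1$ of its kernel pair, and is therefore a regular, hence extremal, epimorphism. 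By Proposition~\ref{proposition: coextensivity of identity = product projections are extremal} this forces every identity morphism to be coextensive, and then Corollary~\ref{corollary: Ext1 are extensive} tells us that a morphism is coextensive precisely when it satisfies $(\ExtFirstop)$. Thus it remains only to verify $(\ExtFirstop)$ for an arbitrary product projection $\pi\colon X\to A$.

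Fix such a $\pi$, with kernel pair $k_1,k_2\colon K\rightrightarrows X$ so that $\pi$ is the coequaliser of this pair, together with a product diagram $X_1\xleftarrow{x_1}X\xrightarrow{x_2}X_2$. As $\pi$ is a regular epimorphism and $x_1,x_2$ are product projections, the hypothesis on pushouts supplies pushouts $A_1,A_2$ of $\pi$ along $x_1,x_2$, with legs $\pi_i\colon X_i\to A_i$ and $y_i\colon A\to A_i$; the goal is to show that $A_1\xleftarrow{y_1}A\xrightarrow{y_2}A_2$ is a product diagram. Because the pushout of a coequaliser along a map is again a coequaliser, each $\pi_i$ is the coequaliser of $x_ik_1,x_ik_2\colon K\rightrightarrows X_i$; in particular $\pi_i$ is a regular epimorphism, say with kernel pair $K_i$, and the squares $y_i\pi=\pi_ix_i$ arrange these data into a $3\times 3$ diagram whose three columns are the coequaliser presentations of $\pi_1,\pi,\pi_2$ and whose middle row is the given product.

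The two legs assemble into a comparison $\langle y_1,y_2\rangle\colon A\to A_1\times A_2$ with $\langle y_1,y_2\rangle\,\pi=\pi_1\times\pi_2$, and I claim this comparison is an isomorphism. Indeed, commutativity of products with coequalisers presents $\pi_1\times\pi_2$ as the coequaliser of the product parallel pair $K_1\times K_2\rightrightarrows X_1\times X_2$, while $\pi$ is the coequaliser of $k_1,k_2\colon K\rightrightarrows X$; hence, as soon as the canonical map $K\to K_1\times K_2$ induced by $x_1,x_2$ is an isomorphism identifying $(k_1,k_2)$ with the product pair, the morphisms $\pi$ and $\pi_1\times\pi_2$ become coequalisers of one and the same pair and the comparison is forced to be invertible. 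This exhibits $A_1\xleftarrow{y_1}A\xrightarrow{y_2}A_2$ as a product diagram and establishes $(\ExtFirstop)$. I expect the isomorphism $K\cong K_1\times K_2$ to be the crux of the argument and its main difficulty: a priori $K_i$ is only the congruence generated by the image of $K$ under $x_i$, so pinning it down is exactly where the remaining hypotheses must be spent. Global support is the decisive ingredient here---it is precisely what fails in $\Set$ and in $\Ab$, where products commute with coequalisers yet these product projections are \emph{not} coextensive---and I would combine it with commutativity of products and coequalisers, applied to the kernel pairs together with the regular epimorphisms $X_i\to 1$, to force the two (possibly skew) decompositions of $X$ to refine compatibly and so yield $K\cong K_1\times K_2$. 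The surrounding verifications, namely that the induced maps compose correctly and that the squares produced are the asserted pushouts, are then routine diagram pasting, with the dual of Lemma~\ref{lemma: common-coequaliser} available as a bookkeeping device if needed.
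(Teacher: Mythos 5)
Your reduction (product projections are regular, hence extremal, epimorphisms; identities are therefore coextensive; so only $(\ExtFirstop)$ for a product projection $\pi$ needs checking), your construction of the pushouts $\pi_i$, and your identification of each $\pi_i$ as the coequaliser of $(x_ik_1,x_ik_2)$ all match the paper's proof, and you have correctly isolated the crux: the comparison $K \to K_1 \times K_2$ must be an isomorphism. But your proposed mechanism for the crux --- ``global support combined with commutativity of products and coequalisers'' --- cannot work, because those two ingredients alone are demonstrably insufficient: they omit the one hypothesis that must be used a \emph{second} time, namely coextensivity of split monomorphisms. The category $\Ab$ witnesses this concretely. It is finitely complete and cocomplete, every terminal morphism $A \to 0$ is a split (hence regular) epimorphism, and finite products commute with coequalisers there (since $\mathrm{im}(h_1 \times h_2) = \mathrm{im}(h_1) \times \mathrm{im}(h_2)$); yet product projections in $\Ab$ are not coextensive. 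For instance $\pi\colon \mathbb{Z}^2 \to \mathbb{Z}$, $(a,b) \mapsto a-b$, is a product projection (complementary to $(a,b)\mapsto b$ onto the diagonal factor), but its pushouts along the two standard projections $p_1,p_2$ are both $0$, so the induced row $0 \leftarrow \mathbb{Z} \rightarrow 0$ is not a product; equivalently, $K = \Eq(\pi)$ is a proper subobject of $\Eq(\pi_1)\times\Eq(\pi_2) = \nabla_{\mathbb{Z}^2}$, so your crux isomorphism fails. Note also that your diagnostic is backwards: $\Ab$, being pointed, \emph{has} global support; what fails in $\Ab$ is precisely the coextensivity of split monomorphisms. (In $\Set$, incidentally, products do not commute with coequalisers, again because of the empty set.)

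The missing idea is that the split-monomorphism hypothesis must be invoked inside the verification of $(\ExtFirstop)$ for $\pi$, and the paper does this through the diagonal: the morphism $\theta\colon X \to K$ into the kernel pair, split by $k_1$, is a split monomorphism and hence coextensive by hypothesis. Condition $(\ExtFirstop)$ for $\theta$ (which in particular guarantees the relevant pushouts exist) yields pushouts of $\theta$ along $x_1$ and $x_2$ whose outer legs form a product diagram $P_1 \xleftarrow{p_1} K \xrightarrow{p_2} P_2$, together with induced pairs $u_1,u_2\colon P_1 \rightrightarrows X_1$ and $v_1,v_2\colon P_2 \rightrightarrows X_2$ satisfying $u_jp_1 = x_1k_j$ and $v_jp_2 = x_2k_j$. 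Since $p_1,p_2$ are product projections of $K$ and hence epimorphisms, Lemma~\ref{lemma: common-coequaliser} applied to the columns shows $\pi_1 = \mathrm{coeq}(u_1,u_2)$ and $\pi_2 = \mathrm{coeq}(v_1,v_2)$; commutativity of products with coequalisers then exhibits $\pi_1\times\pi_2$ as a coequaliser of a pair identified with $(k_1,k_2)$ under $K \cong P_1\times P_2$ and $X \cong X_1 \times X_2$, forcing $\pi \cong \pi_1\times\pi_2$. Observe that this route never needs $P_i$ to be the kernel pair of $\pi_i$ --- which is exactly the identification your sketch gets stuck on --- so once the diagonal trick is in place, the remainder of your argument goes through essentially as you wrote it.
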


\begin{proof}
Suppose that binary products commute with coequalisers in $\C$ and that every split monomorphism is coextensive. Note that regular epimorphisms are stable under binary products in $\C$, so that every product projection $X\times Y \to X$ is a regular epimorphism, since every product projection is a product of a terminal morphism and an identity morphism. It suffices to show that every product projection in $\C$ satisfies $(\ExtFirstop)$ (by Proposition~\ref{corollary: Ext1 are extensive} and the dual of Corrolary~\ref{corollary: C extensive iff split epis and inclusions are}). Let $X_1 \xleftarrow{\pi_1} X \xrightarrow{\pi_2} X_2$
 be any product diagram, and suppose that $q:X \to Y$ is any product projection of $X$. Consider the diagram
\[\xymatrix{
X_1 \ar@/_2pc/[dd]_{1_{X_1}} \ar[d]_{f_1}  & \ar@{->>}[l]_{\pi_1} X  \ar@{->>}[r]^{\pi_2} \ar@{>->}[]+<0ex,-2.2ex>;[d]_-{\theta} & X_2 \ar[d]^{f_2} \ar@/^2pc/[dd]^{1_{X_2}}
\\
P_1 \pol \ar@{.>}@<0.5ex>[d]^-{u_2} \ar@{.>}@<-0.5ex>[d]_-{u_1} & \ar@{->>}[l]^{p_1} K \ar@{->>}[r]_{p_2} \ar@<0.5ex>[d]^-{k_2} \ar@<-0.5ex>[d]_-{k_1} & P_2 \por \ar@{.>}@<0.5ex>[d]^-{v_2} \ar@{.>}@<-0.5ex>[d]_-{v_1} \\
X_1 \ar@{->>}[d]_{q_1} & X \ar@{->>}[l]^-{\pi_1} \ar@{->>}[d]_{q} \ar@{->>}[r]_-{\pi_2}& X_2 \ar@{->>}[d]^{q_2} \\
Y_1 & \ar[l]^-{\alpha_1} Y \ar[r]_-{\alpha_2} & Y_2}
\]
where $(K,k_1,k_2)$ is the kernel pair of $q$, $\theta$ is the diagonal inclusion and the bottom two squares are the pushouts of $q$ along $\pi_1$ and $\pi_2$. Since the morphism $\theta$ is split monomorphism, we may push out $\theta$ along $\pi_1$ and $\pi_2$, producing a product diagram for $K$, as well the dotted arrows making the middle squares reasonably commute. Then, since $p_1$ and $p_2$ are (regular) epimorphisms, it follows by Lemma~\ref{lemma: common-coequaliser} that $q_1$ is a coequaliser of $u_1$ and $u_2$, and $q_2$ is a coequaliser of $v_1$ and $v_2$. Then, since binary products commute with coequalisers, it follows that $q_1 \times q_2$ is a coequaliser of $k_1$ and $k_2$, so that $q$ is isomorphic to $q_1 \times q_2$ and hence the bottom row is a product diagram. 
\end{proof}

\section{Coextensivity of morphisms in regular categories} 
Recall that a morphism $f\:X \to Y$ in a category $\C$ is called a \emph{regular epimorphism} in $\C$ if $f$ is the coequaliser of some parallel pair of morphisms. Recall that  category $\C$ is \emph{regular} \cite{BarrGrilletOsdol1971} if it has finite limits, coequalisers of kernel pairs, and if the pullback of a regular epimorphism along any morphism is again a regular epimorphism. Listed below are some elementary facts about morphisms in a regular category $\C$: 
\begin{itemize}
    \item Every morphism in $\C$ factors as a regular epimorphism followed by a monomorphism. 
    \item Regular epimorphisms in $\C$ are stable under finite products. 
    \item Every extremal epimorphism in $\C$ is a regular epimorphism in $\C$. 
\end{itemize}

Given any object $X$ in a regular category $\C$, consider the preorder of all monomorphisms with codomain $X$. The posetal-reflection of this preorder is $\Sub(X)$ -- the poset of \emph{subobjects} of $X$. For any morphism $f: X \rightarrow Y$ in $\C$ there is an induced Galois connection 
\[
\xymatrix{
\Sub(X) \ar@/^1pc/[r] \ar@{}[r]|{\bot} &  \Sub(Y) \ar@/^1pc/[l]
}
\]
given by direct image and inverse image. This is defined as follows: given a subobject $A$ of $X$ represented by a monomorphism $A_0 \xrightarrow{a} X$, the direct image $f(A)$ of $A$ along $f$ is defined to be the subobject represented by the monomorphism part of a (regular epimorphism, monomorphism)-factorisation of $f   a$. Given a subobject $B$ of $Y$ represented by a monomorphism $B_0 \xrightarrow{b} Y$, we define the inverse image $f^{-1}(B)$ of $B$ to be the subobject of $X$ represented by the monomorphism obtained from pulling back $b$ along $f$. A \emph{relation} $R$ from $X$ to $Y$ is a subobject of $X \times Y$, i.e., an isomorphism class of monomorphisms with codomain $X \times Y$. Given such a relation $R$ represented by a monomorphism $(r_1, r_2)\: R_0 \to X \times Y$, we define the \emph{opposite} relation $R^{\op}$ to be the relation represented by the monomorphism $(r_2, r_1)\: R_0 \to Y \times X$. Given an object $X$ in a category, we write $\Delta_X$ for the \emph{diagonal relation}, that is the relation represented by $(1_X,1_X)\: X \to X \times X$, and we write $\nabla_X$ for the relation on $X$ represented by the identity morphism on $X \times X$. Given a morphism $f\:X \to Y$ in $\C$, the kernel pair of $f$ represents an equivalence relation $\Eq(f)$ --- the kernel relation of $f$. 

Regular categories possess a well-behaved composition of relations, which is defined as follows: given a relation $R$ from $X$ to $Y$ and a relation $S$ from $Y$ to $Z$, and two representatives $(r_1, r_2):R_0 \rightarrow X \times Y$ and $(s_1,s_2):S_0 \rightarrow Y \times Z$ of $R$ and $S$ respectively, form the pullback of $s_1$ along $r_2$ as below
\[
\xymatrix{
P \ar[r]^{p_2} \ar[d]_{p_1} & S_0 \ar[d]^{s_1} \\
R_0 \ar[r]_{r_2} & Y
}
\]
so that $R \circ S$ is defined to be the relation represented by the monomorphism part of any regular-image factorisation of $(r_1 p_1, s_2p_2):P \rightarrow X \times Z$. This composition of relations is then associative by virtue of the category $\C$ being regular. A relation $R$ on an object $X$ is called:
\begin{itemize}
    \item \emph{reflexive} whenever $\Delta_X \leqslant R$;
    \item \emph{symmetric} whenever $R^{\op} \leqslant R$; 
    \item \emph{transitive} whenever $R \circ R \leqslant R$; an
    \item \emph{equivalence} if $R$ is reflexive, symmetric and transitive.
\end{itemize}
\begin{notation} \label{notation: kernel equivalence relation}
Given any morphism $f\:X \to Y$ in any category, the kernel pair $(K, k_1, k_2)$ of $f$ is an equivalence relation, which we will denote by $\Eq(f)$ in what follows. 
\end{notation}
An equivalence relation $E$ on an object $X$ is called \emph{effective} if it is the kernel equivalence relation of some morphism, i.e.\ there is an $f\:X \to Y$ such that $E = \Eq(f)$. 
\begin{definition} \label{definition: Barr-exact category}
A regular category $\C$ is said to be \emph{Barr-exact} if every equivalence relation is effective. 
\end{definition}
    Finally, given a relation $R$ on an object $X$ represented by $(R_0,r_1,r_2)$ and a relation $S$ on an object $Y$ represented by $(S_0,s_1,s_2)$ we define their \emph{product} to be the relation $R \times S$ on  $X \times Y$ to be the relation represented by $(R_0 \times S_0, r_1 \times s_1, r_2 \times s_2)$. Below, we list properties relevant to this section that hold in any regular category, as derived from the calculus of relations. The proofs can be found in \cite{TheartMSc}. 
\begin{itemize}
    \item For any relation $R$ from $X$ to $Y$ in $\C$, we have $\Delta_Y \circ R = R = R \circ \Delta_X$ for any relation $R$ from $X$ to $Y$ (Proposition~44 in \cite{TheartMSc});
    \item $\nabla_X \circ R = \nabla_X = R \circ \nabla_X$, for any reflexive relation $R$ on $X$ (Proposition 45 in \cite{TheartMSc});
    \item $f(R \circ S) \leqslant f(R) \circ f(S)$ for any morphism $f: X \rightarrow Y$ and relations $R$ and $S$ on $X$ (Proposition 88 in \cite{TheartMSc});
    \item A reflexive relation $R$ is transitive if and only if $R = R \circ R$ (Corollary~48 in \cite{TheartMSc});
    \item $(R\circ R') \times (S \circ S') = (R\times S) \circ (R' \times S')$ for any relations $S$ and $R$ such that the composition makes sense (Proposition~40 in \cite{TheartMSc}); 
    \item $f(f^{-1}(R)) = R$ for any relation $R$ on $X$ and any regular epimorphism $f: X \rightarrow Y$ (Proposition 87 in \cite{TheartMSc}); 
    \item  $f^{-1}(f(R)) = \mathrm{Eq}(f) \circ R \circ \mathrm{Eq}(f)$ for any relation $R$ on $X$ where $f$ is a regular epimorphism (Proposition~93 in \cite{TheartMSc});
    \item  $f(f^{-1}(R) \circ f^{-1}(S)) = R \circ S$ for any regular epimorphism $f$ and any relations $R,S$ on $Y$ (Proposition~92 in \cite{TheartMSc}).
\end{itemize}
The proof of the proposition below follows from a routine calculation, which is demonstrated in Proposition 91 of \cite{TheartMSc}.
\begin{proposition} \label{proposition: kernel pair of projections}    In any category with binary coproducts, the kernel pairs of the projections in any product diagram
$X_1 \xleftarrow{\pi_1} X \xrightarrow{\pi_2} X_2$
are given by the relations $\mathrm{Eq}(\pi_1) = \Delta_{X_1} \times \nabla_{X_2}$ and $\mathrm{Eq}(\pi_2) = \nabla_{X_1} \times \Delta_{X_2}$. 
\end{proposition}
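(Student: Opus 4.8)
The plan is to unfold both sides of each claimed equality into explicit jointly-monic spans and to observe that the two spans coincide. Since a relation on $X$ is by definition a subobject of $X \times X$, it suffices, for each projection, to exhibit a single span into $X \times X$ that represents both the kernel pair on the left and the product of relations on the right.

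First I would fix the canonical representatives. Represent $\Delta_{X_1}$ by $(X_1,\,1_{X_1},\,1_{X_1})$ and $\nabla_{X_2}$ by the pair of projections $(X_2 \times X_2,\,\rho_1,\,\rho_2)$ of the square $X_2 \times X_2$ (i.e.\ by $1_{X_2 \times X_2}$, as in the definition of $\nabla$). Feeding these into the definition of the product of relations, $\Delta_{X_1} \times \nabla_{X_2}$ is represented by the span
\[
\xymatrix{X_1 \times X_2 & X_1 \times (X_2 \times X_2) \ar[l]_-{1_{X_1} \times \rho_1} \ar[r]^-{1_{X_1} \times \rho_2} & X_1 \times X_2}.
\]

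Next I would match this with $\mathrm{Eq}(\pi_1)$. As already recorded in Section~\ref{section: coextensivity of product projections}, the projection $\pi_1$ has kernel pair given by exactly the parallel pair $1_{X_1} \times \rho_1,\ 1_{X_1} \times \rho_2 \colon X_1 \times (X_2 \times X_2) \rightrightarrows X_1 \times X_2$; verifying this from scratch is the one-line check that this pair satisfies the universal property of the pullback of $\pi_1$ against itself --- given $f=(f_1,f_2)$ and $g=(g_1,g_2)$ with $f_1=g_1$, the unique factorisation is $(f_1,(f_2,g_2))$. Because a kernel pair is jointly monic, this span is a representative of $\mathrm{Eq}(\pi_1)$, and it is literally the span obtained in the previous step; hence $\mathrm{Eq}(\pi_1) = \Delta_{X_1} \times \nabla_{X_2}$ as subobjects of $X \times X$. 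The identity $\mathrm{Eq}(\pi_2) = \nabla_{X_1} \times \Delta_{X_2}$ follows by the symmetric argument, interchanging $X_1$ and $X_2$.

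I expect no real obstacle: the argument is pure bookkeeping, and the only step carrying any content --- aligning the product-of-relations representative with the kernel pair --- is immediate once both are written in the same form. The single point requiring minor care is that an abstract product diagram $X_1 \xleftarrow{\pi_1} X \xrightarrow{\pi_2} X_2$ need not present $X$ as $X_1 \times X_2$ on the nose; since relations are invariant under the canonical isomorphism $X \cong X_1 \times X_2$, one may replace $X$ by $X_1 \times X_2$ equipped with its projections without loss of generality.
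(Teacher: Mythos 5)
Your proof is correct and is essentially the argument the paper intends: the paper records exactly this explicit kernel pair $X_1 \times (X_2 \times X_2) \rightrightarrows X_1 \times X_2$ of a product projection just before the proposition and defers the remaining ``routine calculation'' --- matching that span with the canonical representative $(X_1 \times (X_2 \times X_2),\, 1_{X_1}\times\rho_1,\, 1_{X_1}\times\rho_2)$ of $\Delta_{X_1}\times\nabla_{X_2}$ --- to the cited thesis. Your handling of the identification $X \cong X_1 \times X_2$ and the joint monicity of the kernel pair is exactly the right bookkeeping, so nothing further is needed.
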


\begin{lemma}
  \label{lemma: equivalence relations under reg epi}
  Let $\C$ be a regular category and $E$  an equivalence relation on $X$. Then for any product diagram
 $X_1 \xleftarrow{\pi_1} X \xrightarrow{\pi_2} X_2$
  where $\pi_1$ and $\pi_2$ are regular epimorphisms, $E = \pi_1(E) \times \pi_2(E)$ implies that $\pi_1(E)$ and $\pi_1(E)$ are equivalence relations.
\end{lemma}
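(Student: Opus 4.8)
The plan is to set $R_1 \colon= \pi_1(E)$ and $R_2 \colon= \pi_2(E)$ and to check separately that each is reflexive, symmetric and transitive (the statement's second ``$\pi_1(E)$'' should read $\pi_2(E)$). Reflexivity and symmetry do not require the hypothesis $E = R_1 \times R_2$, only that $\pi_1,\pi_2$ are regular epimorphisms. For reflexivity I would start from $\Delta_X \leqslant E$ and apply the monotone direct image along $\pi_1$, noting that $\pi_1(\Delta_X) = \Delta_{X_1}$: the representing map of $\pi_1(\Delta_X)$ is the image of $(\pi_1,\pi_1) = \Delta_{X_1}\circ\pi_1$, and since $\pi_1$ is a regular epimorphism and $\Delta_{X_1}$ is a monomorphism, this image is exactly $\Delta_{X_1}$; hence $\Delta_{X_1}\leqslant R_1$. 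For symmetry I would use that direct image commutes with passing to the opposite relation, so that $R_1^{\op} = \pi_1(E^{\op}) \leqslant \pi_1(E) = R_1$. The same two arguments apply verbatim to $R_2$.

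The real content of the lemma is transitivity, and this is where $E = R_1\times R_2$ is used. Since $E$ is reflexive and transitive, the listed characterisation (a reflexive relation is transitive precisely when $R = R\circ R$) gives $E = E\circ E$. Substituting $E = R_1\times R_2$ and applying the product–composition identity $(R\circ R')\times(S\circ S') = (R\times S)\circ(R'\times S')$ yields
\[
(R_1\circ R_1)\times(R_2\circ R_2) = (R_1\times R_2)\circ(R_1\times R_2) = E .
\]
I would then recover the first factor by taking the direct image along $\pi_1$. On one side $\pi_1(E) = R_1$ by definition. On the other side I claim that for any relation $S$ on $X_1$ and any reflexive relation $T$ on $X_2$ one has $\pi_1(S\times T) = S$: writing $(s_1,s_2)\colon S_0 \to X_1\times X_1$ and $(t_1,t_2)\colon T_0 \to X_2\times X_2$ for the representing monomorphisms, the map representing $(\pi_1\times\pi_1)(S\times T)$ is the image of $(s_1,s_2)\circ \mathrm{pr}_{S_0}$, where $\mathrm{pr}_{S_0}\colon S_0\times T_0 \to S_0$ is a regular epimorphism (being the pullback of $T_0\to 1$ along $S_0 \to 1$, and $T_0$ has global support); since $(s_1,s_2)$ is a monomorphism, this image is $S$. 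Taking $S = R_1\circ R_1$ and $T = R_2\circ R_2$ (which is reflexive, lying above the reflexive $R_2$) gives $\pi_1\big((R_1\circ R_1)\times(R_2\circ R_2)\big) = R_1\circ R_1$. Comparing the two evaluations of $\pi_1(E)$ forces $R_1\circ R_1 = R_1$, so $R_1$ is transitive; symmetrically $R_2\circ R_2 = R_2$. Thus $R_1$ and $R_2$ are equivalence relations.

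The step I expect to be the main obstacle is the factor–extraction identity $\pi_1(S\times T) = S$, and more precisely the global-support condition on $T$ that makes $\mathrm{pr}_{S_0}$ a regular epimorphism (in $\Set$ this is the need for $T$ to be inhabited, without which the second factor cannot be cancelled). This is exactly the point at which the hypothesis that the product projections $\pi_1,\pi_2$ are regular epimorphisms is used: $\pi_1\colon X_1\times X_2 \to X_1$ is the pullback of $X_2\to 1$ along $X_1\to 1$, so the projections being regular epimorphisms forces $X_1$ and $X_2$ to have global support (the degenerate case $X\cong 0$ being vacuous, since $X_i$ is a regular-epi image of $X$ via $\pi_i$ and a right factor of the regular epimorphism $X\to 1$). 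Consequently $T_0 \to 1$ factors as the split epimorphism $T_0 \to X_2$ arising from reflexivity of $T$ followed by the regular epimorphism $X_2 \to 1$, so $T_0$ has global support and $\mathrm{pr}_{S_0}$ is a regular epimorphism as needed. Once this support bookkeeping is settled, everything else reduces to the formal relational identities already established.
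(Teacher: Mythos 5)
Your reduction of transitivity to a factor-extraction identity is a genuinely different route from the paper's proof, which never leaves the listed relational calculus: the paper first shows $\Eq(\pi_1)\circ E = E\circ\Eq(\pi_1)$ using $E=\pi_1(E)\times\pi_2(E)$ and $\Eq(\pi_1)=\Delta_{X_1}\times\nabla_{X_2}$, and then computes $\pi_1(E)\circ\pi_1(E)=\pi_1(E)$ via the identities $f(f^{-1}(R)\circ f^{-1}(S))=R\circ S$ and $f^{-1}(f(R))=\Eq(f)\circ R\circ\Eq(f)$. Your reflexivity and symmetry arguments are fine, and the chain $E=E\circ E=(R_1\circ R_1)\times(R_2\circ R_2)$ together with $\pi_1\bigl((R_1\circ R_1)\times(R_2\circ R_2)\bigr)=R_1\circ R_1$ would indeed finish the proof. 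The problem is your justification of that last identity. You claim that the projections being regular epimorphisms forces $X_1$ and $X_2$ to have global support, outside a degenerate case $X\cong 0$. This is false: regular epimorphisms are stable under pullback, but they do not \emph{descend} along pullbacks, which is what your inference from ``$\pi_1$ is a regular epi'' to ``$X_2\to 1$ is a regular epi'' amounts to. Concretely, in the regular category $\Set\times\Set$ take $X_1=(A_1,\emptyset)$ and $X_2=(A_2,\emptyset)$ with $A_1,A_2$ nonempty: both projections of $X=X_1\times X_2=(A_1\times A_2,\emptyset)$ are regular epimorphisms and $X$ is not initial, yet neither $X_i$ has global support, and the lemma is not vacuous for such an $X$. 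So as written, the step on which everything rests is unproved.

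Fortunately the identity you need is true, and the repair is short; the hypothesis on the projections should be used through pullback stability rather than through global support. Factor the projection $\mathrm{pr}_{S_0}\colon S_0\times T_0\to S_0$ as $S_0\times T_0\xrightarrow{1_{S_0}\times t_1}S_0\times X_2\xrightarrow{\mathrm{pr}_1}S_0$. The first map is a split epimorphism, since reflexivity of $T$ splits $t_1$. The second map is the pullback of the regular epimorphism $\pi_1\colon X_1\times X_2\to X_1$ along $s_1\colon S_0\to X_1$, hence a regular epimorphism because $\C$ is regular; as regular epimorphisms in a regular category compose, $\mathrm{pr}_{S_0}$ is a regular epimorphism, and your computation of the image of $(s_1,s_2)\circ\mathrm{pr}_{S_0}$ then gives $\pi_1(S\times T)=S$ with no support hypothesis at all. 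With this substitution your argument is complete and correct. It is worth noting what the two proofs buy: the paper's argument is purely equational in the calculus of relations it has already set up, so it transfers verbatim to any setting where those identities hold; yours isolates a reusable geometric fact (direct image along a regular-epi projection cancels a reflexive factor) at the cost of having to argue about images and pullback stability directly.
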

\begin{proof}
  It is well-known that reflexivity of relations are preserved by any morphism, and symmetry is preserved by regular epimorphisms. Since $\pi_1$ and $\pi_2$ are regular epimorphisms, it suffices to show that $\pi_1(E)$ and $\pi_2(E)$ are transitive. Note that
  \begin{align*}
    \mathrm{Eq}(\pi_1) \circ E &= (\Delta_{X_1}\times \nabla_{X_2}) \circ (\pi_1(E) \times \pi_2(E))  \tag{by Proposition~\ref{proposition: kernel pair of projections}} \\
    &= (\Delta_{X_1} \circ \pi_1(E)) \times (\nabla_{X_2} \circ \pi_2(E)) \\
    &=(\pi_1(E) \circ\Delta_{X_1}) \times (\pi_2(E) \circ \nabla_{X_2}) \\
    &= (\pi_1(E) \times \pi_2(E)) \circ (\Delta_{X_1} \times \nabla_{X_2}) \\
    &= E \circ \mathrm{Eq}(\pi_1).
  \end{align*}
  Hence,
  \begin{align*}
    \pi_1(E) \circ \pi_1(E) &= \pi_1(\pi_1^{-1}(\pi_1(E)) \circ \pi_1^{-1}(\pi_1(E))) \\
    &=\pi_1(\mathrm{Eq}(\pi_1) \circ E \circ \mathrm{Eq}(\pi_1)\circ\mathrm{Eq}(\pi_1) \circ E \circ \mathrm{Eq}(\pi_1)) \\
    &=\pi_1(\mathrm{Eq}(\pi_1) \circ E \circ\mathrm{Eq}(\pi_1) \circ E \circ \mathrm{Eq}(\pi_1))  \\
    &= \pi_1(\mathrm{Eq}(\pi_1) \circ E \circ E  \circ\mathrm{Eq}(\pi_1) \circ \mathrm{Eq}(\pi_1)) \\
    &=\pi_1(\mathrm{Eq}(\pi_1) \circ E \circ \mathrm{Eq}(\pi_1))  \\
    &=\pi_1(\pi_1^{-1}(\pi_1(E)))\\
    &=\pi_1(E).
  \end{align*}
  Therefore, $\pi_1 (E)$ is transitive, and hence an equivalence relation. It follows similarly that $\pi_2(E)$ is an equivalence relation.
\end{proof}

\begin{lemma} 
\label{lemma: final}
Let $\C$ be a regular category in which every split monomorphism is coextensive. Then, given any product diagram $X_1 \xleftarrow{\pi_1} X \xrightarrow{\pi_2} X_2$ and any reflexive relation $R$ on $X$ we have $R = \pi_1(R) \times \pi_2(R)$.  
\end{lemma}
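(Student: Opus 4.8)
The plan is to activate the coextensivity hypothesis through the monomorphism that witnesses reflexivity, and then to read a product decomposition of $R$ directly off the resulting pushouts. Represent $R$ by a monomorphism $\rho = \langle r_1, r_2\rangle\colon R_0 \rightarrowtail X \times X$. Since $R$ is reflexive, the diagonal $\langle 1_X, 1_X\rangle\colon X \to X \times X$ factors through $\rho$, producing $d\colon X \to R_0$ with $r_1 d = r_2 d = 1_X$. Thus $r_1$ retracts $d$, so $d$ is a split monomorphism and is therefore coextensive by hypothesis.

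First I would apply $(\ExtFirstop)$ to $d$ and the given product diagram $X_1 \xleftarrow{\pi_1} X \xrightarrow{\pi_2} X_2$: pushing $d$ out along $\pi_1$ and $\pi_2$ yields pushout squares whose outer edges $t_1\colon R_0 \to P_1$ and $t_2\colon R_0 \to P_2$ form a product diagram, so that $\langle t_1, t_2\rangle\colon R_0 \to P_1 \times P_2$ is an isomorphism. For each $i$ the cocone $\big(\langle \pi_i r_1, \pi_i r_2\rangle\colon R_0 \to X_i \times X_i,\ \langle 1_{X_i}, 1_{X_i}\rangle\colon X_i \to X_i \times X_i\big)$ is compatible with the pushout defining $P_i$ — the required equation $\langle \pi_i r_1, \pi_i r_2\rangle\, d = \langle 1_{X_i}, 1_{X_i}\rangle\, \pi_i$ holds because $r_k d = 1_X$ — and hence induces a morphism $g_i\colon P_i \to X_i \times X_i$ with $g_i t_i = \langle \pi_i r_1, \pi_i r_2\rangle$. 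Under the canonical isomorphism $X \times X \cong (X_1 \times X_1)\times(X_2 \times X_2)$ the map $\rho$ becomes $\langle g_1 t_1, g_2 t_2\rangle = (g_1 \times g_2)\langle t_1, t_2\rangle$, so $\rho$ coincides, up to the isomorphism $\langle t_1, t_2\rangle$, with the product morphism $g_1 \times g_2$.

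The rest is formal. As $\rho$ is monic and $\langle t_1, t_2\rangle$ is invertible, $g_1 \times g_2$ is monic and represents $R$. Writing $g_i = m_i e_i$ for its (regular epimorphism, monomorphism)-factorisation and using that regular epimorphisms are stable under finite products, $m_1 \times m_2$ is the monomorphism part of $g_1 \times g_2$; hence $R = S_1 \times S_2$, where $S_i := \mathrm{im}(g_i)$ is the relation on $X_i$ represented by $m_i$. Since $\pi_i(R)$ is the image of $\langle \pi_i r_1, \pi_i r_2\rangle = g_i t_i$ and $g_i t_i = m_i(e_i t_i)$ factors through $m_i$, we obtain $\pi_i(R) \leqslant S_i$, whence $\pi_1(R) \times \pi_2(R) \leqslant S_1 \times S_2 = R$. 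Conversely, writing $n_i$ for the monomorphism representing $\pi_i(R)$, the map $g_i t_i$ factors through $n_i$, so (under the same isomorphism) $\rho = \langle g_1 t_1, g_2 t_2\rangle$ factors through $n_1 \times n_2$, which represents $\pi_1(R) \times \pi_2(R)$; thus $R \leqslant \pi_1(R) \times \pi_2(R)$. Combining these,
\[
R \;\leqslant\; \pi_1(R) \times \pi_2(R) \;\leqslant\; S_1 \times S_2 \;=\; R ,
\]
so equality holds throughout and $R = \pi_1(R) \times \pi_2(R)$.

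The step I expect to be the main obstacle is the passage in the second paragraph: assembling the four composites $\pi_i r_k$ into the morphisms $g_i$ via the pushouts supplied by $(\ExtFirstop)$, and verifying that, modulo the reassociation isomorphism $X \times X \cong (X_1 \times X_1)\times(X_2 \times X_2)$, the representing monomorphism $\rho$ genuinely decomposes as the product $g_1 \times g_2$ with each $g_i$ valued in $X_i \times X_i$. Once this product decomposition of $\rho$ induced by coextensivity of $d$ is secured, the two inclusions and the final squeeze are routine.
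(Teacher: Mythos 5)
Your proof is correct, and its first half is exactly the paper's construction: the diagonal $d\colon X \to R_0$ is split by $r_1$, hence coextensive, and pushing it out along $\pi_1,\pi_2$ yields a product decomposition $R_0 \cong P_1 \times P_2$ together with the induced comparison maps $g_i \colon P_i \to X_i \times X_i$ satisfying $\rho \cong (g_1 \times g_2)\langle t_1,t_2\rangle$. Where you genuinely diverge is the endgame. The paper identifies $\pi_i(R)$ \emph{directly} with the subobject represented by $g_i$: it invokes Lemma~\ref{lemma: f_1 f_2 mono} (via the decomposition of the monomorphism $\rho$ as a product) to see that each $g_i$ is monic, and Proposition~\ref{proposition: coextensivity of identity = product projections are extremal} (coextensive identities force product projections to be extremal, hence regular, epimorphisms) to see that each $t_i$ is a regular epimorphism, so that $g_i t_i$ is \emph{already} an image factorisation of $\langle \pi_i r_1, \pi_i r_2\rangle$ and $R = \pi_1(R)\times\pi_2(R)$ falls out at once. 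You avoid both of those auxiliary results: you factor each $g_i = m_i e_i$ yourself, use stability of regular epimorphisms under finite products to recognise $m_1 \times m_2$ as the image of the monomorphism $g_1 \times g_2$ (hence $R = S_1 \times S_2$), and then squeeze $R \leqslant \pi_1(R)\times\pi_2(R) \leqslant S_1\times S_2 = R$. The trade-off: the paper's finish is shorter given its earlier machinery and explicitly exhibits $(P_i, g_i)$ as the representative of $\pi_i(R)$, while yours is more self-contained --- it needs only the elementary facts about regular categories listed at the start of the section, with no appeal to epimorphy of $t_i$ or monicity of $g_i$ --- at the cost of a slightly longer formal argument. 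Both are valid proofs of the lemma.
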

\begin{proof}
Suppose that $R$ is a reflexive relation on $X$ represented by $(R_0, r_1, r_2)$ and that $X \xrightarrow{d} R_0$ is the diagonal inclusion. Then we form the diagram 
\[\xymatrix{
X_1 \ar@/_3pc/[dd]_{1_{X_1}} \ar[d]_{d_1}  & \ar@{->>}[l]_{\pi_1} X  \ar@{->>}[r]^{\pi_2} \ar@{>->}[]+<0ex,-2.2ex>;[d]_-{d} & X_2 \ar[d]^{d_2} \ar@/^3pc/[dd]^{1_{X_2}}
\\
R^{(1)}_0 \pol \ar@{.>}@<0.5ex>[d]^-{r^{(1)}_2} \ar@{.>}@<-0.5ex>[d]_-{r^{(1)}_1} & \ar@{->>}[l]^{p_1} R_0 \ar@{->>}[r]_{p_2} \ar@<0.5ex>[d]^-{r_2} \ar@<-0.5ex>[d]_-{r_1} & R^{(2)}_0 \por \ar@{.>}@<0.5ex>[d]^-{r^{(2)}_2} \ar@{.>}@<-0.5ex>[d]_-{r^{(2)}_1} \\
X_1  & X \ar@{->>}[l]^-{\pi_1} \ar@{->>}[r]_-{\pi_2}& X_2  
}
\]
where the top squares are pushouts, and hence the middle row is a product diagram. Moreover, the dotted arrows are induced (reasonably) by the pushouts. By Lemma~\ref{lemma: f_1 f_2 mono} it will follow that $r^{(1)}_1, r^{(1)}_2$ and $r^{(2)}_1, r^{(2)}_2$ are jointly monomorphic. Every identity morphism is coextensive, thus all product projections in $\C$ are extremal (and therefore regular) epimorphisms by Proposition~\ref{proposition: coextensivity of identity = product projections are extremal},  therefore $p_1$ and $p_2$ are regular epimorphisms. From this it follows that $(R_0^{(1)},r^{(1)}_1, r^{(1)}_2)$ represents $\pi_1(R)$ and $(R_0^{(2)}, r^{(2)}_1, r^{(2)}_2)$ represents $\pi_2(R)$ so that $R = \pi_1(R) \times \pi_2(R)$. 
\end{proof}
\begin{remark}
In the context of a regular majority category \cite{Hoefnagel2020maj}, we have that given any product diagram $X_1 \xleftarrow{\pi_1} X \xrightarrow{\pi_2} X_2$ and any reflexive relation $R$ on $X$ we have $R = \pi_1(R) \times \pi_2(R)$. However, it is no true that every split monomorphism in a regular majority category is coextensive. For instance, the variety $\Lat$ of lattices is a regular majority category, which does not have coextensive split monomorphisms (as a result of Proposition~\ref{proposition: Barr-exact coextensive}). 
\end{remark}
\begin{proposition} \label{proposition: Barr-exact coextensive}
    Let $\C$ be a Barr-exact category. Then, every split monomorphism in $\C$ is coextensive if and only if $\C$ is coextensive. 
\end{proposition}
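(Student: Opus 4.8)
The direction $(\Leftarrow)$ is immediate: if $\C$ is coextensive then, by definition, every morphism---and in particular every split monomorphism---is coextensive. All the content lies in $(\Rightarrow)$, so the plan is to assume that every split monomorphism in $\C$ is coextensive and deduce coextensivity of $\C$. Since $\C$ is Barr-exact it is finitely complete, so by the dual of Corollary~\ref{corollary: C extensive iff split epis and inclusions are} it suffices to prove that every product projection is coextensive (split monomorphisms being coextensive by hypothesis). As every identity morphism is an isomorphism and hence a split monomorphism, every identity is coextensive; thus Proposition~\ref{proposition: coextensivity of identity = product projections are extremal} makes every product projection an extremal---hence regular---epimorphism, and Corollary~\ref{corollary: Ext1 are extensive} reduces coextensivity of a product projection to the single condition $(\ExtFirstop)$. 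The whole statement therefore comes down to verifying $(\ExtFirstop)$ for an arbitrary product projection $\pi\colon X \to A$ against an arbitrary product diagram $X_1 \xleftarrow{\pi_1} X \xrightarrow{\pi_2} X_2$.

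Here the relational machinery of the section does the work. Let $E = \Eq(\pi)$ be the kernel pair of $\pi$; being an equivalence relation it is reflexive, so Lemma~\ref{lemma: final} gives $E = \pi_1(E) \times \pi_2(E)$. Since the projections $\pi_1,\pi_2$ are regular epimorphisms, Lemma~\ref{lemma: equivalence relations under reg epi} then shows that $\pi_1(E)$ and $\pi_2(E)$ are themselves equivalence relations. This is precisely where Barr-exactness becomes indispensable: being equivalence relations, $\pi_1(E)$ and $\pi_2(E)$ are effective, so there are regular epimorphisms $q_1\colon X_1 \to A_1$ and $q_2\colon X_2 \to A_2$ with $\Eq(q_1)=\pi_1(E)$ and $\Eq(q_2)=\pi_2(E)$.

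The next step is to identify $q_1,q_2$ as the required pushouts. Writing $(E_0,k_1,k_2)$ for a representative of $E$, we have $\pi = \mathrm{coeq}(k_1,k_2)$, and $\pi_1(E)$ is represented by the monomorphism part $(k_1',k_2')$ of a factorisation $(\pi_1 k_1,\pi_1 k_2) = (k_1',k_2')\,e$ with $e$ a regular epimorphism. Because $e$ is epic, $q_1 = \mathrm{coeq}(k_1',k_2') = \mathrm{coeq}(\pi_1 k_1,\pi_1 k_2)$. Applying Lemma~\ref{lemma: common-coequaliser} to the reasonably commutative diagram with coequaliser top row $E_0 \rightrightarrows X \xrightarrow{\pi} A$, vertical maps $e$ and $\pi_1$, and bottom row $E_0' \rightrightarrows X_1 \xrightarrow{q_1} A_1$ shows the right-hand square is a pushout; hence $q_1$, together with the induced map $\bar\pi_1\colon A \to A_1$, is the pushout of $\pi$ along $\pi_1$, and symmetrically for $q_2$. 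This secures the existence clause of $(\ExtFirstop)$.

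Finally I would show that the two pushouts assemble into a product diagram $A_1 \xleftarrow{\bar\pi_1} A \xrightarrow{\bar\pi_2} A_2$, which I expect to be the main obstacle. Since regular epimorphisms are stable under finite products in a regular category, $q_1\times q_2\colon X \to A_1\times A_2$ is a regular epimorphism, and as kernel pairs are computed componentwise its kernel pair is $\Eq(q_1)\times\Eq(q_2) = \pi_1(E)\times\pi_2(E) = E = \Eq(\pi)$. Two regular epimorphisms with equal kernel pairs are coequalisers of the same pair, so their codomains are canonically isomorphic: there is an isomorphism $\phi\colon A \to A_1\times A_2$ with $\phi\pi = q_1\times q_2$. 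A short chase, using that $\pi$ is epic together with $p_i(q_1\times q_2) = q_i\pi_i = \bar\pi_i\pi$, shows $\phi$ carries $\bar\pi_1,\bar\pi_2$ onto the genuine product projections, so the bottom row is a product diagram and $\pi$ satisfies $(\ExtFirstop)$. Combining this with the hypothesis on split monomorphisms and the dual of Corollary~\ref{corollary: C extensive iff split epis and inclusions are} yields that $\C$ is coextensive.
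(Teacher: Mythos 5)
Your proposal is correct and follows essentially the same route as the paper's own proof: the same reduction via the dual of Corollary~\ref{corollary: C extensive iff split epis and inclusions are}, Proposition~\ref{proposition: coextensivity of identity = product projections are extremal} and Corollary~\ref{corollary: Ext1 are extensive}, then Lemma~\ref{lemma: final}, Lemma~\ref{lemma: equivalence relations under reg epi} and Lemma~\ref{lemma: common-coequaliser}, with Barr-exactness supplying effectiveness of $\pi_1(E)$ and $\pi_2(E)$ and the final identification of the bottom row as a product via two coequalisers of the same kernel pair. The only cosmetic differences are that you verify $(\ExtFirstop)$ for product projections directly (the paper does so for arbitrary regular epimorphisms) and that you spell out the image factorisation and the isomorphism chase slightly more explicitly.
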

\begin{proof}
Suppose that every split monomorphism in $\C$ is coextensive. Note that each product projection is a regular epimorphism by Proposition~\ref{proposition: coextensivity of identity = product projections are extremal}. We show that every regular epimorphism is coextensive, and conclude the result by the dual of Corollary~\ref{corollary: C extensive iff split epis and inclusions are}. Note that we only need to show that every regular epimorphism satisfies $(\ExtFirstop)$ by Proposition~\ref{corollary: Ext1 are extensive}. To this end, let $q:X \to Y$ be any regular epimorphism in $\C$ and let $E=\Eq(q)$ be the equivalence relation represented by the kernel pair $(K,k_1,k_2)$ of $q$. Given any product diagram $\xymatrix{X_1 & X \ar@{->>}[r]_{\pi_2}  \ar@{->>}[l]^{\pi_1} & X_2}$ we consider  the diagram 
\[
\xymatrix{
K_1 \ar@<0.5ex>[d]^-{u_2} \ar@<-0.5ex>[d]_-{u_1} & \ar@{->>}[l]_{p_1} K \ar@{->>}[r]^{p_2} \ar@<0.5ex>[d]^-{k_2} \ar@<-0.5ex>[d]_-{k_1} & K_2  \ar@<0.5ex>[d]^-{v_2} \ar@<-0.5ex>[d]_-{v_1} \\
X_1 \ar@{->>}[d]_{q_1} & X \ar@{->>}[l]^-{\pi_1} \ar@{->>}[d]_{q} \ar@{->>}[r]_-{\pi_2}& X_2 \ar@{->>}[d]^{q_2} \\
Y_1 & \ar[l]^-{\alpha_1} Y \ar[r]_-{\alpha_2} & Y_2}
\]
where $(u_1, u_2)$ and $(v_1,v_2)$ are obtained as representatives of $\pi_1(E)$ and $\pi_2(E)$. By Lemma~\ref{lemma: equivalence relations under reg epi}, it follows that $\pi_1(E)$ and $\pi_2(E)$ are equivalence relations, so that they are effective. Hence, $(K_1,u_1,u_2)$ and $(K_2,v_1,v_2)$ are kernel pairs and therefore admit coequalisers $q_1$ and $q_2$ respectively. Then the morphisms $\alpha_1$ and $\alpha_2$ are induced by $q$ being the coequaliser of $k_1$ and $k_2$. Since $p_1$ and $p_2$ are regular epimorphisms, it follows that the squares are pushouts by Lemma~\ref{lemma: common-coequaliser}. Finally, since $E = \pi_1(E) \times \pi_2(E)$ by Lemma~\ref{lemma: final}, it follows that $(K,k_1,k_2)$ is a kernel pair of $q_1 \times q_2$. Since $q_1 \times q_2$ is a regular epimorphism, it is the coequaliser of $k_1$ and $k_2$. Therefore $q$ and $q_1 \times q_2$ are coequalisers of the same parallel pair, so that the bottom row is a product diagram. 
\end{proof}

\section*{Acknowledgements} Many thanks are due to Dr. F. van Niekerk for many interesting discussions on the topic of extensive morphisms.

\end{document}